\documentclass[a4paper]{article}
\usepackage{amssymb}
\usepackage{amsmath}
\usepackage{mathrsfs}
\usepackage{amsfonts}
\usepackage{color}
\usepackage{vmargin}
\usepackage{amsthm}

\long\def\symbolfootnote[#1]#2{\begingroup%
\def\thefootnote{\fnsymbol{footnote}}\footnote[#1]{#2}\endgroup}

%\addtolength{\topmargin}{-0.3cm} \addtolength{\hoffset}{-1.5cm}
%%\addtolength{\oddsidemargin}{-0.5cm}
%\addtolength{\textwidth}{3cm} \addtolength{\textheight}{2.2cm}  %PCTex

%%%\addtolength{\topmargin}{-2.3cm}
%\addtolength{\hoffset}{-1.5cm} \addtolength{\textwidth}{2.8cm}
%\addtolength{\textheight}{2.2cm}

\setmarginsrb{20mm}{20mm}{20mm}{20mm}{10mm}{10mm}{10mm}{10mm}

\newtheoremstyle{remark}
  {}{}{}{}{\bfseries}{.}{.5em}{{\thmname{#1 }}{\thmnumber{#2}}{\thmnote{ (#3)}}}

\usepackage[english]{babel}
\input epsf

\newtheorem{theo}{Theorem}[section]
\newtheorem{lem}{Lemma}[section]

\newtheorem{prop}{Proposition}[section]
\hyphenation{Theo-rem}
\hyphenation{ge-ne-ral}
\hyphenation{po-si-ti-ve}
\hyphenation{appro-xi-ma-te}
\newtheorem{example}{Example}
\setmarginsrb{20mm}{20mm}{20mm}{20mm}{10mm}{10mm}{10mm}{10mm}

\title{\bf Schauder theory in variable H\"{o}lder spaces}
\author{ Piotr Micha{\l} Bies \& Przemys{\l}aw G\'orka \\
\it\small{Department of Mathematics and Information Sciences,}\\
\it\small{Warsaw University of Technology,}\\
\it\small{Ul. Koszykowa 75, 00-662 Warsaw, Poland.}\\
{\tt pgorka@mini.pw.edu.pl}}

\begin{document}

\maketitle

\begin{abstract}
We study elliptic equations on bounded domain of Euclidean spaces in the variable H\"{o}lder spaces. Interior a priori Schauder estimates are given as well as global ones. 
Moreover, the existence and the uniqueness of solutions to the Dirichlet boundary value problem is proved.
\end{abstract}
\bigskip\noindent
{\bf Keywords}: Variable H\"{o}lder spaces; Schauder estimates; Kellog theorem; Variable exponent spaces; Dirichlet boundary value problem.

\bigskip\noindent
{\bf 2010 Mathematics Subject Classification:} 35J25; 26A16; 47B38.

\date
\section{Introduction}

Let $\Omega$ be a bounded domain in $\mathbb{R}^n$ with sufficiently smooth boundary $\partial \Omega$.  
The purpose of this paper is to investigate the linear elliptic operator $L$ defined as follows
\begin{equation*}
Lu(x):=a^{ij}(x)D_{ij}u(x)+b^i(x)D_iu(x)+c(x)u(x), \quad x\in \Omega,
\end{equation*}
where the coefficients $a^{ij}, b^i, c$ are in the variable H\"{o}lder space $C^{\alpha(\cdot)}(\bar{\Omega})$ 
(notation will be explained in the next section) and where the Einstein summation convention is used. 
We are interested in the study of the following boundary value problem
 \begin{align*}
\left\{\begin{array}{l}
L u=f \quad \text{in} \, \Omega,\\
u=\phi \quad \text{ on} \, \partial\Omega,
\end{array}\right.
\end{align*}
where $f$ and $\phi$ are elements of the variable H\"{o}lder spaces. Assuming some mild condition on the exponent $\alpha$, i.e. 
the so-called log-H\"{o}lder regularity, we show the interior a priori Schauder estimates as well as the global ones for the above 
elliptic boundary value problem. 
Moreover, under some regularity assumptions on the boundary $\partial \Omega$, we prove that the above Dirichlet problem posses a 
unique solution in the variable H\"{o}lder space $C^{2,\alpha(\cdot)}(\bar{\Omega})$. 
  
Linear elliptic operators involving variable coefficients in different function spaces 
have been extensively studied in the mathematical literature. Especially, the literature devoted to the elliptic 
problems in the classical H\"{o}lder spaces is very vast (see \cite{alm, Azzam, baz, ber, Bolley, can, campa, dou, fazio, krylov, krylovPriola, Mazya, naj, pic, sha, vak, wid, zvy} and 
references therein). Our results are natural extensions of the Schauder estimates as well as the Kellog theorems for variable H\"{o}lder spaces. 

The paper is divided into six sections. In Preliminaries we recall and introduce some basic notations and briefly present the theory of variable 
exponent spaces. 
Then, in Section~3 we prove a priori estimates for the Laplace operator. 
 Section~4 is devoted to studying the fully elliptic equation. In Theorem~\ref{sches} we show the interior Schauder estimates and subsequently 
in Theorem \ref{schesend} the global Schauder estimate is shown. In Section 5 we discuss the existence and uniqueness of solution to the Dirichlet 
problem. In the last section we present the interpolation theorems in the variable H\"{o}lder spaces.

\section{Preliminaries}

Let $\Omega\subset \mathbb{R}^n$ be an open set. We let $B(x, r)=\{y \in \mathbb{R}^{n} : |x - y|<r\}$ denotes a
ball centered at a point $x$ with radius $r>0$. We now turn to a presentation of the theory of the variable H\"{o}lder spaces \cite{AlSamko}, \cite{alm}. 
For background on variable exponent function spaces we refer to the monographs by Cruz-Uribe-Fiorenza \cite{Cruz} and Diening--Harjulehto--H\"ast\"o--R\r u\v zi\v cka~\cite{DHHR}. 

A function $\alpha \colon \Omega\to (0,1]$ is called a variable exponent or H\"older exponent, and we denote 
\[
\alpha^+_A:=\sup_{x\in A} \alpha(x),\quad \alpha^-_A:=\inf_{x\in A} \alpha(x),
\quad \alpha^+:=\alpha_\Omega^+ \quad\text{and}\quad \alpha^-:=\alpha_\Omega^-
\]
for $A\subset\Omega$.  If $A=\Omega$ or if the underlying domain is fixed, we will often skip the index.

By $C^{k}(\Omega)$ we denote the set of functions defined on $\Omega$, such that all derivatives up to the order $k$ are bounded and continuous,  
and by $C^k(\bar{\Omega})$ we denote the set of $u \in C^{k}(\Omega)$ such that all derivatives up to the order $k$  can be extended 
continuously to $\bar{\Omega}$. Let $u$ be a function defined on the set $\Omega$ and $\alpha\colon \Omega\to (0,1]$, then we introduce the 
following seminorm
$$
[u]_{\alpha(\cdot),\Omega}=\sup_{\substack{x,y\in \Omega\\ x\neq y}}\frac{|u(x)-u(y)|}{|x-y|^{\alpha(x)}}.
$$
Next, if $u\in C^k(\Omega)$ then we define
$$
[u]_{k,0,\Omega}=\sup_{\substack{x \in \Omega\\|\beta|=k}}|D^{\beta}u(x)|,
$$
\begin{equation}\label{semihold}
[u]_{k,\alpha(\cdot),\Omega}=\sup_{\substack{x,y\in \Omega\\ x\neq y\\|\beta|=k}}\frac{|D^{\beta}u(x)-D^{\beta}u(y)|}{|x-y|^{\alpha(x)}}.
\end{equation}
If $u\in C^k(\Omega)$, then we define the norm as follows
\begin{equation} \label{nor}
 \|u\|_{C^k(\Omega)}=|u|_{k,\Omega}=\sum\limits_{j=0}^k[u]_{j,0,\Omega}.
\end{equation}
Now, we define the spaces $C^{k,\alpha(\cdot)}(\Omega)$ and $C^{k,\alpha(\cdot)}(\bar{\Omega})$, respectively. 
The space $C^{k,\alpha(\cdot)}(\Omega)$ consists of those functions $u$ from $C^k(\Omega)$ such that for each compact subset $D \subset \Omega$, the 
quantity $|u|_{k,D} + [u]_{k,\alpha(\cdot),D}$ is finite. While, the space $C^{k,\alpha(\cdot)}(\bar{\Omega})$ 
consists of these functions from $C^k(\bar{\Omega})$ such that the quantities (\ref{semihold}) and (\ref{nor}) are finite. 
For $u\in C^{k,\alpha(\cdot)}(\bar{\Omega})$ we define the following norm
\begin{equation}\label{normhold}
\|u\|_{C^{k,\alpha(\cdot)}(\bar{\Omega})}=|u|_{k,\Omega}+[u]_{k, \alpha(\cdot),\Omega}.
\end{equation}
The space $C^{k,\alpha(\cdot)}(\bar{\Omega})$ equipped with the above norm is a Banach space.

Subsequently, we introduce the modified norms and seminorms respectively on the space
$C^{k, \alpha(\cdot)}(\bar{\Omega})$. Let $d=\hbox{diam}({\Omega})$, then
$$
\|u\|'_{C^k(\bar{\Omega})}=|u|'_{k,\Omega}=\sum\limits_{j=0}^kd^j|D^ju|_{0,\Omega},
$$
$$
\|u\|'_{C^{k,\alpha(\cdot)}(\bar{\Omega})}=|u|'_{k,\alpha(\cdot),\Omega}=|u|'_{k,\Omega}+\sup_{\substack{x,y\in \Omega\\x\neq y\\|\beta|=k}} d^{k+\alpha(x)}\frac{|D^{\beta}u(x)-D^{\beta}u(y)|}{|x-y|^{\alpha(x)}}.
$$
Let $x\in\Omega$, in many proofs we will frequently use the following notation
$$
[f]_{\alpha(\cdot), x}=\sup_{\substack{y\in \Omega \\y\neq x}}\frac{|f(x)-f(y)|}{|x-y|^{\alpha(x)}}.
$$
In order to get our results, it is necessary to put some restrictions on the variable exponents. 
Now, we introduce the class of sufficiently regular variable exponents. Namely, we shall say that $\alpha : \Omega \rightarrow \mathbb{R}$ is 
log-H\"older continuous if there 
exists a positive constant $M$ such that for each $x, y \in \Omega$ the following inequality holds 
$$
\left|\ln|x-y|\right|\left|\alpha(x)-\alpha(y)\right|\leq M.
$$ 
Next, we introduce the class of log-H\"older continuous exponents
\begin{equation*}
\mathcal{A}^{\log}(\Omega)=\{\alpha\colon \ 0<\alpha^-\leq\alpha^+<1,\ \alpha\textrm{ is log-H\"older continuous in}\ \Omega\}.
\end{equation*}
From now on, we assume that $\alpha$ is log-H\"older continuous exponent. Additionally, for a given $\alpha \in \mathcal{A}^{\log}(\Omega)$, the smallest constant 
for which $\alpha$ is log-H\"older continuous is denoted by $c_{\log}(\alpha)$. Let us mention that in the theory of variable exponent spaces, the 
log-H\"older continuity is a commonly used assumption on the variable exponents (see \cite{Cruz,DHHR}). 

Finally, we recall the notion of the Newtonian potential. 
Let $\Gamma$ be a fundamental solution of the Laplace equation in the Euclidean space. Then, for integrable function $f$ on a domain $D$, the Newtonian potential 
$w$ of the function $f$ is defined as follows
\begin{eqnarray*}
  w(x)=\int_D \Gamma (x-y) f(y) dy.
\end{eqnarray*}

\section{A priori estimates for Poisson's equation}
In this section we study the Poisson's equation
\begin{eqnarray*}
  \Delta u = f
\end{eqnarray*}
in the variable H\"{o}lder spaces. In particular, we show a priori estimates for the above equation. For this purpose, the detailed 
analysis of the Newtonian potential in the variable H\"{o}lder spaces will be needed. Let us start our journey with the following lemma.  
\begin{lem}\label{lem1}
Let $\Omega\subset\mathbb{R}^n$ be an open and bounded set and $w$ be the Newtonian potential of $f$ on $B_2=B(x_0, 2R)\subset \Omega$, where $f\in C^{\alpha(\cdot)}(\bar{B}_2)$. 
Then, $w\in C^{2,\alpha(\cdot)}(\bar{B}_1)$ and the following inequality is satisfied
\begin{equation}
|D^2w|'_{0,\alpha(\cdot), B_1}\leq C|f|'_{0,\alpha(\cdot),B_2},
%|D^2w|_{0,B_1}+R^{\alpha^+_{B_1}}[D^2w]_{\alpha(\cdot),B_1}\leq C\left(|f|_{0,B_2}+R^{\alpha^+_{B_1}}[f]_{\alpha(\cdot),B_2}\right)
\end{equation}
where $B_1=B(x_0, R)$ and $C=C(\textup{diam}(\Omega), n, \alpha^-, \alpha^+, c_{\log}(\alpha))$.
\end{lem}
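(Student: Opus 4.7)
The plan is to adapt the classical Gilbarg--Trudinger estimate for the Newtonian potential (cf.\ their Lemma~4.4) to the variable exponent setting, with log-H\"older continuity supplying a constant-factor conversion between the exponents $\alpha(x)$ and $\alpha(y)$ at nearby points. To begin, I would establish the standard representation formula valid for $x\in B_1$:
\begin{equation*}
D_{ij}w(x)=\int_{B_2} D_{ij}\Gamma(x-z)\bigl(f(z)-f(x)\bigr)\,dz-f(x)\int_{\partial B_2}D_i\Gamma(x-z)\,\nu_j(z)\,dS_z,
\end{equation*}
where integrability of the first term on the right comes from $|f(z)-f(x)|\le [f]_{\alpha(\cdot),x}\,|z-x|^{\alpha(x)}$ combined with $|D_{ij}\Gamma(x-z)|\lesssim |x-z|^{-n}$.

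Next, for the $C^0$ part of $|D^2w|'_{0,\alpha(\cdot),B_1}$, I would split the first integral over $B(x,R)\cup(B_2\setminus B(x,R))$; the inner piece is bounded using the H\"older seminorm of $f$ and gives $R^{\alpha(x)}[f]_{\alpha(\cdot),x}$, while the outer piece and the boundary term are controlled by $|f|_{0,B_2}$ together with $R^{-1}$-type factors from $D\Gamma$. The scaling weights $d^{k+\alpha(x)}$ built into the primed norm absorb these factors cleanly, yielding the pointwise bound on $D^2w$ in terms of $|f|'_{0,\alpha(\cdot),B_2}$.

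For the H\"older seminorm of $D^2 w$, pick distinct $x,\bar x\in B_1$, set $\delta=|x-\bar x|$, and split
\begin{equation*}
D_{ij}w(x)-D_{ij}w(\bar x)=I_1+I_2+I_3+I_4,
\end{equation*}
where $I_1,I_2$ are the integrals of $D_{ij}\Gamma(x-z)(f(z)-f(x))$ and $D_{ij}\Gamma(\bar x-z)(f(z)-f(\bar x))$ over $B:=B(x,2\delta)\cap B_2$, $I_3$ is the integral over $B_2\setminus B$ of $[D_{ij}\Gamma(x-z)-D_{ij}\Gamma(\bar x-z)](f(z)-f(x))$ plus a leftover $(f(x)-f(\bar x))\int_{B_2\setminus B}D_{ij}\Gamma(\bar x-z)\,dz$, and $I_4$ collects the boundary pieces. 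Each of $I_1,I_2$ is handled by the pointwise $\alpha$-H\"older bound and produces $\delta^{\alpha(x)}$ or $\delta^{\alpha(\bar x)}$ times the seminorm; in $I_3$ one uses the classical gradient estimate $|D_{ij}\Gamma(x-z)-D_{ij}\Gamma(\bar x-z)|\lesssim \delta\,|x-z|^{-(n+1)}$ valid for $|x-z|\ge 2\delta$, and then integrates in polar coordinates, the integral over $[2\delta,2R]$ of $r^{\alpha(x)-2}$ producing a factor comparable to $\delta^{\alpha(x)-1}$.

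The key new ingredient---and the step I expect to be the main obstacle---is converting exponents of $\delta$ at the point $\bar x$ into exponents at $x$, so that everything can be divided by $\delta^{\alpha(x)}$ to match the primed seminorm. Here log-H\"older continuity is decisive: for $\delta\le \tfrac12$ one has $|\alpha(x)-\alpha(\bar x)|\,|\log\delta|\le c_{\log}(\alpha)$, hence
\begin{equation*}
\delta^{\alpha(\bar x)}=\delta^{\alpha(x)}\cdot \delta^{\alpha(\bar x)-\alpha(x)}\le e^{c_{\log}(\alpha)}\,\delta^{\alpha(x)},
\end{equation*}
so swapping base points is free up to a constant depending only on $c_{\log}(\alpha)$. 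For $\delta$ comparable to $R$ or $\operatorname{diam}(\Omega)$ the logarithm is bounded and the exchange is again immediate, with the price absorbed by the $d^{2+\alpha(x)}$ weight. After this conversion, summing the estimates for $I_1,\dots,I_4$ and using the definition of $|f|'_{0,\alpha(\cdot),B_2}$ produces the claimed inequality with $C=C(\operatorname{diam}(\Omega),n,\alpha^-,\alpha^+,c_{\log}(\alpha))$. A secondary technical point is making sure the constant does not blow up when $\alpha(x)$ approaches $0$ or $1$, which is why the dependence on $\alpha^\pm$ must be tracked throughout the integrations in $r$.
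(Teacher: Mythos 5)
Your decomposition and the individual kernel estimates follow the paper's proof closely (the paper uses the midpoint ball $B(\tfrac{x+\bar x}{2},\delta)$ and six terms $I_1,\dots,I_6$, but your $B(x,2\delta)$ and four-term grouping is a cosmetic variant of the same Gilbarg--Trudinger template), and the exponent exchange $\delta^{\alpha(\bar x)-\alpha(x)}\le e^{c_{\log}(\alpha)}$ is exactly the device the paper uses. However, there is one step you slide past that the paper treats explicitly, and it is precisely where the adaptation to the primed norm is not automatic. After dividing the oscillation estimate by $\delta^{\alpha(x)}$ and multiplying through by the weight $(\operatorname{diam}B_1)^{\alpha(x)}$ built into $|D^2w|'_{0,\alpha(\cdot),B_1}$, the term coming from $[f]_{\alpha(\cdot),\bar x}$ reads $R^{\alpha(x)}[f]_{\alpha(\cdot),\bar x}\,\delta^{\alpha(\bar x)-\alpha(x)}$, and to compare it with $|f|'_{0,\alpha(\cdot),B_2}$ you must produce $R^{\alpha(\bar x)}[f]_{\alpha(\cdot),\bar x}$. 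So in addition to $\delta^{\alpha(\bar x)-\alpha(x)}$ you must also control $R^{\alpha(x)-\alpha(\bar x)}$, and this factor is \emph{not} controlled by the log-H\"older modulus by itself: the modulus involves $|\ln\delta|$, not $|\ln R|$, and when $R$ is small with $\alpha(x)<\alpha(\bar x)$ the quantity $R^{\alpha(x)-\alpha(\bar x)}$ can be large. The paper closes this by a three-way case split on $R\gtrless 1$ and the sign of $\alpha(x)-\alpha(\bar x)$; in the delicate case ($R<1$, $\alpha(x)<\alpha(\bar x)$) one first uses $\delta<2R$, i.e.\ $R>\delta/2$, to dominate $R^{\alpha(x)-\alpha(\bar x)}$ by a power of $\delta$, and only then invokes log-H\"older.

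Your sentence that ``the price [is] absorbed by the $d^{2+\alpha(x)}$ weight'' does not address this, and is also mis-aimed: the weight in $|D^2w|'_{0,\alpha(\cdot)}$ is $d^{\alpha(x)}$ (the $d^{2+\alpha(x)}$ weight would appear for the full $|u|'_{2,\alpha(\cdot)}$ norm, which is the subject of Theorem~\ref{tw2}, not of this lemma). The missing bound is short and uses only the ingredients you have already introduced ($\delta<2R$, log-H\"older, boundedness of $\operatorname{diam}\Omega$), but it needs to be written out, since it is the only point at which the variable-exponent argument genuinely departs from the constant-exponent one.
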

\begin{proof}
For $x\in B_1$, we have
$$
D_{ij}w(x)=\int_{B_2}D_{ij}\Gamma(x-y)(f(y)-f(x))dy-f(x)\int_{\partial B_2}D_i\Gamma(x-y)\nu_j(y)dS(y),
$$
what yields
\begin{eqnarray*}
&&|D_{ij}w(x)|\leq\frac{|f(x)|}{n\omega_n}R^{1-n}\int_{\partial B_2}dS(y)+\frac{[f]_{\alpha(\cdot), x}}{\omega_n}\int_{B_2}|x-y|^{\alpha(x)-n}dy\\
&\leq&2^{n-1}|f(x)|+\frac{[f]_{\alpha(\cdot), x}}{\omega_n}\int_{B(x, 3R)}|x-y|^{\alpha(x)-n}dy\leq 2^{n-1}|f(x)|+[f]_{\alpha(\cdot), x}(3R)^{\alpha(x)}\frac{n}{\alpha(x)}\\
&\leq& C_1\left(|f(x)|+[f]_{\alpha(\cdot), x}\frac{(3R)^{\alpha(x)}}{\alpha(x)}\right).
\end{eqnarray*}
Thus, we get 
\begin{eqnarray*}
|D_{ij}w|_{0, B_1}\leq C|f|'_{0,\alpha(\cdot),B_2}.
%|D^2w|_{0,B_1}+R^{\alpha^+_{B_1}}[D^2w]_{\alpha(\cdot),B_1}\leq C\left(|f|_{0,B_2}+R^{\alpha^+_{B_1}}[f]_{\alpha(\cdot),B_2}\right)
\end{eqnarray*}

Now, let us fix $\bar{x}\in B_1$ and let us denote $\delta=|x-\bar{x}|$, $\zeta=\frac{1}{2}(x+\bar{x})$. Then, we can write 
$$
D_{ij}w(\bar{x})-D_{ij}w(x)=f(x)I_1+(f(x)-f(\bar{x}))I_2+I_3+I_4+(f(x)-f(\bar{x}))I_5+I_6,
$$
where $I_1,\ I_2,\ I_3,\ I_4,\ I_5,\ I_6$ are given by
\begin{eqnarray*}
    &I_1&=\int_{\partial B_2}(D_i\Gamma(x-y)-D_i\Gamma(\bar{x}-y))\nu_j(y)dS(y),\\
    &I_2&=\int_{\partial B_2}D_i\Gamma(\bar{x}-y)\nu_j(y)dS(y),\\
    &I_3&=\int_{B(\zeta,\delta)\cap B_2}D_{ij}\Gamma(x-y)(f(x)-f(y))dy,\\
    &I_4&=\int_{B(\zeta, \delta)\cap B_2}D_{ij}\Gamma(\bar{x}-y)(f(y)-f(\bar{x}))dy,\\
    &I_5&=\int_{B_2 \setminus B(\zeta,\delta)}D_{ij}\Gamma(x-y)dy,\\
    &I_6&=\int_{B_2 \setminus B(\zeta, \delta)}(D_{ij}\Gamma(x-y)-D_{ij}\Gamma(\bar{x}-y))(f(\bar{x})-f(y))dy.
\end{eqnarray*}
By the Mean Value Theorem we have the inequality
\begin{eqnarray*}
&&|D_i\Gamma(x-y)-D_i\Gamma(\bar{x}-y)|=\left|\int_0^1\frac{d}{dt}D_i\Gamma(tx+(1-t)\bar{x} -y)dt\right|\\
&\leq&\sup_{t\in[0,\ 1]}|x-\bar{x}||DD_i\Gamma(tx+(1-t)\bar{x}-y)|.
\end{eqnarray*}
Hence, we can estimate $I_1$ as follows
\begin{eqnarray*}
&|I_1|&\leq\int_{B_2}\sup_{t\in[0,\ 1]}|x-\bar{x}|\,|DD_i\Gamma(tx+(1-t)\bar{x}-y)|dy\\&\leq&\frac{n}{\omega_n}|x-\bar{x}|\int_{B_2}\sup_{t\in[0,\ 1]}|(tx+(1-t)\bar{x}-y|^{-n}dy\\&\leq&\frac{n}{\omega_n}|x-\bar{x}|R^{-n}\int_{B_2}dy=\frac{n}{\omega_n}|x-\bar{x}|R^{-n}(2R)^{n-1}\omega_nn=\frac{n^22^{n-1}|x-\bar{x}|}{R}\\
&\leq& n^22^{n-\alpha(x)}\left(\frac{|x-\bar{x}|}{R}\right)^{\alpha(x)}.
\end{eqnarray*}
Subsequently, we estimate  $I_2,\ I_3,\ I_4$ and $I_5$ in the following manner  
\begin{eqnarray*} 
|I_2| &\leq& \frac{1}{n\omega_n}\int_{\partial B_2}|\bar{x}-y|^{1-n}dS(y)\leq\frac{1}{n\omega_n}\int_{\partial B_2}R^{1-n}dS(y)=2^{n-1},	\\
%\end{eqnarray*}
%\begin{eqnarray*}
|I_3| &\leq& \int_{B_{\delta}(\zeta )\cap B_2}|D_{ij}\Gamma(x-y)|\,|f(x)-f(y)|dy\leq\frac{1}{\omega_n}\int_{B_{\delta}(\zeta )\cap B_2}|x-y|^{-n+\alpha(x)}\frac{|f(x)-f(y)|}{|x-y|^{\alpha(x)}}dy	\\
&\leq& \frac{1}{\omega_n}[f]_{\alpha(\cdot), x}\int_{B_\frac{3\delta}{2}(x)}|x-y|^{\alpha(x)-n}dy\leq\frac{n}{\alpha(x)}\left(\frac{3\delta}{2}\right)^{\alpha(x)}[f]_{\alpha(\cdot), x},\\
%\end{eqnarray*}
%$$
|I_4|&\leq& \frac{n}{\alpha(x)}\left(\frac{3\delta}{2}\right)^{\alpha(\bar{x})}[f]_{\alpha(\cdot), \bar{x}},
%$$
\end{eqnarray*}
\begin{eqnarray*}
|I_5|&=&\left|\int_{\partial(B_2-B(\zeta,\delta ))}D_i\Gamma(x-y)\nu_j(y)dS(y)\right| \leq \left|\int_{\partial B_2}D_i\Gamma(x-y)\nu_j(y)dS(y)\right|	\\
&+&\left|\int_{\partial B(\zeta,\delta)}D_i\Gamma(x-y)\nu_j(y)dS(y)\right| \leq  2^{n-1}+\frac{1}{n\omega_n}\int_{\partial B(\zeta,\delta )}|x-y|^{1-n}dS(y)	\\
&\leq& 2^{n-1}+\frac{1}{n\omega_n}\left(\frac{\delta}{2}\right)^{1-n}\int_{\partial B(\zeta,\delta)}dS(y)=2^n.
\end{eqnarray*} 
Since $|\bar{x}-y|\leq\frac{3}{2}|\zeta-y|\leq 3|tx+(1-t)\bar{x}-y|$, we get
\begin{eqnarray*}
|I_6|&=&\left|\int_{B_2\setminus B(\zeta,\delta)}\int_0^1\frac{d}{dt}D_{ij}\Gamma(tx+(1-t)\bar{x}-y)dt(f(\bar{x})-f(y))dy\right|		\\
&\leq& \int_{B_2\setminus B(\zeta,\delta)}\sup_{t\in[0,\ 1]}|DD_{ij}\Gamma(tx+(1-t)\bar{x}-y)|\,|x-\bar{x}|\,|f(\bar{x})-f(y)|dy 	\\
&\leq&  c\delta\int_{|y-\zeta|\geq\delta}\sup_{t\in[0,\ 1]}\frac{|f(\bar{x})-f(y)|}{|tx+(1-t)\bar{x}-y|^{n+1}}dy	\\
&\leq& c\delta[f]_{\alpha(\cdot),\bar{x}}\int_{|y-\zeta|\geq\delta}\sup_{t\in[0,\ 1]}\frac{|\bar{x}-y|^{\alpha(\bar{x})}}{|tx+(1-t)\bar{x}-y|^{n+1}}dy	\\
&\leq& c\delta[f]_{\alpha(\cdot),\bar{x}}3^{\alpha(\bar{x})}2^{n+1-\alpha(\bar{x})}\int_{|y-\zeta|\geq\delta}|\zeta-y|^{\alpha(\bar{x})-n-1}dy\leq\frac{cn}{1-\alpha(\bar{x})}\delta^{\alpha(\bar{x})}[f]_{\alpha(\cdot),\bar{x}}3^{\alpha(\bar{x})}2^{n+1-\alpha(\bar{x})},
\end{eqnarray*}
where $c=\frac{n(n+5)}{\omega_n}$.
Finally, we obtain
\begin{eqnarray*}
  |D_{ij}w(\bar{x})&-& D_{ij}w(x)|	\\
  &\leq&  C\left(|f(x)|\frac{\delta^{\alpha(x)}}{R^{\alpha(x)}}+|f(x)-f(\bar{x})|+|f|_{\alpha(\cdot),x}\delta^{\alpha(x)}+|f|_{\alpha(\cdot),\bar{x}}\delta^{\alpha(\bar{x})}\right).
%\\C\left(\frac{|f(x)|}{(2R)^{\alpha(x)}}+\frac{1}{\alpha(x)}\left(\frac{3}{2}\right)^{\alpha(x)}[f]_{\alpha(\cdot),B_2} +[f]_{\alpha(\cdot),B_2}\right)|x-\bar{x}|^{\alpha(x)}\\ +C\left(\frac{3}{2}\right)^{\alpha(\bar{x})}\frac{1}{1-\alpha(\bar{x})}[f]_{\alpha(\cdot),B_2}|x-\bar{x}|^{\alpha(\bar{x})}
\end{eqnarray*}
Now, we divide the above inequality by $\delta^{\alpha(x)}$ and multiply by $R^{\alpha(x)}$:
\begin{align*}
  R^{\alpha(x)}&\frac{|D_{ij}w(\bar{x})-D_{ij}w(x)|}{|x-\bar{x}|^{\alpha(x)}}\\
  &\leq C\left(|f(x)|+R^{\alpha(x)}\frac{|f(x)-f(\bar{x})|}{|x-\bar{x}|^{\alpha(x)}}+R^{\alpha(x)}|f|_{\alpha(\cdot),x}+R^{\alpha(x)-\alpha(\bar{x})}R^{\alpha(\bar{x})}|f|_{\alpha(\cdot),\bar{x}}\delta^{\alpha(\bar{x})-\alpha(x)}\right).
\end{align*}
Since $\alpha$ is $\log$-H\"older continuous, we get that $\delta^{\alpha(\bar{x})-\alpha(x)}$ is bounded. Namely, we have
\begin{align*}
  \delta^{\alpha(\bar{x})-\alpha(x)}=|\bar{x}-x|^{\alpha(\bar{x})-\alpha(x)}\leq e^{\ln|\bar{x}-x|(\alpha(\bar{x})-\alpha(x))}\leq
  e^{\left|\ln|\bar{x}-x|\right||\alpha(\bar{x})-\alpha(x)|}\leq e^{c_{\log}(\alpha)}.
\end{align*}
The term $R^{\alpha(x)-\alpha(\bar{x})}$ is bounded as well. Indeed, 
\begin{itemize}
\item  If $R \geq 1$, then
\begin{align*}
  R^{\alpha(x)-\alpha(\bar{x})}\leq R^{2\alpha^+}\leq d^{2\alpha^+},
\end{align*}
where $d=\hbox{diam}\left(\Omega\right)$,
\item If $R<1$ and $\alpha(x)-\alpha(\bar{x}) \geq 0$, then
\begin{align*}
  R^{\alpha(x)-\alpha(\bar{x})}\leq 1,
\end{align*}
\item If $R<1$ and $\alpha(x)-\alpha(\bar{x}) < 0$, then
\begin{align*}
  R^{\alpha(x)-\alpha(\bar{x})}\leq \frac{1}{2^{\alpha(x)-\alpha(\bar{x})}} \delta^{\alpha(\bar{x})-\alpha(x)}\leq 2^{\alpha^{+}-\alpha^{-}}e^{c_{\log}(\alpha)}.
\end{align*}
\end{itemize}
This finishes the proof of Lemma \ref{lem1}. 
\end{proof}

As a corollary we get the following claim.
\begin{theo}\label{tw2}
Let $\Omega$ be a bounded domain in $\mathbb{R}^n$ and $f\in C^{\alpha(\cdot)}(\Omega)$. If $u\in C^{2}(\Omega)$ 
satisfies  $\Delta u=f$ in $\Omega$, 
then for any concentring balls $B_1=B(x_0, R)$, $B_2=B(x_0, 2R)\subset\subset\Omega$, we have 
\begin{eqnarray*}
|u|'_{2,\alpha(\cdot),B_1}\leq C\left(|u|_{0,B_2}+R^2|f|'_{0,\alpha(\cdot),B_2}\right),
\end{eqnarray*}
where $C=C(\textup{diam}(\Omega), n, \alpha^-, \alpha^+, c_{\log}(\alpha)).$
\end{theo}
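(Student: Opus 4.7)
The standard strategy is the Newtonian-potential decomposition. Let $w$ be the Newtonian potential of $f$ on $B_2$; then $\Delta w = f$ on $B_2$, so $v := u - w$ is harmonic on $B_2$. Writing $u = v + w$, I will estimate the two pieces separately in the primed variable-H\"older norm on $B_1$ and combine via
\[
|u|'_{2,\alpha(\cdot),B_1}\leq |v|'_{2,\alpha(\cdot),B_1}+|w|'_{2,\alpha(\cdot),B_1}.
\]

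For $w$, the top-order contribution $|D^2 w|'_{0,\alpha(\cdot),B_1}\leq C|f|'_{0,\alpha(\cdot),B_2}$ is exactly Lemma~\ref{lem1}. The zeroth- and first-order contributions $|w|_{0,B_1}\leq CR^2|f|_{0,B_2}$ and $|Dw|_{0,B_1}\leq CR|f|_{0,B_2}$ follow from the elementary pointwise bounds $|\Gamma(x-y)|\leq C|x-y|^{2-n}$ and $|D\Gamma(x-y)|\leq C|x-y|^{1-n}$ integrated over $B_2$.

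For the harmonic piece $v$, I would invoke the classical interior derivative estimates for harmonic functions (iterated mean-value representation): for every order $k$ and $0<r<r'$,
\[
|D^kv|_{0,B_r}\leq \frac{C_k}{(r'-r)^k}\,|v|_{0,B_{r'}}.
\]
Choosing suitable radii between $R$ and $2R$ yields $|D^jv|_{0,B_1}\leq CR^{-j}|v|_{0,B_2}$ for $j=0,1,2$ and $|D^3v|_{0,B_{3R/2}}\leq CR^{-3}|v|_{0,B_2}$. The variable-H\"older seminorm of $D^2v$ on $B_1$ then comes from the mean value theorem:
\[
\frac{|D^2v(x)-D^2v(y)|}{|x-y|^{\alpha(x)}}\leq |D^3v|_{0,B_{3R/2}}\,|x-y|^{1-\alpha(x)}\leq CR^{-3}|v|_{0,B_2}\,(2R)^{1-\alpha(x)},
\]
where the factor $|x-y|^{1-\alpha(x)}$ stays bounded because $|x-y|\leq 2R\leq d$ and $\alpha^+<1$. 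The triangle inequality $|v|_{0,B_2}\leq |u|_{0,B_2}+|w|_{0,B_2}\leq |u|_{0,B_2}+CR^2|f|_{0,B_2}$ then closes the estimate for $v$.

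The truly variable-exponent work sits inside Lemma~\ref{lem1}; the present theorem is a routine combination of that lemma with classical interior estimates for harmonic functions. The only real obstacle I anticipate is bookkeeping: keeping track of the powers of $R$ and of $d=\mathrm{diam}(\Omega)$ that appear in the primed norm so that the right-hand side finally assembles into exactly $|u|_{0,B_2}+R^2|f|'_{0,\alpha(\cdot),B_2}$, using $\alpha^+<1$ and log-H\"older continuity (via Lemma~\ref{lem1}) to absorb the variable-exponent factors into the constant $C=C(d,n,\alpha^-,\alpha^+,c_{\log}(\alpha))$.
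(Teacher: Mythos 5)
Your proposal takes essentially the same route as the paper: Newtonian-potential decomposition $u=v+w$, Lemma~\ref{lem1} for the top-order $C^{\alpha(\cdot)}$ seminorm of $D^2w$, direct kernel bounds for $w$ and $Dw$, and iterated interior derivative estimates for the harmonic part $v$ together with the mean value theorem and the factor $|x-y|^{1-\alpha(x)}\leq(2R)^{1-\alpha(x)}$ to control its variable H\"older seminorm.

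One small but genuine gap: the pointwise kernel bound $|\Gamma(x-y)|\leq C|x-y|^{2-n}$ that you invoke for the zeroth-order estimates on $w$ is false when $n=2$, where $\Gamma$ is logarithmic, so $|w|_{0,B_2}\leq CR^2|f|_{0,B_2}$ does not hold with a constant independent of $R$ (for small $R$ the integral picks up a $|\log R|$ factor). This bound is needed to close the triangle inequality $|v|_{0,B_2}\leq|u|_{0,B_2}+|w|_{0,B_2}$. The paper handles this by extending $u(x_1,x_2)$ to a function of three variables $u(x_1,x_2,x_3)=u(x_1,x_2)$ and running the argument in a ball in $\mathbb{R}^3$, where the power-law bound on $\Gamma$ does hold; you would need to add this lifting (or some equivalent fix) to cover $n=2$.
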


\begin{proof} Let us write $u=v+w$, where $v$ is a harmonic function in $B_2$ and $w$ is the Newtonian potential of $f$ on $B_2$. 
By the basic properties of the Newtonian potential we get
\begin{eqnarray*}
|Dw(x)| &=& \left|\int_{B_2}D\Gamma(x-y)f(y)dy\right| \leq |f|_{0,B_2}\frac{1}{\omega_n}\int_{B_2}|x-y|^{1-n}dy\\
&\leq& |f|_{0,B_2}\frac{1}{\omega_n}\int_{B(x,3R)}|x-y|^{1-n}dy\leq CR|f|_{0,B_2}.
\end{eqnarray*}
In view of  Lemma \ref{lem1}, we conclude 
\begin{equation}\label{tw2in1}
R|Dw|_{0,B_1}+R^2\left|D^2w\right|'_{0,\alpha(\cdot),B_1}\leq CR^2|f|'_{0,\alpha(\cdot),B_2}.
\end{equation}
Subsequently, 
\begin{eqnarray*}
R^{\alpha(x)}\left|D^2v(x)-D^2v(y)\right| &=& R^{\alpha(x)}\left|\int_0^1\frac{d}{dt}D^2v((1-t)x+ty)dt\right| \leq R^{\alpha(x)}\left|D^3v\right|_{0,B_1}|x-y| 	\\
 &\leq&  C R^{\alpha(x)-3}|v|_{0,B_2}|x-y|,
\end{eqnarray*}
where the last inequality is obtained from estimation for harmonic functions 
$$\sup\limits_{B_1}\left|D^3v\right| \leq \left(\frac{3n}{R}\right)^3\sup\limits_{B_2}|v|. $$
Therefore, we have
\begin{eqnarray*}
R^{\alpha(x)}\frac{\left|D^2v(x)-D^2v(y)\right|}{|x-y|^{\alpha(x)}} \leq C R^{\alpha(x)-3}|v|_{0,B_2}|x-y|^{1-\alpha(x)}\leq C R^{-2}|v|_{0,B_2},
\end{eqnarray*}
and
\begin{equation*}
R|Dv|_{0,B_1}+R^2\left|D^2v\right|'_{0,\alpha(\cdot),B_1} \leq C|v|_{0,B_2}\leq C\left(|u|_{0,B_2}+R^2|f|_{0,B_2}\right)
\end{equation*}
where the last inequality is a consequence of the identity $v=u-w$ and  the inequality $|w|_{0,B_2}\leq CR^2|f|_{0,B_2}$ when $n>2$. 
For $n=2$, we can write $u(x_1,x_2,x_3)=u(x_1,x_2)$ and consider $u$ as a~solution of the Poisson's equation in a ball in $\mathbb{R}^3$. 
This completes the proof of the theorem.
\end{proof}
In order to develop the theory, we need to introduce some notations. For $x, y \in\Omega$ let us denote the following distances to the boundary, $d_x= \hbox{dist}(x,\partial\Omega)$ and $d_{x,y}=\min\{d_x, d_y\}$. Now, we define useful norms and seminorms:
\begin{eqnarray*}
[u]_{k,0,\Omega}^* =[u]_{k,\Omega}^*=\sup_{\substack{x\in\Omega\\|\beta|=k}}d_x^k|D^{\beta} u(x)|, \quad [u]^{*}_{k,\alpha(\cdot),\Omega}=\sup_{\substack{x,y\in\Omega\\|\beta|=k}}d^{k+\alpha(x)}_{x,y}\frac{|D^{\beta}u(x)-D^{\beta}u(y)|}{|x-y|^{\alpha(x)}},\\
|u|^*_{k,\Omega}=\sum\limits_{j=0}^k[u]_{j,\Omega}^*,\quad
|u|_{k,\alpha(\cdot),\Omega}^*=|u|_{k,\Omega}^*+[u]_{k,\alpha(\cdot)}^*,\\
\end{eqnarray*}
\begin{eqnarray*}
[u]_{k,\Omega}^{(s)}=[u]_{k,0,\Omega}^{(s)}=\sup_{\substack{x\in\Omega \\ |\beta|=k}}d_x^{k+s}|D^{\beta}u(x)|,\quad
[u]_{k,\alpha(\cdot),\Omega}^{(s)}=\sup_{\substack{x,y\in\Omega\\ |\beta|=k}}d^{k+\alpha(x)+s}_{x,y}\frac{|D^{\beta}u(x)-D^{\beta}u(y)|}{|x-y|^{\alpha(x)}},\\
|u|^{(s)}_{k,\Omega}=\sum\limits_{j=0}^k[u]^{(s)}_{j,\Omega}, \quad |f|^{(s)}_{k,\alpha(\cdot),\Omega}=|f|_{k,\Omega}^{(s)}+[f]^{(s)}_{k,\alpha(\cdot),\Omega}.
\end{eqnarray*}

\begin{theo}\label{twlap}
Let $u\in C^2(\Omega)$, $f\in C^{\alpha(\cdot)}(\bar{\Omega})$ satisfy $\Delta u=f$ in an open bounded set $\Omega\subset\mathbb{R}^n$. 
Then, the following inequality
\begin{eqnarray*}
|u|^{*}_{2,\alpha(\cdot),\Omega}\leq C\left(|u|_{0,\Omega}+|f|^{(2)}_{0,\alpha(\cdot),\Omega}\right)
\end{eqnarray*}
is satisfied, where $C=C(\textup{diam}(\Omega), n, \alpha^-, \alpha^+, c_{\log}(\alpha))$.
\end{theo}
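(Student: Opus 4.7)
The strategy is local-to-global: localize by covering $\Omega$ with balls whose radii scale as the distance to $\partial\Omega$, apply the interior estimate on concentric balls from Theorem~\ref{tw2} on each, and then assemble the pointwise information into the weighted norm $|\cdot|^*_{2,\alpha(\cdot),\Omega}$ by taking suprema over the base point.

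More concretely, I would fix $x_0\in\Omega$ and set $R=d_{x_0}/3$, $B_1=B(x_0,R)$, $B_2=B(x_0,2R)$; then $B_2\subset\subset\Omega$, and on $\overline{B_2}$ the distances $d_x$ and $d_{x,y}$ are all comparable to $R$, with constants depending only on $n$. This comparison converts the right-hand side of Theorem~\ref{tw2}, namely $|u|_{0,B_2}+R^2|f|'_{0,\alpha(\cdot),B_2}$, into a quantity dominated by $|u|_{0,\Omega}+|f|^{(2)}_{0,\alpha(\cdot),\Omega}$ (since $R^2|f(x)|\lesssim d_x^2|f(x)|$ and similarly for the Hölder part). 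On the left, the same comparison yields
\[
d_{x_0}^k|D^ku(x_0)|\le C R^k|D^ku|_{0,B_1}\le C|u|'_{2,\alpha(\cdot),B_1}
\]
for $k=0,1,2$. Taking the supremum over $x_0\in\Omega$ then produces the pointwise (non-Hölder) part of the bound, that is, $|u|^*_{2,\Omega}\le C(|u|_{0,\Omega}+|f|^{(2)}_{0,\alpha(\cdot),\Omega})$.

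For the seminorm $[u]^*_{2,\alpha(\cdot),\Omega}$ I would take $x,y\in\Omega$ with (without loss of generality) $d_x\le d_y$, so $d_{x,y}=d_x$, and split into two cases. If $|x-y|<d_x/6$, then $y\in B_1$ for the ball centered at $x_0=x$, and the weight $(2R)^{2+\alpha(x)}$ appearing inside $|u|'_{2,\alpha(\cdot),B_1}$ is comparable to $d_{x,y}^{2+\alpha(x)}$, so the Hölder part of that modified norm controls $d_{x,y}^{2+\alpha(x)}|D^2u(x)-D^2u(y)|/|x-y|^{\alpha(x)}$ directly. If $|x-y|\ge d_x/6$, then $|x-y|^{-\alpha(x)}\le (6/d_x)^{\alpha(x)}$, and the triangle inequality for $D^2u(x)-D^2u(y)$ reduces the task to bounding $d_x^2|D^2u(x)|+d_y^2|D^2u(y)|$, which is already controlled by the $|u|^*_{2,\Omega}$ estimate obtained in the previous step (using $d_x\le d_y$).

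The only delicate point is the bookkeeping of constants: one has to check that $d_x^{\alpha(x)}$, $R^{\alpha(x)}$ and $d_{x,y}^{\alpha(x)}$ are mutually interchangeable up to factors depending only on $\alpha^\pm$ and $n$, and that the choice $R=d_{x_0}/3$ really does place $\overline{B_2}$ inside $\Omega$ so that Theorem~\ref{tw2} applies. The log-Hölder regularity of $\alpha$ enters only implicitly through the invocation of Theorem~\ref{tw2}; no further use of it is needed at this step.
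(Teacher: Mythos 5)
Your proposal is correct and follows essentially the same argument as the paper's proof: both fix $x\in\Omega$, set $R=d_x/3$ so that $B_2=B(x,2R)\subset\subset\Omega$, invoke Theorem~\ref{tw2} on these concentric balls, and then split the weighted H\"older seminorm into the near case ($y$ in the inner ball, handled by the scaled seminorm from Theorem~\ref{tw2}) and the far case (handled by $|x-y|^{-\alpha(x)}\lesssim R^{-\alpha(x)}$ together with the already-established bound on $[u]^*_{2,\Omega}$). Your cutoff $|x-y|<d_x/6$ versus the paper's $y\in B_1$ (i.e.\ $|x-y|<d_x/3$) and your extra remark that log-H\"older regularity is used only inside Theorem~\ref{tw2} are cosmetic differences; the proof is the same.
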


\begin{proof}
Let $x\in\Omega$ and $3R=d_x$. Then, by Theorem \ref{tw2}, we get
\begin{eqnarray*}
d_x|Du(x)|+d^2_x\left|D^2u(x)\right| &\leq& 3R|Du|_{0,B_1}+(3R)^2\left|D^2u\right|_{0,B_1} \leq C\left(|u|_{0,B_2}+R^2|f|'_{0,\alpha(\cdot),B_2}\right)	\\
  &\leq& C\left(|u|_{0,\Omega}+|f|^{(2)}_{0,\alpha(\cdot),\Omega}\right).
\end{eqnarray*}
Next, let $x,y\in\Omega$ and $d_x\leq d_y$.
For $y\in B_1$ we have 
\begin{eqnarray*}
d_{x,y}^{2+\alpha(x)}\frac{\left|D^2u(x)-D^2u(y)\right|}{|x-y|^{\alpha(x)}} = (3R)^{2+\alpha(x)}\frac{\left|D^2u(x)-D^2u(y)\right|}{|x-y|^{\alpha(x)}} \leq CR^2\left[D^2u\right]'_{\alpha(\cdot),B_1}.
\end{eqnarray*}
When $y\notin B_1$, we obtain
\begin{eqnarray*}
d_{x,y}^{2+\alpha(x)}\frac{\left|D^2u(x)-D^2u(y)\right|}{|x-y|^{\alpha(x)}} \leq \frac{(3R)^{\alpha(x)+2}}{R^{\alpha(x)}}\left(\left|D^2u(x)\right|+\left|D^2(y)\right|\right).
\end{eqnarray*}
Finally, we have
\begin{eqnarray*}
d_{x,y}^{2+\alpha(x)}&&\hspace{-8mm} \frac{\left|D^2u(x)-D^2u(y)\right|}{|x-y|^{\alpha(x)}} \leq CR^2\left(\left|D^2u(x)\right|+\left|D^2(y)\right|+\left[D^2u\right]'_{\alpha(\cdot),B_1}\right)	\\
&\leq& C\left(|u|'_{2,\alpha(\cdot),B_1} +[u]^*_{2,\Omega}\right)\leq C\left(|u|_{0,B_2}+R^2|f|'_{0,\alpha(\cdot),B_2}+[u]^*_{2,\Omega}\right)	\\
&\leq& C\left(|u|_{0,\Omega}+|f|^{(2)}_{0,\alpha(\cdot),\Omega}\right).
\end{eqnarray*}
Thus, the proof is finished.
\end{proof}

For simplicity of notation, we write
$B^+_2=B_2\cap\mathbb{R}^n_+$ and $B^+_1=B_1\cap\mathbb{R}^n_+$, $T=\{x_n=0\}$. Furthermore, we assume that the center of the ball $B$, $x_0\in\bar{\mathbb{R}}^n_+$.
\begin{lem}\label{lembrz}
Let $f\in C^{\alpha(\cdot)}\left(\bar{B}_2^+\right)$ and $w$ be the Newtonian potential of $f$ in $B_2^+$ and let us assume that $B_2^+\subset\Omega$, 
where $\Omega\subset\mathbb{R}^n$ is an open and bounded set. Then, $w\in C^{2,\alpha(\cdot)}\left(\bar{B}_1^+\right)$ 
and the following inequality is satisfied
\begin{equation*}
\left|D^2w\right|'_{0,\alpha(\cdot),B_1^+}\leq C|f|'_{0,\alpha(\cdot),B_2^+},
\end{equation*}
where $C=C(\textup{diam}(\Omega), n, \alpha^-, \alpha^+, c_{\log}(\alpha))$.
\end{lem}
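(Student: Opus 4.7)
The plan is to mimic the proof of Lemma \ref{lem1}, with the half-ball $B_2^+$ playing the role of $B_2$ throughout. For $x$ in the interior of $B_2^+$, the representation
\[
D_{ij} w(x) = \int_{B_2^+} D_{ij}\Gamma(x-y)(f(y)-f(x))\,dy - f(x)\int_{\partial B_2^+} D_i\Gamma(x-y)\nu_j(y)\,dS(y)
\]
still holds, with $\partial B_2^+$ now consisting of a spherical cap $\Sigma=\partial B_2\cap\{y_n\geq 0\}$ and the flat disk $T\cap\bar B_2$ on which the outward normal is $-e_n$. The volume integral is bounded as in Lemma \ref{lem1} by enlargement to $B(x,3R)\supset B_2^+$; the boundary integral over $\Sigma$ is bounded by $2^{n-1}$ exactly as before.

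For the H\"older seminorm I would fix $x,\bar x\in B_1^+$, set $\delta=|x-\bar x|$, $\zeta=\tfrac12(x+\bar x)$, and decompose $D_{ij}w(\bar x)-D_{ij}w(x)$ into the same six pieces $f(x)I_1+(f(x)-f(\bar x))I_2+I_3+I_4+(f(x)-f(\bar x))I_5+I_6$ as in the proof of Lemma \ref{lem1}, with every instance of $B_2$ replaced by $B_2^+$. The volume contributions $I_3,I_4,I_6$ only involve enlargements to $B(x,3\delta/2)$ or complements $\{|y-\zeta|\geq\delta\}$, so restriction to the half-ball does not weaken the bounds; the estimates of Lemma \ref{lem1} carry over verbatim. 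The spherical parts of the boundary terms $I_1,I_2,I_5$ are handled just as before. After summing, dividing by $\delta^{\alpha(x)}$, multiplying by $R^{\alpha(x)}$, and absorbing $\delta^{\alpha(\bar x)-\alpha(x)}$ and $R^{\alpha(x)-\alpha(\bar x)}$ via log-H\"older continuity of $\alpha$ exactly as at the end of the proof of Lemma \ref{lem1}, one obtains the desired estimate.

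The main obstacle is the new flat-boundary contributions from $T\cap B_2$ inside $I_1, I_2, I_5$. Since the outward normal on $T$ is $-e_n$, only the component $j=n$ contributes, and the analysis splits by whether $i=n$ or $i\neq n$. For $i=n$ the kernel $D_n\Gamma(x-y)=x_n/(n\omega_n|x-y|^n)$ carries an extra factor of $x_n$, and the substitution $u'=(y'-x')/x_n$ in polar coordinates on $T$ shows that $\int_{T\cap B_2}D_n\Gamma(x-y)\,dS(y)$ is bounded uniformly in $x_n\geq 0$. For $i\neq n$ the kernel $(x_i-y_i)/(n\omega_n|x-y|^n)$ is antisymmetric in $x_i-y_i$, so after translating $u'=y'-x'$ the integral over the symmetric subdisk centered at $x'$ vanishes and only an annular remainder of width comparable to $R$ contributes, yielding a uniform bound. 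The difference $\int_{T\cap B_2}(D_i\Gamma(x-y)-D_i\Gamma(\bar x-y))\,dS$ entering $I_1$ is treated by a parallel cancellation argument on the symmetric subdisk combined with a Mean Value Theorem bound on the annular remainder. Once these uniform flat-boundary estimates are in hand, the rest of the argument is a direct transcription of the proof of Lemma \ref{lem1}.
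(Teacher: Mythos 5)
Your proposal takes a genuinely different route from the paper's. The paper avoids estimating the flat-boundary contributions altogether: using the symmetry $D_{ij}w = D_{ji}w$ it selects, whenever $i \neq n$ or $j \neq n$, the representation of $D_{ij}w$ in which the boundary term carries $\nu_j$ with $j \neq n$, so that $\nu_j \equiv 0$ on $T$ and the flat part of $\partial B_2^+$ drops out identically; the remaining bulk and spherical terms are then bounded exactly as in Lemma \ref{lem1}. The one outstanding case $i = j = n$ is handled by writing $w_{nn} = \Delta w - \sum_{i < n} w_{ii} = f - \sum_{i < n} w_{ii}$ and inheriting the already-proved estimates. Your approach instead estimates the flat-boundary integrals head-on (Poisson-kernel boundedness for $i = n$, odd-symmetry cancellation for $i < n$). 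This can be made to work, but it carries real extra weight that your sketch glosses over: the difference $\int_{T \cap B_2}\bigl(D_i\Gamma(x-y) - D_i\Gamma(\bar x - y)\bigr)\,dS(y)$ inside $I_1$ must be shown to be $O\bigl((\delta/R)^{\alpha(x)}\bigr)$, not merely $O(1)$, and "a parallel cancellation argument plus a Mean Value Theorem bound on the annular remainder" does not immediately deliver this because the symmetric subdisks about $x'$ and $\bar x'$ are different sets; one actually needs to split into the common annulus (where MVT applies with $|x-y|, |\bar x - y| \gtrsim R$) and two crescents of measure $O(\delta R^{n-2})$ on which the kernel is $O(R^{1-n})$. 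Also, the annulus does not have "width comparable to $R$" in general, and when $(x_0)_n$ is close to $2R$ the symmetric subdisk need not fit in $T\cap B_2$ at all (though in that regime $x_n$ is bounded below and no cancellation is needed). In short, your route is correct in outline but materially more computational; the paper's use of the $i \leftrightarrow j$ symmetry plus the equation $\Delta w = f$ eliminates the flat-boundary analysis entirely and is considerably shorter.
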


\begin{proof}
We can assume that $B_2$ intersects $T$. Moreover, we assume that $i\neq n$ or $j\neq n$, then  
\begin{equation*}
\int_{\partial B_2^+ \cap T}D_j\Gamma(x-y)\nu_i(y)dS(y) = \int_{\partial B_2^+ \cap T}D_i\Gamma(x-y)\nu_j(y)dS(y)
\end{equation*}
vanishes, since $\nu_i=0$ or $\nu_j=0$. We shall prove similar estimates as in the proof of Lemma~\ref{lem1}.
For $x\in B^+_1$, we have
\begin{equation*}
D_{ij}w(x) = \int_{B^+_2}D_{ij}\Gamma(x-y)(f(y)-f(x))dy-f(x)\int_{\partial B^+_2}D_i\Gamma(x-y)\nu_j(y)dS(y).
\end{equation*}
Thus, we get
\begin{eqnarray*}
|D_{ij}w(x)|&\leq&\frac{|f(x)|}{n\omega_n}R^{1-n}\int_{\partial B^+_2 \setminus T}dS(y)+\frac{[f]_{\alpha(\cdot), x}}{\omega_n}\int_{B^+_2}|x-y|^{\alpha(x)-n}dy	\\
&\leq&2^{n-1}|f(x)|+\frac{[f]_{\alpha(\cdot), x}}{\omega_n}\int_{B(x, 3R)}|x-y|^{\alpha(x)-n}dy \\
&\leq& 2^{n-1}|f(x)|+[f]_{\alpha(\cdot), x}(3R)^{\alpha(x)}\frac{n}{\alpha(x)}	%\\
\leq C_1\left(|f(x)|+[f]_{\alpha(\cdot), x}\frac{(3R)^{\alpha(x)}}{\alpha(x)}\right).
\end{eqnarray*}
Now, let us fix $\bar{x}\in B_1$ and let us denote $\delta=|x-\bar{x}|$, $\zeta=\frac{1}{2}(x+\bar{x})$. Then, we can write 
$$
D_{ij}w(\bar{x})-D_{ij}w(x)=f(x)J_1+(f(x)-f(\bar{x}))J_2+J_3+J_4+(f(x)-f(\bar{x}))J_5+J_6,
$$
where $J_1,\ J_2,\ J_3,\ J_4,\ J_5,\ J_6$ have the same form like $I_1,...,I_6$ in the proof of Lemma \ref{lem1} with $B_2$ replaced by $B^+_2$. Moreover, each of 
$J_1, ...,  J_6$ can be estimated in similar manner as in the proof of Lemma \ref{lem1}. When $i=j=n$, we use the fact that $u$ satisfies the Poisson's equation, so
\begin{align*}
u_{nn}=\Delta u-\sum_{i=1}^{n-1}u_{ii}=f-\sum_{i=1}^{n-1}u_{ii}.
\end{align*}
Hence, by the previous inequality we can easily estimate the term $u_{nn}$.
\end{proof}
\begin{theo}\label{analtw2}
Let $\Omega\subset\mathbb{R}^n$ be an open and bounded set and ${B}_2^+\subset\Omega$. 
If $u\in C^2(B_2^+)\cap C^0(\bar{B}^+_2)$, $f\in C^{\alpha(\cdot)}(\bar{B}^+_2)$, 
satisfy $\Delta u=f$ in $B^+_2$ and $u=0$ on $T$, then $u\in C^{2,\alpha(\cdot)}(\bar{B}^+_1)$. 
Moreover, the following estimate
\begin{eqnarray*}
|u|'_{2, \alpha(\cdot), B_1^+}\leq C\left(|u|_{0, B_2^+}+R^2|f|'_{0,\alpha(\cdot),B^+_2}\right)
\end{eqnarray*}
holds, where $C=C(\textup{diam}(\Omega), n, \alpha^-, \alpha^+, c_{\log}(\alpha))$.
\end{theo}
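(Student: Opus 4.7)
The approach parallels the interior argument of Theorem \ref{tw2}, with Lemma \ref{lembrz} playing the role of Lemma \ref{lem1} and an additional reflection step needed to handle the flat boundary $T$.

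First, I would decompose $u=v+w$, where $w$ is the Newtonian potential of $f$ on $B_2^+$ and $v=u-w$ is harmonic in $B_2^+$. Lemma \ref{lembrz} directly gives
\[
\bigl|D^2 w\bigr|'_{0,\alpha(\cdot),B_1^+}\le C|f|'_{0,\alpha(\cdot),B_2^+},
\]
and the standard pointwise bound $|Dw|_{0,B_1^+}\le CR|f|_{0,B_2^+}$, obtained by integrating $|D\Gamma(x-y)|$ over $B(x,3R)$, yields the half-ball analog of inequality (\ref{tw2in1}).

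Next, I would estimate the harmonic remainder $v$ up to $T$. Since $u|_T=0$, one has $v=-w$ on $T$; thus $v$ does not vanish there and direct odd reflection is unavailable. To remedy this, I replace $w$ by the half-space Green-function potential
\[
W(x)=\int_{B_2^+}\bigl[\Gamma(x-y)-\Gamma(x-y^*)\bigr]f(y)\,dy,\qquad y^*=(y',-y_n),
\]
which vanishes identically on $T$. Using the symmetry $\Gamma(x-y^*)=\Gamma(x^*-y)$ one rewrites $W(x)=w(x)-w(x^*)$, and the Hölder estimate of Lemma \ref{lembrz} transfers to $W$ after accounting for the parity of $\partial_n$ under the reflection. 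The modified remainder $V=u-W$ is then harmonic in $B_2^+$ with $V|_T=0$, so odd reflection extends it to a harmonic function $\tilde V$ on $B_2$. The classical interior derivative estimate $\sup_{B_1}|D^3\tilde V|\le(3n/R)^3\sup_{B_2}|\tilde V|$ controls $|V|'_{2,\alpha(\cdot),B_1^+}$ by $|V|_{0,B_2^+}\le|u|_{0,B_2^+}+CR^2|f|_{0,B_2^+}$, exactly as in the proof of Theorem \ref{tw2}.

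Summing the bounds on $V$ and $W$ gives the desired estimate. The main technical hurdle is verifying that the argument of Lemma \ref{lembrz} carries over to the reflected potential $w(\cdot^*)$, which reduces to repeating the kernel decomposition of that lemma with an argument reflected across $T$ and tracking the parities of $\partial_n\Gamma$. The case $n=2$ is handled, as in Theorem \ref{tw2}, by regarding $u$ as a function of three variables solving Poisson's equation on a half-ball in $\mathbb{R}^3$.
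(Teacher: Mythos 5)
Your overall strategy matches the paper's: replace the Newtonian potential by the half-space Green-function potential
$W(x)=\int_{B_2^+}\bigl[\Gamma(x-y)-\Gamma(x^*-y)\bigr]f(y)\,dy$
so that $W|_T=0$, estimate $W$ in $C^{2,\alpha(\cdot)}$, and then apply odd reflection plus interior harmonic estimates to the remainder $V=u-W$. You also correctly identify that the naive decomposition with the plain Newtonian potential fails because the harmonic remainder does not vanish on $T$.

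However, the central technical step is handwaved. You assert that ``the H\"older estimate of Lemma~\ref{lembrz} transfers to $W$ after accounting for the parity of $\partial_n$ under the reflection,'' but Lemma~\ref{lembrz} covers only the Newtonian potential of $f$ over $B_2^+$; it says nothing about the reflected piece $\tilde w(x)=\int_{B_2^+}\Gamma(x^*-y)f(y)\,dy$. Writing $\tilde w(x)=\int_{B_2^-}\Gamma(x-z)f(z^*)\,dz$ shows that for $x\in B_1^+$ near $T$ the kernel $D_{ij}\Gamma(x-z)$ is near-singular (the singular set $\{x=z\}$ lies on $T$, which meets $\bar B_1^+$), so $D^2\tilde w$ is a genuine borderline singular integral, not a harmless smooth term whose sign flips under reflection. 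Tracking ``parities of $\partial_n\Gamma$'' does not by itself produce a variable-H\"older bound here. The paper resolves exactly this difficulty by a further decomposition: it reflects $f$ evenly to $f^*$ on $D=B_2^+\cup B_2^-\cup(B_2\cap T)$, proves that the reflected exponent $\alpha^*$ is still log-H\"older and that $f^*\in C^{\alpha^*(\cdot)}$ with controlled norm, rewrites $W=2\int_{B_2^+}\Gamma(\cdot-y)f\,dy - w^*$ where $w^*$ is the Newtonian potential of $f^*$ over $D$, and then relates $w^*$ to a full-ball Newtonian potential $\bar w$ over $B_2$ (so that Lemma~\ref{lem1} applies) modulo a harmonic correction $g$ that is bounded by interior harmonic estimates. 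Your proposal omits all of this machinery: the log-H\"older stability of $\alpha$ under reflection, the $C^{\alpha^*(\cdot)}$ bound for $f^*$, and the intermediate potentials $w^*,\bar w, g$. Without these steps, the bound $|D^2W|'_{0,\alpha(\cdot),B_1^+}\le C|f|'_{0,\alpha(\cdot),B_2^+}$ is not established, and the remainder of the argument does not close.
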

\begin{proof}
For $x=(x',-x_n)\in\mathbb{R}^n$, where $x'=(x_1,\ldots,x_{n-1})$ we define $x^*=(x',-x_n)$. 
We set $B_2^-=\{x^*\colon x\in B_2^+\}$ and $D=B^+_2\cup B^-_2\cup \left(B_2\cap T\right)$. Next, we define the reflection map and the reflection exponent
\begin{align*}
f^*(x)=\left\{\begin{array}{ll}
f(x),&\textrm{ if $x\in B_2^+\cup(B_2\cap T)$}\\
f(x^*),&\textrm{ if $x\in B_2^-$}\end{array}\right.&&\alpha^*(x)=\left\{\begin{array}{ll}
\alpha(x),&\textrm{ if $x\in B_2^+\cup(B_2\cap T)$}\\
\alpha(x^*),&\textrm{ if $x\in B_2^-$.}\end{array}\right.
\end{align*} 
First of all, we show that $\alpha^*\in\mathcal{A}^{\log}(D)$. Let us assume that $|x-y|\leq \frac{1}{2}$. If $x,y\in B^+_2\cup T$ or $x,y\in B^-_2$, then it is clear that 
\begin{align*}
|\alpha^*(x)-\alpha^*(y)|\left|\ln|x-y|\right|\leq c_{\log}(\alpha).
\end{align*}
Let us assume that $x\in B^+_2\cup T$ and $y\in B^-_2$. 
Since
\begin{align*}
|x-y^*|\leq|x-y|,
\end{align*}
we get 
\begin{align*}
|\alpha^*(x)-\alpha^*(y)|\left|\ln|x-y|\right|\leq|\alpha(x)-\alpha(y^*)|\left|\ln|x-y^*|\right|\leq c_{\log}(\alpha).
\end{align*}
Now, assuming that $|x-y|>\frac{1}{2}$, we obtain
\begin{align*}
|\alpha^*(x)-\alpha^*(y)|\left|\ln|x-y|\right|\leq2\alpha^+\max\left\{|\ln\frac{1}{2}|, \ln2\hbox{diam}(\Omega)\right\}.
\end{align*}
Next, we prove  that $f^*\in C^{\alpha^*(\cdot)}(D)$ and $|f^*|'_{0,\alpha^*(\cdot),D}\leq |f|'_{0,\alpha(\cdot), B_2^+}$. 
If $x,y\in B^+_2$ or $x,y\in B^-_2$ then, it is easy to see that
\begin{align*}
R^{\alpha^*(x)}\frac{|f^*(x)-f^*(y)|}{|x-y|^{\alpha^*(x)}}\leq[f]'_{0,\alpha(\cdot), B_2^+}.
\end{align*}
If $x\in B^+_2\cup T$ and $y \in B^-_2$, thus we get
\begin{align*}
R^{\alpha^*(x)}\frac{|f^*(x)-f^*(y)|}{|x-y|^{\alpha^*(x)}}\leq R^{\alpha^*(x)}\frac{|f(x)-f(y^*)|}{|x-y^*|^{\alpha(x)}}\leq[f]'_{0,\alpha(\cdot)B_2^+}.
\end{align*}
So we obtain
\begin{align*}
[f^*]_{0,\alpha^*(\cdot), D}\leq[f]'_{0,\alpha(\cdot)B_2^+},
\end{align*}
and this leads us to the following inequality
\begin{align*}
|f^*|'_{0,\alpha^*(\cdot),D}\leq |f|'_{0,\alpha(\cdot), B_2^+}.
\end{align*}
Let 
$$
w(x)=\int_{B^+_2}\left(\Gamma(x-y)-\Gamma(x^*-y)\right)f(y)dy.
$$
It is easy to see that $w(x',0)=0$ and that for $x\in B_2^+$ we have $\Delta w(x)=f$ since
$$
\Delta \int_{B^+_2}\Gamma(x-y)f(y)dy=f(x)
$$
and
$$
\Delta\int_{B^+_2}\Gamma(x^*-y)f(y)dy=\int_{B^+_2}\Gamma(x-y^*)f(y)dy=0.
$$
Moreover, we see that
$$
\int_{B^+_2}\Gamma(x-y^*)f(y)dy=\int_{B^-_2}\Gamma(x-y)f^*(y)dy,
$$
thus, we get
\begin{eqnarray*}
w(x)=2\int_{B^+_2}\Gamma(x-y)f(y)dy-\int_D\Gamma(x-y)f^*(y)dy.
\end{eqnarray*}
Let us denote 
\begin{eqnarray*}
    w^*(x)=\int_D\Gamma(x-y)f^*(y)dy, \quad \bar{w}(x)=\int_{B_2}\Gamma(x-y)f^*(y)dy.
\end{eqnarray*}
Observe that, by Lemma \ref{lem1}, we have
\begin{eqnarray}\label{nrow1}
\left|D^2\bar{w}\right|'_{0,\alpha(\cdot),B_1^+} \leq C|f^*|'_{0,\alpha^*(\cdot),D}\leq C|f|'_{0,\alpha(\cdot),B_2^+}.
\end{eqnarray}
There exists a harmonic function $g$ on $B_2$ such that we have $w^*=g+\bar{w}$. Let us assume that $n>2$, thus by estimates for harmonic function we have
\begin{align*}
|D^2g|'_{0,\alpha(\cdot),B_1^+}\leq CR^{-2} |g|_{0,B_2}\leq CR^{-2}\left(|w^*|_{0,B_2}+|\bar{w}|_{0,B_2}\right).
\end{align*}
Let $x\in B_2$, then
\begin{align}\label{pomrow1}
|w^*(x)|\leq C|f^*|_{0,D}\int_{D}|\Gamma(x-y)|dy\leq C|f|_{0,B^+_2}\int_{B(x,6R)}|\Gamma(x-y)|dy=CR^2|f|_{0,B^+_2}.
\end{align}
Gathering (\ref{pomin1}) with inequality $|\bar{w}|_{0,B_2}\leq C R^2|f|_{0,B_2^+}$ yields 
\begin{align*}
|D^2g|'_{0,\alpha(\cdot),B_1^+}\leq C |f|_{0,B_2^+}.
\end{align*}
Hence, thanks to inequality (\ref{nrow1}) we obtain
\begin{align*}
|D^2w^*|'_{0,\alpha(\cdot),B_1^+}\leq C |f|'_{0,B_2^+}.
\end{align*}
Consequently, by Lemma \ref{lembrz} we have
\begin{align}\label{nrow2}
\left|D^2w\right|'_{0,\alpha(\cdot),B^+_1}\leq C|f|'_{0,\alpha(\cdot),B_2^+}.
\end{align}
Furthermore, by similar consideration as in (\ref{pomrow1}) we get the following inequality
\begin{align}\label{nrow3}
|w|_{0,B_1^+}\leq CR^2|f|_{0,B^+_2}.
\end{align}
Next, for $x\in B_1^+$ we get
\begin{align*}
|Dw(x)|\leq2\int_{B^+_2}|D\Gamma(x-y)||f(y)|dy-\int_D|\Gamma(x-y)|f^*(y)|dy\leq C|f|_{0, B^+_2}\int_{B(x, 6R)}|x-y|^{1-n}dy=CR|f|_{0, B^+_2}.
\end{align*}
In view of the above inequality with (\ref{nrow2}) and (\ref{nrow3}), we conclude
\begin{align*}
|w|'_{2,\alpha(\cdot),B_1^+}\leq C R^2|f|'_{0,\alpha(\cdot), B_2^+}.
\end{align*}
Finally, we put $v=u-w$, then $v$ is a harmonic function and $v=0$ on $T$. Hence, by reflection it can be extended to a harmonic function in the whole $B_2$. 
Thus, by estimates of harmonic functions as in the proof of Theorem~\ref{tw2}, we see that 
\begin{align*}
\left|v\right|'_{2,\alpha(\cdot),B^+_1} \leq C|v|_{0,B_2}=C|v|_{0,B^+_2}\leq C\left(|u|_{0,B^+_2}+R^2|f|_{0,B^+_2}\right)
\end{align*}
and the proof follows. When $n=2$, we can proceed in the same manner as in the end of the proof of Theorem~\ref{tw2}.
\end{proof}

Let $\Omega$ be an open subset of $\mathbb{R}^n$ and $T\subset \partial \Omega$.  For $x,\ y\in\Omega$ we define $\bar{d}_x=\hbox{dist}(x,\ \partial\Omega \setminus T)$, $\bar{d}_{x,y}=~\min(\bar{d}_x,\ \bar{d}_y)$. 
In the sequel, we shall use the following notation
\begin{eqnarray*}
[u]^*_{k,0,\Omega\cup T}=[u]^*_{k,\Omega\cup T} = \sup_{\substack{x\in\Omega
 |\beta|=k}}\bar{d}_x^k\left|D^{\beta}(x)\right|,\quad 	[u]^*_{k,\alpha(\cdot),\Omega \cup T}=\sup_{\substack{x,y\in\Omega |\beta|=k}} \bar{d}_{x,y}^{k+\alpha(x)}\frac{\left|D^{\beta}u(x)-D^{\beta}u(y)\right|}{|x-y|^{\alpha(x)}},	\\
|u|^*_{k,0,\Omega\cup T}=|u|^*_{k,\Omega\cup T}=\sum\limits_{j=0}^k[u]_{j,\Omega\cup T}^*,\quad		|u|_{k,\alpha(\cdot),\Omega\cup T}=|u|^*_{k,\Omega\cup T}+[u]_{k,\alpha(\cdot),\Omega\cup T}^*,\\
|u|_{0,\alpha(\cdot),\Omega\cup T}=\sup_{x\in\Omega}\bar{d}^k_x|u(x)|+\sup_{x,y\in\Omega}\bar{d}^{k+\alpha(x)}_{x,y}\frac{|u(x)-u(y)|}{|x-y|^{\alpha(x)}}.
\end{eqnarray*}

\begin{theo}\label{laplthbr}
Let $\Omega$ be an open and bounded subset of $\mathbb{R}^n_+$ with boundary portion $T$ on $\{x_n=0\}$ and let 
$u\in C^2\left(\Omega\right)\cap C^0\left({\Omega\cup T}\right),\ f\in C^{\alpha(\cdot)}(\bar{\Omega})$ satisfy $\Delta u=f$ and $u=0$ on $T$. Then
\begin{equation*}
|u|^*_{2,\alpha(\cdot),\Omega\cup T}\leq C\left(|u|_{0,\Omega}+|f|^{(2)}_{0, \alpha(\cdot), \Omega\cup T}\right),
\end{equation*}
where $C=C(\textup{diam}(\Omega), n, \alpha^-, \alpha^+, c_{\log}(\alpha))$.
\end{theo}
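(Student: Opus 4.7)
My plan is to adapt the proof of Theorem \ref{twlap} almost verbatim, replacing the interior estimate (Theorem \ref{tw2}) by the boundary estimate (Theorem \ref{analtw2}) whenever the base point $x$ lies near the flat boundary portion $T$.

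Fix $x \in \Omega$ and write $\bar d_x = 6R$, where the factor $6$ is chosen so that the auxiliary half-ball used in the boundary case has enough room to sit inside $\Omega \cup T$ and so that $\bar d_y$ remains comparable to $R$ on it. I split into two cases. If $d_x \ge R$, then $B(x,R)\subset \Omega$ and Theorem \ref{tw2} applied to the concentric balls $B(x,R/2)$ and $B(x,R)$ controls $R|Du(x)| + R^2|D^2u(x)|$ and the weighted local Hölder seminorm of $D^2u$ at $x$ in terms of $|u|_{0,\Omega}$ and $R^2|f|'_{0,\alpha(\cdot),B(x,R)}$. In the complementary case $d_x < R$, the nearest boundary point $x_0$ must lie in $T$ (since $d_x<R<\bar d_x$), and $\mathrm{dist}(x_0,\partial\Omega\setminus T)\ge 5R$, so the half-ball $B(x_0,4R)^+$ lies in $\Omega\cup T$ and contains $x$ inside $B(x_0,R)^+$. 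Theorem \ref{analtw2} then gives the analogous estimate on $B(x_0,2R)^+$. In either case, since $\bar d_y \ge R$ throughout the auxiliary (half-)ball $W$, one has $R^2|f|'_{0,\alpha(\cdot),W}\le C\,|f|^{(2)}_{0,\alpha(\cdot),\Omega\cup T}$ and the local $L^\infty$ bound of $u$ is trivially controlled by $|u|_{0,\Omega}$.

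The pointwise bounds obtained so far imply $\bar d_x|Du(x)| + \bar d_x^2|D^2u(x)| \le C\bigl(|u|_{0,\Omega}+|f|^{(2)}_{0,\alpha(\cdot),\Omega\cup T}\bigr)$, giving the $[u]^*_{1,\Omega\cup T}$ and $[u]^*_{2,\Omega\cup T}$ parts of the desired estimate. For the Hölder seminorm $[u]^*_{2,\alpha(\cdot),\Omega\cup T}$ I pick $x,y\in\Omega$ with $\bar d_x\le\bar d_y$ and proceed exactly as in the proof of Theorem \ref{twlap}: if $y$ lies in the auxiliary (half-)neighborhood $W_x$ of $x$, the estimate above directly controls the Hölder difference; if $y\notin W_x$ then $|x-y|\ge cR$, so
\[
\bar d_{x,y}^{2+\alpha(x)}\frac{|D^2u(x)-D^2u(y)|}{|x-y|^{\alpha(x)}}\le C R^{2}\bigl(|D^2u(x)|+|D^2u(y)|\bigr),
\]
which is bounded by $C(|u|_{0,\Omega}+|f|^{(2)}_{0,\alpha(\cdot),\Omega\cup T})$ thanks to the pointwise step applied both at $x$ and at $y$ (using $\bar d_x\le \bar d_y$ to compare weights).

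The main obstacle is the boundary case: one must flatten $T$ by a rigid motion so that Theorem \ref{analtw2} applies, confirm that the enlarged half-ball really sits inside $\Omega\cup T$, and verify that $\bar d_y\sim R$ uniformly on that set so that the primed local norms of $f$ translate into the weighted global norm $|f|^{(2)}_{0,\alpha(\cdot),\Omega\cup T}$ (rather than the other way around). All log-Hölder bookkeeping on factors of the form $\delta^{\alpha(\bar x)-\alpha(x)}$ and $R^{\alpha(x)-\alpha(\bar x)}$ has already been absorbed inside Lemmas \ref{lem1}, \ref{lembrz} and Theorem \ref{analtw2}, so no new estimates of that type are needed. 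The case $n=2$ is reduced to $n=3$ by the same trivial cylindrical extension used at the end of the proof of Theorem \ref{tw2}.
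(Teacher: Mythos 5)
Your proposal is correct and follows essentially the same strategy as the paper: reduce the weighted global estimate to the local primed estimates of Theorems \ref{tw2} and \ref{analtw2} on balls or half-balls whose size is a fixed fraction of $\bar d_x$, then absorb the primed local norms of $f$ into $|f|^{(2)}_{0,\alpha(\cdot),\Omega\cup T}$ using $\bar d_y\gtrsim R$ on the auxiliary set, and finally handle the Hölder seminorm by splitting $y$ into ``near'' and ``far''. Two small remarks. First, the paper avoids your explicit case split ($d_x\geq R$ vs.\ $d_x<R$) and the re-centering at $x_0\in T$: it simply sets $3R=\bar d_x$ and applies Theorem \ref{analtw2} to the half-balls $B(x,R)^+$, $B(x,2R)^+$ centered at the interior point $x$ itself (Theorem \ref{analtw2} permits the center to lie anywhere in $\bar{\mathbb R}^n_+$, and when $B_2$ does not meet $\{x_n=0\}$ the statement silently degenerates to the interior case of Theorem \ref{tw2}). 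Your version is correct but does a bit more bookkeeping than necessary. Second, the ``obstacle'' you flag of flattening $T$ by a rigid motion does not arise here: in the hypotheses of Theorem \ref{laplthbr} the portion $T$ is already contained in $\{x_n=0\}$, so Theorem \ref{analtw2} applies directly with no change of coordinates; the genuine flattening argument is only needed later, in Lemma \ref{schesbrzex}, where the boundary is merely $C^{2,\alpha^+}$.
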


\begin{proof}
We proceed analogously to the proof of Theorem \ref{twlap}.
Let $x\in\Omega$ and $3R=\bar{d}_x$. From Theorem \ref{analtw2} we have the following inequalities 
\begin{eqnarray*}
\bar{d}_x|Du(x)|+\bar{d}^2_x\left|D^2u(x)\right| &\leq& 3R|Du|_{0,B^+_1}+(3R)^2\left|D^2u\right|_{0,B^+_1} \leq C\left(|u|_{0,B^+_2}+R^2|f|'_{0,\alpha(\cdot),B^+_2}\right)	\\
  &\leq& C\left(|u|_{0,\Omega\cup T}+|f|^{(2)}_{0,\alpha(\cdot),\Omega\cup T}\right).
\end{eqnarray*}
Now, let $x,y\in\Omega$ and $\bar{d}_x\leq \bar{d}_y$.
For $y\in B^+_1$, we get
\begin{eqnarray*}
\bar{d}_{x,y}^{2+\alpha(x)}\frac{\left|D^2u(x)-D^2u(y)\right|}{|x-y|^{\alpha(x)}} = (3R)^{2+\alpha(x)}\frac{\left|D^2u(x)-D^2u(y)\right|}{|x-y|^{\alpha(x)}} \leq CR^2\left[D^2u\right]'_{\alpha(\cdot),B_1}.
\end{eqnarray*}
When $y\notin B_1$, we have
\begin{eqnarray*}
\bar{d}_{x,y}^{2+\alpha(x)}\frac{\left|D^2u(x)-D^2u(y)\right|}{|x-y|^{\alpha(x)}} \leq \frac{(3R)^{\alpha(x)+2}}{R^{\alpha(x)}}\left(\left|D^2u(x)\right|+\left|D^2(y)\right|\right).
\end{eqnarray*}
Therefore, combining the above inequalities, we obtain 
\begin{eqnarray*}
\bar{d}_{x,y}^{2+\alpha(x)} &&\hspace{-8mm} \frac{\left|D^2u(x)-D^2u(y)\right|}{|x-y|^{\alpha(x)}} \leq CR^2\left(\left|D^2u(x)\right|+\left|D^2(y)\right|+\left[D^2u\right]'_{\alpha(\cdot),B^+_1}\right)	\\
&\leq& C\left(|u|'_{2,\alpha(\cdot),B^+_1} +[u]^*_{2,\Omega\cup T}\right)\leq C\left(|u|_{0,B^+_2}+R^2|f|'_{0,\alpha(\cdot),B^+_2}+[u]^*_{2,\Omega\cup T}\right)\\ 
&\leq& C\left(|u|_{0,\Omega\cup T}+|f|^{(2)}_{0,\alpha(\cdot),\Omega\cup T}\right).
\end{eqnarray*}
This completes the proof.
\end{proof}

\section{A priori estimates for fully elliptic equation}
In this section we study a priori estimates for solutions to the problem $Lu=f$, where the operator $L$ has the form 
\begin{equation*}
Lu=a^{ij}D_{ij}u+b^iD_iu+cu,
\end{equation*}
and the coefficients belong to the variable H\"{o}lder space $C^{\alpha(\cdot)}(\bar{\Omega})$.
The main aim of this section is to show the Schauder estimates in variable H\"{o}lder spaces. 

Let us start our considerations with operators with constant coefficients. 
\begin{lem}\label{lemful1}
Let $A= [A^{ij}]$ be a constant and symmetric matrix such that
\begin{equation}\label{koer}
	\lambda|\zeta|^2\leq A^{ij}\zeta_i\zeta_j\leq \Lambda|\zeta|^2, \forall_{\zeta\in\mathbb{R}^n}
\end{equation}
for certain positive constants $\lambda,\ \Lambda$. Now, let us define the operator $L_0u=A^{ij}D_{ij}u$.
\begin{itemize}
\item[(a)] Let $u\in C^2(\Omega)$, $f\in C^{\alpha(\cdot)}(\bar{\Omega})$ satisfy $L_0u=f$ in an open and bounded set $\Omega\subset\mathbb{R}^n$, then 
\begin{equation*}
	|u|^*_{2,\alpha(\cdot),\Omega}\leq C\left(|u|_{0,\Omega}+|f|_{0,\alpha(\cdot), \Omega}^{(2)}\right).
\end{equation*}
\item[(b)] Let $\Omega\subset\mathbb{R}^n_+$ be an open and bounded set with a part of boundary at $T=\{x\colon x_n=0\}$ and let 
$u\in C^2(\Omega)\cap C^0(\Omega\cup T),\ f\in C^{\alpha(\cdot)}(\bar{\Omega})$ satisfy $L_0u=f$ in $\Omega$, $u=0$ on $T$. Then
\begin{equation*}
	|u|^*_{2,\alpha(\cdot),\Omega\cup T}\leq C\left(|u|_{0,\Omega}+|f|^{(2)}_{0,\alpha(\cdot), \Omega\cup T}\right),
\end{equation*}
\end{itemize}
where $C=C(\textup{diam}(\Omega), n, \alpha^-, \alpha^+, c_{\log}(\alpha), \Lambda, \lambda)$.
\end{lem}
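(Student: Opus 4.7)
The plan is to reduce both parts to the Laplacian estimates of Theorems \ref{twlap} and \ref{laplthbr} by a linear change of variables that converts $L_0$ into $\Delta$.

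For part (a), since $A$ is symmetric and satisfies (\ref{koer}), there exists an invertible matrix $P$ with $PP^{T}=A$ (for example $P=A^{1/2}$), and both $\|P\|$ and $\|P^{-1}\|$ are controlled by $\lambda$ and $\Lambda$. Setting $x=Py$ and $\tilde u(y):=u(Py)$, a direct computation gives
\begin{equation*}
\Delta_{y}\tilde u(y)=\sum_{k,l}(PP^{T})_{kl}D_{kl}u(Py)=L_{0}u(Py),
\end{equation*}
so with $\tilde f(y):=f(Py)$ we have $\Delta\tilde u=\tilde f$ on $\tilde\Omega:=P^{-1}\Omega$. Define the transported exponent $\tilde\alpha(y):=\alpha(Py)$. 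From $|Py-Py'|\le\|P\|\,|y-y'|$ and its inverse estimate, for small $|y-y'|$ one obtains
\begin{equation*}
\bigl|\ln|y-y'|\bigr|\,|\tilde\alpha(y)-\tilde\alpha(y')|\le c_{\log}(\alpha)+\alpha^{+}\bigl|\ln\|P\|\bigr|,
\end{equation*}
and for larger separations boundedness of $\alpha$ and of $\textup{diam}(\tilde\Omega)$ gives a trivial bound, so $\tilde\alpha\in\mathcal{A}^{\log}(\tilde\Omega)$ with constants depending only on the admissible parameters. The bi-Lipschitz nature of $P$ gives $d_{\tilde x}\asymp d_{x}$ and $|\tilde x-\tilde y|\asymp|x-y|$, so each of $|u|^{*}_{2,\alpha(\cdot),\Omega}$, $|u|_{0,\Omega}$, and $|f|^{(2)}_{0,\alpha(\cdot),\Omega}$ is comparable to the corresponding tilded quantity. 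Applying Theorem \ref{twlap} to $\tilde u$ and pulling back yields (a).

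For part (b) we additionally require $P$ to preserve the half-space $\mathbb{R}^{n}_{+}$ and the plane $T$. Writing
\begin{equation*}
A=\begin{pmatrix}\tilde A & a\\ a^{T} & A^{nn}\end{pmatrix}
\end{equation*}
and looking for $P$ in upper-triangular block form $P=\begin{pmatrix}\hat P & p\\ 0 & P_{nn}\end{pmatrix}$ satisfying $PP^{T}=A$, one is forced to take $P_{nn}=\sqrt{A^{nn}}$, $p=a/\sqrt{A^{nn}}$, and $\hat P\hat P^{T}=\tilde A-aa^{T}/A^{nn}$, which is the Schur complement of $A^{nn}$ in $A$ and is positive definite because $A$ is; hence such $\hat P$ exists with operator norm controlled by $\lambda,\Lambda$. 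The block structure together with $P_{nn}>0$ ensures that $x=Py$ maps $\{y_{n}>0\}$ bijectively onto $\{x_{n}>0\}$ and fixes $T$. All comparability arguments from (a) now apply verbatim on $\tilde\Omega\cup\tilde T$, where $\tilde T$ is the image of $T$ under $P^{-1}$, and Theorem \ref{laplthbr} finishes the proof.

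The main obstacle is not conceptual but rather the bookkeeping: one must check that the log-Hölder constant of $\tilde\alpha$, the comparison of $d_{x}$ (respectively $\bar d_{x}$) with its transformed counterpart, and the scaling of each weighted seminorm $[\,\cdot\,]^{*}_{k,\alpha(\cdot)}$ and $[\,\cdot\,]^{(2)}_{0,\alpha(\cdot)}$ all produce constants depending only on $n,\alpha^{\pm},c_{\log}(\alpha),\lambda,\Lambda,\textup{diam}(\Omega)$. The key structural input for (b) is the block-triangular Cholesky-type factorisation of $A$ described above.
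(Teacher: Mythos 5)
Your proof is correct and follows essentially the same route as the paper: reduce $L_0$ to the Laplacian by a bi-Lipschitz linear change of variables, check that the weighted seminorms, boundary distances and log-H\"older constant all transform comparably, and invoke Theorems~\ref{twlap} and~\ref{laplthbr}. The only cosmetic difference is the construction of the transformation in part~(b): the paper diagonalises $A$ by an orthogonal matrix, rescales, and then composes with a rotation to realign $\{x_n>0\}$, whereas you build a block upper-triangular Cholesky-type factor of $A$ that preserves the half-space by design; both satisfy the same requirements and yield the same constants.
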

\begin{proof}
Let $P=[P^{ij}]$ be a nonsingular matrix with real entries and let us define $\tilde{u}(y)=u(P^{-1}y)$. We see that $\tilde{u}$ is defined 
on $\widetilde{\Omega}=P\Omega$. By straightforward calculation we obtain
\begin{equation*}
	D^2\tilde{u}(y)=P^{-T}D^2u(P^{-1}y)P^{-1}.
\end{equation*}
Hence, by the basic properties of traces we get
\begin{equation*}
	A^{ij}D_{ij}u(P^{-1}y)=\widetilde{A}^{ij}D_{ij}\tilde{u}(y),
\end{equation*}
where $\widetilde{A}=PAP^T$. We can take an orthogonal matrix $Q$ such that $QAQ^T$ is a diagonal matrix with eigenvalues $\lambda_1,\ldots,\lambda_n$. 
Let $D=[\lambda_i^{-\frac{1}{2}}\delta_{ij}]$, then $(DQ)A(DQ)^T$ is a unit matrix. Finally, let $R$ be a rotation such that $RDQ$ takes half space 
$\{x_n>0\}$ into $\{y_n>0\}$. Let us take $P=RDQ$. Then $\widetilde{A}$ is a unit matrix, thus we conclude that $\tilde{u}$ satisfies $\Delta \tilde{u}=\tilde{f}$, 
where $\tilde{f}=f\circ P^{-1}$ on $\widetilde{\Omega}$. We have got $P^{-T}P^{-1}=A$, so we see
\begin{align*}
	|P^{-1}x|^2=\left(P^{-1}x\right)^TP^{-1}x=x^TP^{-T}P^{-1}x=x^TAx.
\end{align*}
Thus, by condition (\ref{koer}) we obtain
\begin{align}\label{pomin1}
\Lambda^{-\frac{1}{2}}|x|\leq|Px|\leq\lambda^{-\frac{1}{2}}|x|.
\end{align}
Let us denote by $\tilde{d}_y=\hbox{dist}(y,\ \partial \widetilde{\Omega})$ and $\tilde{d}_{x,y}=\min\{\tilde{d}_x,\ \tilde{d}_y\}$. By virtue of (\ref{pomin1}), 
we obtain $\Lambda^{-\frac{1}{2}}d_x\leq\tilde{d}_{Px}\leq\lambda^{-\frac{1}{2}}d_x$. Let $\tilde{\alpha}=\alpha\circ P^{-1}$, then we have 
\begin{align*}
	[\tilde{u}]^*_{1,\tilde{\alpha}(\cdot),\widetilde{\Omega}}=\sup_{\substack{x\neq y\\ x,y\in\widetilde{\Omega}}}		\tilde{d}_{x,y}^{1+\tilde{\alpha}(x)}\frac{|D\tilde{u}(x)-D\tilde{u}(y)|}{|x-y|^{\tilde{\alpha}(x)}}=\\\sup_{	\substack{x\neq y\\ x,y\in\widetilde{\Omega}}}\tilde{d}_{x,y}^{1+\alpha(P^{-1}x)}\frac{|P^{-1}\left(Du(P^{-1}x)-Du(P^{-1}y)\right)|}{|x-y|^{\alpha(P^{-1}x)}}=
\sup_{\substack{x\neq y\\ x,y\in\Omega}}\tilde{d}_{Px,Py}^{1+\alpha(x)}\frac{|P^{-1}\left(Du(x)-Du(y)\right)|}{|P\left(x-y\right)|^{\alpha(x)}}.
\end{align*}
By the last equality, we obtain 
\begin{align*}
	C_2[u]^*_{1,\alpha(\cdot),\Omega}\leq[\tilde{u}]^*_{1,\tilde{\alpha}(\cdot),\widetilde{\Omega}}\leq C_1[u]^*_{1,\alpha(\cdot),\Omega}.
\end{align*}
In similar manner we get the subsequent inequalities
\begin{equation}\label{pomin2}
\begin{split}
C_2[u]^*_{k,\alpha(\cdot),\Omega}\leq[\tilde{u}]^*_{k,\tilde{\alpha}(\cdot),\widetilde{\Omega}}\leq C_1[u]^*_{k,\alpha(\cdot),\Omega}\\
C_2[u]^{(l)}_{k,\alpha(\cdot),\Omega}\leq[\tilde{u}]^{(l)}_{k,\tilde{\alpha}(\cdot),\widetilde{\Omega}}\leq C_1[u]^{(l)}_{k,\alpha(\cdot),\Omega},
\end{split}
\end{equation}
where $k, l=0, 1, 2, 3,\ldots$. Let us denote $PT$ as $\widetilde{T}$, then it is easy to see that we have
\begin{equation}\label{pomin3}
\begin{split}
C_2[u]^*_{k,\alpha(\cdot),\Omega\cup T}\leq[\tilde{u}]^*_{k,\tilde{\alpha}(\cdot),\widetilde{\Omega}\cup\widetilde{T}}\leq C_1[u]^*_{k,\alpha(\cdot),\Omega\cup T}\\
C_2[u]^{(l)}_{k,\alpha(\cdot),\Omega\cup T}\leq[\tilde{u}]^{(l)}_{k,\tilde{\alpha}(\cdot),\widetilde{\Omega}\cup\widetilde{T}}\leq C_1[u]^{(l)}_{k,\alpha(\cdot),\Omega\cup T},
\end{split}
\end{equation}
for $k, l=0, 1, 2, 3,\ldots$. Next, by Theorem \ref{twlap}, we conclude
\begin{equation*}
|\tilde{u}|^{*}_{2,\tilde{\alpha}(\cdot),\widetilde{\Omega}}\leq C\left(|\tilde{u}|_{0,\widetilde{\Omega}}+|\tilde{f}|^{(2)}_{0,\tilde{\alpha}(\cdot),\widetilde{\Omega}}\right),
\end{equation*}
therefore, by (\ref{pomin2}) we see that
\begin{equation*}
	|u|^{*}_{2,\alpha(\cdot),\Omega}\leq C\left(|u|_{0,\Omega}+|f|^{(2)}_{0,\alpha(\cdot),\Omega}\right).
\end{equation*}
This, finishes the proof of $(a)$.  Finally gathering Theorem \ref{laplthbr} with inequalities (\ref{pomin3}) we get $(b)$.
\end{proof}
We can now formulate our first main result. 
\begin{theo}[Interior Schauder estimates]\label{sches}
Let $\Omega\subset\mathbb{R}^n$ be an open and bounded set If $u\in C^{2,\alpha(\cdot)}(\bar{\Omega})$ satisfies
\begin{equation}\label{equel}
Lu=a^{ij}D_{ij}u+b^iD_iu+cu=f,
\end{equation}
where $f\in C^{\alpha(\cdot)}(\bar{\Omega})$ and there are positive constants $\lambda$ and $\Lambda$ such that
\begin{align*}
&a^{ij}(x)\zeta^i\zeta^j\geq\lambda|\zeta|^2\qquad\textrm{for all $x\in\Omega$ and for all $\zeta\in\mathbb{R}^n$,}\\
&|a^{ij}|^{(0)}_{0,\alpha(\cdot),\Omega},|b^i|_{0,\alpha(\cdot),\Omega}^{(1)},|c|^{(2)}_{0,\alpha(\cdot),\Omega}\leq \Lambda,
\end{align*} then
\begin{equation*}
|u|^*_{2,\alpha(\cdot), \Omega}\leq C\left(|u|_{0, \Omega}+|f|^{(2)}_{0,\alpha(\cdot), \Omega}\right),
\end{equation*}
where $C=C(\textup{diam}(\Omega), n, \alpha^-, \alpha^+, c_{\log}(\alpha), \Lambda, \lambda)$. 
\end{theo}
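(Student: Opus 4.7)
The plan is to adapt the classical frozen-coefficient Schauder argument to the variable H\"older setting, using Lemma~\ref{lemful1}(a) as the constant-coefficient base case. Fix a point $x_0 \in \Omega$, set $\rho = \mu\, d_{x_0}$ for a small parameter $\mu \in (0, 1/3)$ to be chosen below, and let $B := B(x_0, \rho)$. On $B$ the function $u$ solves
\begin{equation*}
L_0 u = F, \qquad L_0 u := a^{ij}(x_0) D_{ij} u, \qquad F := f - (a^{ij}(x) - a^{ij}(x_0)) D_{ij} u - b^i D_i u - c u.
\end{equation*}
Since $a^{ij}(x_0)$ inherits the ellipticity bounds $\lambda, \Lambda$, Lemma~\ref{lemful1}(a) applied on $B$ yields
\begin{equation*}
|u|^*_{2, \alpha(\cdot), B} \leq C_0 \bigl( |u|_{0, B} + |F|^{(2)}_{0, \alpha(\cdot), B} \bigr).
\end{equation*}

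Next I would decompose $|F|^{(2)}_{0,\alpha(\cdot),B}$ into contributions from its four summands. The $f$-term is bounded directly by $|f|^{(2)}_{0,\alpha(\cdot),\Omega}$. For the perturbation $(a^{ij}(x) - a^{ij}(x_0))D_{ij}u$ the sup-norm of its coefficient on $B$ is at most $[a^{ij}]_{\alpha(\cdot), x_0}\,\rho^{\alpha(x_0)} \leq \Lambda \rho^{\alpha(x_0)}$, and its H\"older seminorm is handled through the product rule combined with the log-H\"older property (used exactly as in the closing lines of the proof of Lemma~\ref{lem1}) to pass between $\rho^{\alpha(x_0)}$ and $\rho^{\alpha(x)}$. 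Recasting in the weighted norm on $B$ this produces a bound of the shape $C \mu^{\alpha^-}|u|^*_{2,\alpha(\cdot),B}$ plus lower-order terms. The $b^iD_iu$ and $cu$ contributions are treated similarly, yielding $C \mu^{\gamma}|u|^*_{2,\alpha(\cdot),B}$ for some $\gamma > 0$ plus multiples of $|u|_{0,B}$, after invoking the variable H\"older interpolation inequalities that are to be established in Section~6. Choosing $\mu$ so small that the total coefficient in front of $|u|^*_{2,\alpha(\cdot),B}$ is at most $1/2$ lets me absorb that term into the left-hand side, giving the local estimate
\begin{equation*}
|u|^*_{2,\alpha(\cdot),B(x_0,\mu d_{x_0})} \leq C\bigl(|u|_{0,\Omega} + |f|^{(2)}_{0,\alpha(\cdot),\Omega}\bigr).
\end{equation*}

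The last step is globalization. The weighted seminorms $[u]^*_{k,\alpha(\cdot),\Omega}$ are built from expressions of the form $d_{x,y}^{k+\alpha(x)} \cdot (\dots)$, and they are recovered from the local estimates by taking suprema over $x_0$: for pairs $x, y$ with $y \in B(x_0, \mu d_{x_0})$ the radius $\rho$ is comparable to $d_{x,y}$, while for the remaining pairs one bounds $|D^\beta u(x) - D^\beta u(y)|$ by the already-established pointwise estimates on $D^2 u$, exactly as in the final paragraph of the proof of Theorem~\ref{twlap}. The main obstacle I expect is bookkeeping for the variable exponent through the freezing step: the natural power appearing when the perturbation is estimated is $\rho^{\alpha(x_0)}$, but the weighted seminorm on the left of the Schauder estimate uses $\rho^{\alpha(x)}$; switching between them requires a log-H\"older bound on $\rho^{\alpha(x_0)-\alpha(x)}$, in the spirit of the finishing argument of Lemma~\ref{lem1}. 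A secondary obstacle is the careful setup of variable-exponent interpolation inequalities needed to eliminate the $b^iD_iu$ and $cu$ contributions, but these are provided in Section~6 of the paper.
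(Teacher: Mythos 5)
Your plan correctly identifies the frozen-coefficient argument anchored on Lemma~\ref{lemful1}(a), and the general structure (rewrite $L_0 u = F$, estimate $|F|^{(2)}_{0,\alpha(\cdot),B}$, absorb the top-order term by choosing $\mu$ small, then globalize) is the right one. However, there is a genuine gap in the crucial absorption step that the paper goes to some length to avoid.

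You claim that recasting the perturbation term in the $B$-weighted norm produces a factor of the form $C\mu^{\alpha^-}$ (or $C\mu^\gamma$ for some $\gamma>0$) in front of $[u]^*_{2,\alpha(\cdot),\cdot}$. In fact the correct exponent is $2\alpha^- - \alpha^+$, not $\alpha^-$. The discrepancy arises because the left-hand side conversion from the $B$-weighted seminorm to $d_{x_0}^{2+\alpha(x_0)}\frac{|D^2u(x_0)-D^2u(y_0)|}{|x_0-y_0|^{\alpha(x_0)}}$ costs a factor $\mu^{-(2+\alpha(x_0))}\leq\mu^{-(2+\alpha^+)}$, while the gain from Proposition~\ref{p1}-type estimates on $B$ yields only $\mu^{2+\alpha^-}$ (worst case over $\alpha(x)$ inside $B$) times another $\mu^{\alpha^-}$ from the smallness of $|a(x_0)-a|^{(0)}_{0,\alpha(\cdot),B}$. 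Multiplying these gives $\mu^{2\alpha^--\alpha^+}$, which is positive only if $2\alpha^- > \alpha^+$ on the set where you run the argument. Since the hypotheses of the theorem do not impose this restriction on $\Omega$, your absorption step (``choosing $\mu$ so small that the coefficient is at most $1/2$'') cannot in general be carried out directly on $\Omega$. This is precisely why the paper first proves an intermediate lemma (Lemma~\ref{lemsh}) valid only on subsets $D$ with $2\alpha_D^->\alpha_D^+$, and then upgrades to the full theorem by covering $\bar\Omega$ with finitely many small balls on each of which $2\alpha^->\alpha^+$ holds (guaranteed by the uniform continuity of a log-H\"older exponent), followed by a Lebesgue-number patching argument. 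Your ``globalization'' paragraph gestures at taking suprema over $x_0$ and handling far-apart pairs $x,y$, but it does not contain this covering step, which is the essential new idea needed to handle the variable exponent. Without it the argument fails whenever the oscillation of $\alpha$ over $\Omega$ exceeds $\alpha^-$.
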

\begin{proof}
First of all, we shall prove the following crucial lemma.
\begin{lem}\label{lemsh}
Under the hypotheses of Theorem \ref{sches}, if moreover we take an open set $D\subset \Omega$ such that $2\alpha^-_D>\alpha^+_D$, then the following estimate
\begin{equation*}
|u|^*_{2,\alpha(\cdot), D}\leq C\left(|u|_{0, D}+|f|^{(2)}_{0,\alpha(\cdot), D}\right)
\end{equation*}
holds, where $C=C(\textup{diam}(\Omega), n, \alpha_D^-, \alpha_D^+, c_{\log}(\alpha), \Lambda, \lambda)$
\end{lem}
\begin{proof}
From Lemma \ref{intlem}, for all $\epsilon>0$ we have the following interpolation inequality
\begin{eqnarray}
 \label{intpom1} 
|u|^*_{2,D}\leq C(\epsilon)|u|_{0,D}+\epsilon[u]^*_{2,\alpha(\cdot), D}.
\end{eqnarray}
By virtue of the above inequality  it is sufficient to estimate the seminorm $[u]_{2,\alpha(\cdot),D}^*$.
Let us take two distinct points $x_0, y_0 \in D$ and let $\mu \leq \frac{1}{2}$, $d_{x_0}=d_{x_0, y_0}$ and $d=\mu d_{x_0}$, $B=B(x_0, d)$. 
We rewrite (\ref{equel}) as follows
\begin{eqnarray*}
	a^{ij}(x_0)D_{ij}u=(a^{ij}(x_0)-a^{ij})D_{ij}u-b^iD_iu-cu+f=F.
\end{eqnarray*}
We consider the above equation on the ball $B$. Thus, by Lemma \ref{lemful1} we obtain for $y_0\in B\left(x_0,\frac{d}{2}\right)$, the following inequality
\begin{eqnarray*}
	\left(\frac{d}{2}\right)^{2+\alpha(x_0)}\frac{|D^2u(x_0)-D^2u(y_0)|}{|x_0-y_0|^{\alpha(x_0)}}\leq C\left(|u|_{0,B}+|F|^{(2)}_{0,\alpha(\cdot),B}\right).
\end{eqnarray*}
This gives
\begin{eqnarray*}
	d_{x_0}^{2+\alpha(x_0)}\frac{|D^2u(x_0)-D^2u(y_0)|}{|x_0-y_0|^{\alpha(x_0)}}\leq \frac{C}{\mu^{2+\alpha_D^+}}\left(|u|_{0,B}+|F|^{(2)}_{0,\alpha(\cdot),B}\right).
\end{eqnarray*}
Next, if $y_0\notin B(x_0,\frac{d}{2})$, by straightforward calculations we obtain 
\begin{eqnarray*}
d_{x_0}^{2+\alpha(x_0)}\frac{|D^2u(x_0)-D^2u(y_0)|}{|x_0-y_0|^{\alpha(x_0)}}\leq d_{x_0}^{2+\alpha(x_0)}\left(\frac{2}{d}\right)^{\alpha(x_0)}|D^2u(x_0)-D^2u(y_0)|\\
\leq\left(\frac{2}{\mu}\right)^{\alpha(x_0)}\left(d_{x_0}^2|D^2u(x_0)|+d_{y_0}^2|D^2u(y_0)|\right)\leq\frac{4}{\mu^{\alpha_D^+}}[u]^*_{2, D}.
\end{eqnarray*}
Combining these inequalities yields
\begin{eqnarray}\label{schin}
	d_{x_0}^{2+\alpha(x_0)}\frac{|D^2u(x_0)-D^2u(y_0)|}{|x_0-y_0|^{\alpha(x_0)}}\leq\frac{C}{\mu^{2+\alpha_D^+}}\left(|u|_{0,B}+|F|^{(2)}_{0,\alpha(\cdot),B}\right)+\frac{4}{\mu^{\alpha_D^+}}[u]^*_{2,D}.
\end{eqnarray}
In order to finish the proof, we need to estimate the expression $|F|^{(2)}_{0,\alpha(\cdot),B}$. By the triangle inequality we have 
\begin{eqnarray}\label{inn1}
|F|^{(2)}_{0,\alpha(\cdot),B}\leq\sum_{i,j}|(a^{ij}(x_0)-a^{ij})D_{ij}u|^{(2)}_{0,\alpha(\cdot),B}+\sum_i|b^iD_iu|^{(2)}_{0,\alpha(\cdot),B}
+|cu|^{(2)}_{0,\alpha(\cdot),B}+|f|^{(2)}_{0,\alpha(\cdot), B}.
\end{eqnarray}
Now, we shall need the following result.
\begin{prop} \label{p1}
Let $g\in C^{\alpha(\cdot)}(D)$, then the following estimates hold
\begin{eqnarray}
[g]^{(2)}_{0,\alpha(\cdot), B}\leq 8\mu^{2+\alpha_D^-}[g]^{(2)}_{0,\alpha(\cdot), D},\label{schpom1}\\
|g|_{0,B}^{(2)}\leq 4\mu^2 |g|_{0,D}^{(2)} \label{schpom2}.
\end{eqnarray}
\end{prop}
\begin{proof}
We give the proof only the first inequality, the second one is left to the reader. 
Let us notice that for $x\in B$, we have $d_x \geq d_{x_0}-d=(1-\mu)d_{x_0}$. Let us take $x, y\in B$ and introduce the quantity 
$d_{x, y,B}=\min\{\hbox{dist}(x,\partial B),\ \hbox{dist}(y,\partial B)\}$. Hence, we get
\begin{eqnarray*}
d_{x,y,B}^{\alpha(x)+2}\frac{|g(x)-g(y)|}{|x-y|^{\alpha(x)}}\leq d^{\alpha(x)+2}\frac{|g(x)-g(y)|}{|x-y|^{\alpha(x)}}=
d^{\alpha(x)+2}d_{x,y}^{-\alpha(x)-2}d_{x,y}^{\alpha(x)+2}\frac{|g(x)-g(y)|}{|x-y|^{\alpha(x)}}\leq \\
 d^{\alpha(x)+2}\left((1-\mu)d_{x_0}\right)^{-\alpha(x)-2}[g]^{(2)}_{0,\alpha(\cdot), D} \leq 8\mu^{2+\alpha_D^-}[g]^{(2)}_{0,\alpha(\cdot), D}.
\end{eqnarray*}
This completes the proof of Proposition \ref{p1}.
\end{proof}
Now, we can estimate each of the terms in (\ref{inn1}). By the virtue of the above proposition we get
\begin{eqnarray*}
	|\left(a(x_0)-a\right)D^2u|^{(2)}_{0,\alpha(\cdot),B}\leq|a(x_0)-a|_{0,\alpha(\cdot),B}^{(0)}|D^2u|^{(2)}_{0,\alpha(\cdot),B}\leq\\
	|a(x_0)-a|_{0,\alpha(\cdot),B}^{(0)}\left(4\mu^2[u]^*_{2, D}+8\mu^{2+\alpha_D^-}[u]^*_{2,\alpha(\cdot),D}\right).
\end{eqnarray*}
Subsequently, we estimate the quantity $|a(x_0)-a|_{0,\alpha(\cdot),B}^{(0)}$ as follows
\begin{eqnarray*}
	|a(x_0)-a|_{0,\alpha(\cdot),B}^{(0)}=\sup_{x\in B}|a(x_0)-a(x)|+\sup_{\substack{x\neq y\\x,y\in B}}d_{x,y,B}^{\alpha(x)}\frac{|a(x)-a(y)|}{|x-y|^{\alpha(x)}}\leq
	2\sup_{\substack{x\neq y\\x,y\in B}}d^{\alpha(x)}\frac{|a(x)-a(y)|}{|x-y|^{\alpha(x)}}{\leq}\\ 
2\sup_{x,y\in D}d^{\alpha(x)}d_{x,y}^{-\alpha(x)}\sup_{\substack{x\neq y\\x,y\in D}}d_{x,y}^{\alpha(x)}\frac{|a(x)-a(y)|}{|x-y|^{\alpha(x)}}\leq 
4\mu^{\alpha_D^-}[a]^*_{0,\alpha(\cdot),D}\leq 4\Lambda\mu^{\alpha_D^-}.
\end{eqnarray*}
Next, by applying the interpolation inequality from Lemma \ref{intlem} with $\epsilon=\mu^{\alpha_D^-}$ we have
\begin{align}\label{inn2}
|\left(a(x_0)-a\right)D^2u|^{(2)}_{0,\alpha(\cdot),B}\leq 32 \Lambda\mu^{2+\alpha_D^-}\left([u]^*_{2,D}+\mu^{\alpha_D^-}[u]^*_{2,\alpha(\cdot),D}\right)\nonumber\\
{\leq}32 \Lambda\mu^{2+\alpha_D^-}\left(C|u|_{0, D}+2\mu^{\alpha_D^-}[u]^*_{2,\alpha(\cdot),D}\right).
\end{align}
Furthermore, using inequalities (\ref{schpom1}), (\ref{schpom2}) and Lemma \ref{intlem} we get
\begin{eqnarray}
|bDu|^{(2)}_{0,\alpha(\cdot),B} \leq 8\mu^2\Lambda\left(C|u|_{0,D}+\mu^{2\alpha_D^-}[u]^*_{2,\alpha(\cdot),D}\right),\label{inn3}\\
|cu|^{(2)}_{0,\alpha(\cdot),B} \leq 8\Lambda \mu^2\left(C|u|_{0,D}+\mu^{2\alpha_D^-}[u]^*_{2,\alpha(\cdot),D}\right), \label{inn4}\\
|f|^{(2)}_{0,\alpha(\cdot), B}\leq 8\mu^2|f|_{0,\alpha(\cdot),D}^{(2)}.\label{inn5}
\end{eqnarray}
Substituting inequalities (\ref{inn2})-(\ref{inn5}) into (\ref{inn1}) yields
\begin{equation}\label{schin1}
|F|^{(2)}_{0,\alpha(\cdot),B}\leq C\mu^{2+2\alpha_D^-}[u]^*_{2,\alpha(\cdot),D}+C_1\left(|u|_{0,D}+|f|^{(2)}_{0,\alpha(\cdot),D}\right).
\end{equation}
Therefore, combining (\ref{schin1}) with (\ref{schin}) we conclude that 
\begin{align*}
d_{x_0}^{2+\alpha(x_0)}\frac{|D^2u(x_0)-D^2u(y_0)|}{|x_0-y_0|^{\alpha(x_0)}}\leq \\C\mu^{2\alpha_D^--\alpha_D^+}[u]^*_{2,\alpha(\cdot),D}+
C_1\left(|u|_{0,D}+|f|^{(2)}_{0,\alpha(\cdot),D} \right)+\frac{4}{\mu^{\alpha_D^+}}[u]^*_{2,D}.
\end{align*}
Thus, according to interpolation inequality from Lemma \ref{intlem}, we have
\begin{align*}
d_{x_0}^{2+\alpha(x_0)}\frac{|D^2u(x_0)-D^2u(y_0)|}{|x_0-y_0|^{\alpha(x_0)}}\leq C\mu^{2\alpha_D^--\alpha_D^+}[u]^*_{2,\alpha(\cdot),D}+C_1\left(|u|_{0,D}+|f|^{(2)}_{0,\alpha(\cdot),D} \right).
\end{align*}
Finally, by assumption $2\alpha_D^->\alpha_D^+$ and since $\mu<\frac{1}{2}$ was arbitrary, we can take $\mu$ such that $C\mu^{2\alpha_D^--\alpha_D^+}<1$ and we can include the  $[u]^*_{2,\alpha(\cdot),D}$-term 
into the left-hand side of the above inequality. This completes the proof of Lemma \ref{lemsh}.
\end{proof}
We are now in a position to prove Theorem \ref{sches}. We shall use the covering argument. Let us take the balls $B_j=B(x_j, 2 r_j)$ 
such that on each of these the following inequality $2\alpha^->\alpha^+$ holds. 
Next, let us denote $\widetilde{B}_j= B(x_j, r_j)$. Since $\Omega$ is a relatively compact set, there exists an 
finite open covering $\{\widetilde{B}_j\}_{j=1}^N$ of $\bar{\Omega}$. 
Moreover, since the family $\{B_j\}^N_{j=1}$ is a covering of $\bar{\Omega}$, we can apply Lemma \ref{lemsh} to each of the sets $W_j=B_j\cap\Omega$ and we get
\begin{equation*}
[u]^*_{2,\alpha(\cdot), W_j}\leq C\left(|u|_{0,W_j}+|f|^{(2)}_{0,\alpha(\cdot),W_j}\right)\leq C\left(|u|_{0,\Omega}+|f|^{(2)}_{0,\alpha(\cdot),\Omega}\right).
\end{equation*}
Let us denote by $\delta<1$ a Lebesgue number of cover $\{\widetilde{B}_j\}$ and let $r=\min\{r_j\}$ (see \cite{gg}, \cite{gg1} for similar considerations). 
Then, if $x, y \in\Omega$ are such that $|x-y|\leq\delta$, there exists $j\in \{1,...,N\}$ such that $x, y\in \widetilde{B}_j$. 

Observe that, if $\partial B_j\cap\partial\Omega=\emptyset$, then we obtain the following estimate 
\begin{eqnarray}\label{thin1}
d_{x,y}^{\alpha(x)+2}\frac{|D^2u(x)-D^2u(y)|}{|x-y|^{\alpha(x)}}\leq d_{x,y,j}^{\alpha(x)+2}\frac{d^{2+\alpha(x)}}{r^{\alpha(x)+2}}\frac{|D^2u(x)-D^2u(y)|}{|x-y|^{\alpha(x)}}
\leq C(d,r)[u]^*_{2,\alpha(\cdot), W_j}\\ \leq C\left(|u|_{0,\Omega}+|f|^{(2)}_{0,\alpha(\cdot),\Omega}\right),\nonumber
\end{eqnarray}
where $d_{x,y,j}= \min\{\hbox{dist}(\partial {W}_j, x),\ \hbox{dist}(\partial {W}_j, y)\}$ and $d=\hbox{diam}(\Omega)$.
If $\partial B_j\cap\partial\Omega\neq\emptyset$, then there are two cases:

\begin{itemize}
\item[a)] $d_{x,y}= d_{x,y,j}$ It is clear that we have
$$
d_{x,y}^{\alpha(x)+2}\frac{|D^2u(x)-D^2u(y)|}{|x-y|^{\alpha(x)}}\leq C\left(|u|_{0,\Omega}+|f|^{(2)}_{0,\alpha(\cdot),\Omega}\right).
$$
\item[b)] $d_{x,y}>d_{x,y,j}$ Let us assume that $d_{x,y,j}=d_{x,j}$, then there exists $z\in\partial W_j\setminus\partial\Omega$ such that $|x-z|=d_{x,y,j}$. Hence,
\begin{align*}
d_{x,y,j}=|x-z|>|z-x_j|-|x-x_j|>2r-r=r.
\end{align*}
So, we conclude that $d_{x,y,j}>r$ and this leads us to the same estimation as inequality (\ref{thin1}).
\end{itemize}

Thus, for $|x-y|\leq\delta$ we have
\begin{equation}\label{theq1}
d_{x,y}^{\alpha(x)+2}\frac{|D^2u(x)-D^2u(y)|}{|x-y|^{\alpha(x)}}\leq C\left(|u|_{0,\Omega}+|f|^{(2)}_{0,\alpha(\cdot),\Omega}\right).
\end{equation}
Next, if $|x-y|>\delta$ then we get
\begin{eqnarray*}
  d_{x,y}^{\alpha(x)+2}\frac{|D^2u(x)-D^2u(y)|}{|x-y|^{\alpha(x)}}\leq d_{x,y}^{\alpha(x)+2}\delta^{-\alpha(x)}\left(|D^2u(x)|+|D^2u(y)|\right)\leq C[u]^*_{2, \Omega}.
\end{eqnarray*}
Combining the above inequality with Lemma \ref{intlem} yields
\begin{equation}\label{theq2}
d_{x,y}^{\alpha(x)+2}\frac{|D^2u(x)-D^2u(y)|}{|x-y|^{\alpha(x)}}\leq C\left(|u|_{0,\Omega}+\epsilon[u]^*_{2,\alpha(\cdot),\Omega}\right),
\end{equation}
for arbitrary $\epsilon>0$.
Finally, gathering (\ref{theq1}) and (\ref{theq2}), we get 
$$
d_{x,y}^{\alpha(x)+2}\frac{|D^2u(x)-D^2u(y)|}{|x-y|^{\alpha(x)}}\leq C\left(C(\varepsilon)|u|_{0,\Omega}+|f|^{(2)}_{0,\alpha(\cdot),\Omega}+\epsilon[u]^*_{2,\alpha(\cdot),\Omega}\right).
$$
Taking $\epsilon$ such that $C\epsilon<1$ we complete the proof of Theorem \ref{sches}.
\end{proof}
As a corollary we get the following lemma.
\begin{lem}\label{pomlem}
Let  $\Omega\subset \mathbb{R}^n$ be an open and bounded set and let $D\subset\subset \Omega$ and $d\leq \hbox{dist}(D, \partial\Omega)$. 
Let us also assume that $u\in C^{2,\alpha(\cdot)}(\Omega)$ and $f\in C^{\alpha(\cdot)}(\bar{\Omega})$ satisfy $Lu=f$, 
where $L$ is an elliptic operator with coefficients in $C^{\alpha(\cdot)}(\Omega)$, then the following inequality
\begin{align*}
d|Du|_{0,D}+d^2|D^2u|_{0,D}+\min\{d^{2+\alpha^+},\ d^{2+\alpha^-}\}[D^2u]_{0,\alpha(\cdot),D}\leq C\left(|u|_{0,\Omega}+|f|_{0,\alpha(\cdot),\Omega}\right)
\end{align*}
is satisfied, where $C=C(\textup{diam}(\Omega), n, \alpha^-, \alpha^+, c_{\log}(\alpha), \Lambda, \lambda)$.
\end{lem}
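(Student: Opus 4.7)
The plan is to derive this cleanly from Theorem \ref{sches} by comparing the weighted starred seminorms on $\Omega$ with the unweighted seminorms on the interior set $D$. The observation driving everything is that for $x\in D$ one has $d_x = \mathrm{dist}(x,\partial\Omega) \geq \mathrm{dist}(D,\partial\Omega) \geq d$, while for $x,y\in D$ one also has $d_{x,y}\geq d$.

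First I would handle the lower-order terms. Since $d\leq d_x$ for every $x\in D$, for any multi-index $\beta$ with $|\beta|=k\in\{1,2\}$ we have
\[
d^{k}|D^{\beta}u(x)| \leq d_x^{k}|D^{\beta}u(x)| \leq [u]^{*}_{k,\Omega},
\]
so taking sup over $x\in D$ yields $d|Du|_{0,D}+d^{2}|D^{2}u|_{0,D}\leq [u]^{*}_{1,\Omega}+[u]^{*}_{2,\Omega}\leq |u|^{*}_{2,\alpha(\cdot),\Omega}$.

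Next I would handle the H\"older seminorm term, which is where the only subtlety lies: the exponent $\alpha(x)$ is variable while the weight $d$ is a single number. For $x,y\in D$ and $|\beta|=2$, we have $d_{x,y}\geq d$, so
\[
d^{2+\alpha(x)}\,\frac{|D^{\beta}u(x)-D^{\beta}u(y)|}{|x-y|^{\alpha(x)}}\;\leq\; d_{x,y}^{2+\alpha(x)}\,\frac{|D^{\beta}u(x)-D^{\beta}u(y)|}{|x-y|^{\alpha(x)}}\;\leq\; [u]^{*}_{2,\alpha(\cdot),\Omega}.
\]
Now I need to replace the exponent $\alpha(x)$ by a uniform one. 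If $d\leq 1$, then $d^{\alpha(x)}\geq d^{\alpha^{+}}$, hence $d^{2+\alpha^{+}}\leq d^{2+\alpha(x)}$; if $d>1$, then $d^{\alpha(x)}\geq d^{\alpha^{-}}$, hence $d^{2+\alpha^{-}}\leq d^{2+\alpha(x)}$. In either case $\min\{d^{2+\alpha^{+}},d^{2+\alpha^{-}}\}\leq d^{2+\alpha(x)}$, so
\[
\min\{d^{2+\alpha^{+}},d^{2+\alpha^{-}}\}\,[D^{2}u]_{0,\alpha(\cdot),D}\;\leq\; [u]^{*}_{2,\alpha(\cdot),\Omega}.
\]

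Combining these observations, the entire left-hand side of the claimed inequality is bounded by $|u|^{*}_{2,\alpha(\cdot),\Omega}$. I would then invoke Theorem \ref{sches} to get $|u|^{*}_{2,\alpha(\cdot),\Omega}\leq C(|u|_{0,\Omega}+|f|^{(2)}_{0,\alpha(\cdot),\Omega})$, and finally convert the weighted norm on $f$ into an ordinary H\"older norm using $d_x\leq\mathrm{diam}(\Omega)$: since $d_x^{2}\leq\mathrm{diam}(\Omega)^{2}$ and $d_{x,y}^{2+\alpha(x)}\leq \max\{\mathrm{diam}(\Omega)^{2+\alpha^{+}},\mathrm{diam}(\Omega)^{2+\alpha^{-}}\}$, we have $|f|^{(2)}_{0,\alpha(\cdot),\Omega}\leq C(\mathrm{diam}(\Omega),\alpha^{\pm})\,|f|_{0,\alpha(\cdot),\Omega}$. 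Chaining these three inequalities gives the lemma. The only mild obstacle is correctly managing the variable exponent when passing from $d^{\alpha(x)}$ to $d^{\alpha^{\pm}}$, which is precisely why the $\min$ appears on the left-hand side; all other steps are direct comparisons of the starred and unstarred norms.
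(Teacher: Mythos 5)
Your proposal is correct and follows essentially the same route as the paper: invoke Theorem~\ref{sches}, then use $d_x\geq d$ and $d_{x,y}\geq d$ on $D$ to bound the weighted starred norms from below by the stated left-hand side, with the $\min\{d^{2+\alpha^+},d^{2+\alpha^-}\}$ factor handling the uncertainty between $d\leq 1$ and $d>1$. Your argument is in fact slightly more careful than the paper's: you explicitly justify $\min\{d^{2+\alpha^+},d^{2+\alpha^-}\}\leq d^{2+\alpha(x)}$ and explicitly convert $|f|^{(2)}_{0,\alpha(\cdot),\Omega}$ to $|f|_{0,\alpha(\cdot),\Omega}$ via $d_x,d_{x,y}\leq\mathrm{diam}(\Omega)$, whereas the paper leaves both of these steps implicit.
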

\begin{proof}
Since  the coefficients of $L$ satisfies the assumptions of Theorem \ref{sches}, we obtain
\begin{equation*}
|u|^*_{2,\alpha(\cdot), \Omega}\leq C\left(|u|_{0, \Omega}+|f|^{(2)}_{0,\alpha(\cdot), \Omega}\right).
\end{equation*}
Moreover, by the very definitions of the norms we get 
\begin{align*}
|u|^*_{2,\alpha(\cdot), \Omega}\geq|Du|^*_{0,\Omega}+|D^2u|^*_{0,\Omega}+|D^2u|^*_{0,\alpha(\cdot),\Omega}\geq\\ d|Du|_{0,D}+d^2|D^2u|_{0,D}+\min\{d^{2+\alpha^+},\ d^{2+\alpha^-}\}[D^2u]_{0,\alpha(\cdot),D}.
\end{align*}
This completes the proof of the lemma.
\end{proof}
By the same methods as in the proof of Theorem \ref{sches} we obtain.
\begin{theo}\label{schesbrz}
Let $\Omega\subset\mathbb{R}^n_+$ be an open and bounded set with boundary portion $T$ in $\{x_n=0\}$. 
If $u\in C^{2,\alpha(\cdot)}(\bar{\Omega)}$ satisfies
\begin{equation*}\
Lu=f,\quad u|_{T}=0,
\end{equation*}
where $f\in C^{\alpha(\cdot)}(\bar{\Omega})$ and there are positive constants $\lambda$ and $\Lambda$ such that
\begin{align*}
&a^{ij}(x)\zeta^i\zeta^j\geq\lambda|\zeta|^2\qquad\textrm{for all $x\in\Omega$ and for all $\zeta\in\mathbb{R}^n$,}\\
&|a^{ij}|_{0,\alpha(\cdot),\Omega\cup T}^{(0)},|b^i|_{0,\alpha(\cdot),\Omega\cup T}^{(1)},|c|^{(2)}_{0,\alpha(\cdot),\Omega\cup T}\leq \Lambda,
\end{align*} then
\begin{equation*}
|u|^*_{2,\alpha(\cdot), \Omega\cup T}\leq C\left(|u|_{0,\Omega}+|f|^{(2)}_{0,\alpha(\cdot), \Omega\cup T}\right),
\end{equation*}
where $C=C(\textup{diam}(\Omega), n, \alpha^-, \alpha^+, c_{\log}(\alpha), \Lambda, \lambda)$.
\end{theo}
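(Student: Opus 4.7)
The plan is to mirror the proof of Theorem \ref{sches} almost step by step, but with all norms replaced by the $\Omega\cup T$-variants (using $\bar d_x$ in place of $d_x$), and with Lemma \ref{lemful1}(a) replaced by its boundary counterpart Lemma \ref{lemful1}(b) at every application. The crucial observation that makes this substitution harmless is that $\bar d_{x_0} = \hbox{dist}(x_0,\partial\Omega\setminus T)$, so for $\mu\le 1$ the ball $B = B(x_0, \mu \bar d_{x_0})$ never touches $\partial\Omega\setminus T$; it either lies entirely in $\Omega$ (reducing to the interior argument) or intersects $\Omega$ in a half-ball-like region whose flat boundary piece sits on $T$, where $u=0$ holds by hypothesis. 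Thus Lemma \ref{lemful1}(b) is exactly the tool we need at the freezing step.

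First I would prove the local analogue of Lemma \ref{lemsh}: for any open $D\subset\Omega$ with $2\alpha^-_D>\alpha^+_D$,
\begin{equation*}
|u|^*_{2,\alpha(\cdot),D\cup T}\le C\bigl(|u|_{0,D}+|f|^{(2)}_{0,\alpha(\cdot),D\cup T}\bigr).
\end{equation*}
Pick $x_0,y_0\in D$ with $\bar d_{x_0}\le \bar d_{y_0}$, set $d=\mu\bar d_{x_0}$ with $\mu\le\tfrac12$, and freeze coefficients at $x_0$:
\begin{equation*}
a^{ij}(x_0)D_{ij}u=\bigl(a^{ij}(x_0)-a^{ij}\bigr)D_{ij}u-b^iD_iu-cu+f=:F.
\end{equation*}
Apply Lemma \ref{lemful1}(b) on $B(x_0,d)\cap\Omega$ (using $u=0$ on $T$) to bound the difference quotient of $D^2u$ at $(x_0,y_0)$ when $y_0$ lies in the half-ball of radius $d/2$; treat $y_0$ outside that half-ball by the crude triangle-inequality estimate, exactly as in the interior proof. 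Then estimate $|F|^{(2)}_{0,\alpha(\cdot),B\cup T}$ by a boundary analogue of Proposition \ref{p1}, which gives
\begin{equation*}
[g]^{(2)}_{0,\alpha(\cdot),B\cup T}\le 8\mu^{2+\alpha_D^-}[g]^{(2)}_{0,\alpha(\cdot),D\cup T},\qquad |g|^{(2)}_{0,B\cup T}\le 4\mu^2 |g|^{(2)}_{0,D\cup T},
\end{equation*}
combined with the interpolation lemma (Lemma \ref{intlem}). The freezing estimate $|a^{ij}(x_0)-a^{ij}|^{(0)}_{0,\alpha(\cdot),B\cup T}\le C\mu^{\alpha_D^-}\Lambda$ goes through verbatim because $a^{ij}$ is controlled in $C^{\alpha(\cdot)}(\Omega\cup T)$ by hypothesis. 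Choosing $\mu$ small enough so that the $[u]^*_{2,\alpha(\cdot),D\cup T}$-contribution on the right is absorbed into the left (this uses $2\alpha_D^->\alpha_D^+$) completes the local estimate.

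Next I would run the covering argument of Theorem \ref{sches}. Cover $\bar\Omega$ by finitely many pairs $\widetilde B_j=B(x_j,r_j)\subset B_j=B(x_j,2r_j)$ chosen small enough that $2\alpha^-_{B_j\cap\Omega}>\alpha^+_{B_j\cap\Omega}$, apply the local lemma to each $W_j=B_j\cap\Omega$ (using $u=0$ on $T\cap B_j$ when relevant), and split the two cases $|x-y|\le\delta$ and $|x-y|>\delta$ with $\delta$ a Lebesgue number of $\{\widetilde B_j\}$. In the first case both points share some $\widetilde B_j$ and the local estimate applies, once one checks, as in \eqref{thin1} and the subsequent case analysis, that either $\bar d_{x,y}$ is comparable to the intrinsic distance $\bar d_{x,y,j}$ to $\partial W_j\setminus T$ or else $\bar d_{x,y,j}>r$, which again gives the desired bound. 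The case $|x-y|>\delta$ is handled by the trivial triangle-inequality bound and Lemma \ref{intlem}, and absorption finishes the proof.

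The main obstacle I anticipate is purely book-keeping: one has to verify that the starred $\Omega\cup T$ norms behave under restriction to a ball $B=B(x_0,d)$ exactly as the starred $\Omega$ norms do in Proposition \ref{p1}, which requires keeping track of two different boundary pieces ($T$ and $\partial\Omega\setminus T$) and observing that near $T$ the relevant distance does not collapse. Everything else — the freezing-coefficients identity, the interpolation, the covering, the Lebesgue-number trichotomy — is a literal transcription of the interior proof with Lemma \ref{lemful1}(a) swapped out for Lemma \ref{lemful1}(b).
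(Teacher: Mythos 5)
Your proposal is correct and follows the same approach the paper intends: the paper gives no separate proof of Theorem~\ref{schesbrz}, stating only that it follows ``by the same methods as in the proof of Theorem~\ref{sches},'' and your write-up spells out exactly that adaptation (replace $d_x$ by $\bar d_x$, swap Lemma~\ref{lemful1}(a) for~\ref{lemful1}(b), and re-run the freezing, interpolation, and covering steps with $\Omega\cup T$ norms). One small citation slip: the interpolation inequalities you invoke should be the boundary version, Lemma~\ref{intlembrz}, not the interior Lemma~\ref{intlem}.
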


Now, we can formulate the following lemma.
\begin{lem}\label{schesbrzex}
Let $\Omega\subset\mathbb{R}^n$ be an open and bounded set of class $C^{2,\alpha^+}$. If $u\in C^{2,\alpha(\cdot)}(\bar{\Omega})$ satisfies
\begin{align*}
\left\{\begin{array}{l}
Lu=f \textrm{in $\Omega$,}\\
u=0\textrm{ on $\partial\Omega$,}
\end{array}\right.
\end{align*}
where $f\in C^{\alpha(\cdot)}(\bar{\Omega})$ and there are positive constants $\lambda$ and $\Lambda$ such that
\begin{align*}
&a^{ij}(x)\zeta^i\zeta^j\geq\lambda|\zeta|^2\qquad\textrm{for all $x\in\Omega$ and for all $\zeta\in\mathbb{R}^n$,}\\
&|a^{ij}|_{0,\alpha(\cdot),\Omega},|b^i|_{0,\alpha(\cdot),\Omega},|c|_{0,\alpha(\cdot),\Omega}\leq \Lambda,
\end{align*} 
then there exists $\rho$ such that for each balls $B=B(x_0,\rho)$, where $x_0\in\partial\Omega$, the following estimate
\begin{equation*}
|u|_{2,\alpha(\cdot), \Omega\cap B}\leq C\left(|u|_{0,\Omega}+|f|_{0,\alpha(\cdot), \Omega}\right)
\end{equation*}
holds, where $C=C(\Omega, n, \alpha^-, \alpha^+, c_{\log}(\alpha), \Lambda, \lambda)$.
\end{lem}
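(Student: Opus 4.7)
The plan is to reduce the boundary estimate to the half-space/flat-boundary estimate of Theorem \ref{schesbrz} via a local $C^{2,\alpha^+}$ straightening of the boundary. Since $\partial\Omega$ is of class $C^{2,\alpha^+}$, for each $x_0\in\partial\Omega$ there exists a neighborhood $U$ of $x_0$ and a $C^{2,\alpha^+}$ diffeomorphism $\Phi\colon U\to V$ onto a neighborhood of the origin in $\mathbb{R}^n$ such that $\Phi(U\cap\Omega)\subset\mathbb{R}^n_+$ and $\Phi(U\cap\partial\Omega)\subset T=\{y_n=0\}$. Set $\Omega'=\Phi(U\cap\Omega)$, $T'=\Phi(U\cap\partial\Omega)$, $\tilde{u}=u\circ\Phi^{-1}$, $\tilde{f}=f\circ\Phi^{-1}$ and $\tilde{\alpha}=\alpha\circ\Phi^{-1}$. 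A standard chain-rule computation shows that $\tilde{u}$ satisfies an equation $\tilde{L}\tilde{u}=\tilde{f}$ on $\Omega'$ with $\tilde{u}=0$ on $T'$, where the coefficients $\tilde{a}^{ij}$, $\tilde{b}^i$, $\tilde{c}$ are polynomials in the coefficients of $L$ (composed with $\Phi^{-1}$) and the first and second derivatives of $\Phi$ and $\Phi^{-1}$.

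The first key step is to check that the transformed problem falls within the scope of Theorem \ref{schesbrz}. Ellipticity is preserved: $\tilde{a}^{ij}(y)\zeta_i\zeta_j=a^{kl}(\Phi^{-1}y)(D\Phi)^i_k\zeta_i (D\Phi)^j_l\zeta_j\geq \lambda|(D\Phi)^T\zeta|^2$, which is bounded below by $\tilde\lambda|\zeta|^2$ with $\tilde\lambda>0$ since $D\Phi$ is non-singular with continuous entries. Second, $\Phi$ is bi-Lipschitz, so $\tilde\alpha\in\mathcal{A}^{\log}(\Omega')$: from $|\Phi(x)-\Phi(y)|\asymp|x-y|$ one checks $|\ln|\Phi x-\Phi y||\cdot|\tilde\alpha(\Phi x)-\tilde\alpha(\Phi y)|$ is bounded by $c_{\log}(\alpha)$ plus a $\Phi$-dependent constant. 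Third, the $C^{\alpha(\cdot)}$ norms of the coefficients transform up to a multiplicative constant: since $\Phi\in C^{2,\alpha^+}\subset C^{2,\alpha(\cdot)}$ and $\alpha^+\geq\alpha(x)$, composition with $\Phi^{-1}$ preserves the $C^{\alpha(\cdot)}$ class (the Hölder quotient is transported using Lipschitz bounds on $\Phi^{\pm1}$), and products and sums of such coefficients remain in $C^{\alpha(\cdot)}$; hence $|\tilde{a}^{ij}|^{(0)}_{0,\tilde\alpha(\cdot),\Omega'\cup T'}$, $|\tilde b^i|^{(1)}_{0,\tilde\alpha(\cdot),\Omega'\cup T'}$, $|\tilde c|^{(2)}_{0,\tilde\alpha(\cdot),\Omega'\cup T'}$ are bounded by a constant $\tilde\Lambda$ depending on $\Lambda$ and on the $C^{2,\alpha^+}$ norm of $\Phi$.

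Applying Theorem \ref{schesbrz} to $\tilde u$ on $\Omega'$ yields
\[
|\tilde u|^*_{2,\tilde\alpha(\cdot),\Omega'\cup T'}\leq C\Big(|\tilde u|_{0,\Omega'}+|\tilde f|^{(2)}_{0,\tilde\alpha(\cdot),\Omega'\cup T'}\Big).
\]
Now I would pick $r>0$ small enough that the set $W'=B(0,r)\cap\overline{\mathbb{R}^n_+}$ is compactly contained in $\Omega'\cup T'$, so the distance $\bar d_y$ (to $\partial\Omega'\setminus T'$) is bounded below by a fixed positive constant on $W'$. On $W'$ the weighted seminorms $|\cdot|^*_{2,\tilde\alpha(\cdot),\cdot}$ then dominate the unweighted norm $|\cdot|_{2,\tilde\alpha(\cdot),\cdot}$. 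Pulling back via $\Phi$, the corresponding bi-Lipschitz, $C^{2,\alpha^+}$ comparison of norms (as used already above) yields
\[
|u|_{2,\alpha(\cdot),\Omega\cap B(x_0,\rho)}\leq C\Big(|u|_{0,\Omega}+|f|_{0,\alpha(\cdot),\Omega}\Big)
\]
for $\rho$ chosen small enough that $\Omega\cap B(x_0,\rho)\subset \Phi^{-1}(W')$; by compactness of $\partial\Omega$ one such $\rho$ works uniformly.

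The main obstacle is the third verification in the second paragraph: carefully tracking how the variable Hölder seminorms $|\cdot|^{(k)}_{0,\alpha(\cdot),\Omega}$ transform under $\Phi$. The issue is that both the base point and the exponent shift under composition, so one must show that replacing $\alpha(x)$ by $\tilde\alpha(\Phi x)$ and $|x-y|$ by $|\Phi x-\Phi y|$ only changes the seminorm by a multiplicative constant — this is precisely where the log-Hölder continuity of $\alpha$ (together with the Lipschitz character of $\Phi^{\pm 1}$) is essential, paralleling the way $\delta^{\alpha(\bar x)-\alpha(x)}$ was handled in Lemma \ref{lem1}. Once this bookkeeping is done, the rest of the argument is a bi-Lipschitz change of variables plus an application of Theorem \ref{schesbrz}.
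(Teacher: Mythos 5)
Your proposal is correct and follows essentially the same route as the paper's proof: local boundary straightening by a $C^{2,\alpha^+}$ diffeomorphism $\Phi$, verification that the transformed equation still satisfies the hypotheses of Theorem \ref{schesbrz} (ellipticity, log-H\"{o}lder continuity of $\tilde\alpha$, coefficient bounds), application of that theorem, pullback via the bi-Lipschitz change of variables, localization to a smaller half-ball so that the weighted norm controls the unweighted one, and a compactness argument on $\partial\Omega$ to get a uniform $\rho$. You have also correctly flagged the genuine technical point — the comparability of the weighted variable-H\"{o}lder seminorms under $\Phi$, established via the log-H\"{o}lder condition and the Lipschitz bounds on $\Phi^{\pm1}$ — which the paper handles by reference to the analogous estimates (\ref{pomin2})--(\ref{pomin3}) derived in Lemma \ref{lemful1}.
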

\begin{proof}
Let us fix $x_0\in\partial\Omega$. Since $\partial \Omega$ is of class $C^{2,\alpha^+}$, there exist $\rho_{x_0}>0$ and mapping $\Phi\colon B(x_0,\rho_{x_0})\to\mathbb{R}^n$ of class $C^{2,\alpha^+}$ such that $\Phi(\partial\Omega \cap B(x_0,\rho_{x_0}))\subset\{x_n=0\}$ and $\Phi(\Omega\cap B(x_0,\rho_{x_0}))\subset\mathbb{R}^n_+$.
 Let us introduce the notation $B'=B(x_0,\rho_{x_0})\cap\Omega$, $\Phi(B')=D'$, $T=B(x_0,\rho_{x_0})\cap\partial\Omega$ and $T'=\Phi(T)$. 
Let moreover define $\tilde{u}=u\circ\Phi^{-1}$ and $\tilde{f}=f\circ\Phi^{-1}$ on the set $D'$. Then $\tilde{u}$ satisfies the equation
\begin{align*}
\widetilde{L}\tilde{u}=\tilde{a}^{ij}D_{ij}\tilde{u}+\tilde{b}^iD_i\tilde{u}+\tilde{c}\tilde{u}=\tilde{f},
\end{align*}
where
\begin{eqnarray*}
\tilde{a}^{ij}=\left(\sum_{k,l=1}^na^{kl}D_l\Phi^jD_k\Phi^i\right)\circ\Phi^{-1}, \quad 
\tilde{b}^i=\left(\sum_{k,l=1}^nD_{lk}\Phi^ia^{lk}+\sum_{k=1}^nb^kD_k\Phi^i\right)\circ\Phi^{-1},\quad
\tilde{c}=c\circ\Phi^{-1}.
\end{eqnarray*}
It is easy to see that there exists constant $K>0$ that
\begin{align*}
K^{-1}|x-y|\leq |\Phi(x)-\Phi(y)|\leq K|x-y|.
\end{align*}
Consequently, similarly as in the proof o Lemma \ref{lemful1}, we obtain
\begin{equation}\label{pominbrz1}
\begin{split}
C_2|v|^*_{k,{\alpha}(\cdot),B'}\leq|\tilde{v}|^*_{k,\tilde{\alpha}(\cdot),D'}\leq C_1|v|^*_{k,{\alpha}(\cdot),B'}\\
C_2|v|^{(l)}_{k,\alpha(\cdot),B'}\leq|\tilde{v}|^{(l)}_{k,\tilde{\alpha}(\cdot),D'}\leq C_1|v|^{(l)}_{k,\alpha(\cdot),B'},
\end{split}
\end{equation}
and
\begin{equation}\label{pominbrz}
\begin{split}
C_2|v|^*_{k,\alpha(\cdot),B'\cup T}\leq|\tilde{v}|^*_{k,\tilde{\alpha}(\cdot),D'\cup T'}\leq C_1|v|^*_{k,\alpha(\cdot),B'\cup T}\\
C_2|v|^{(l)}_{k,\alpha(\cdot),B'\cup T}\leq|\tilde{v}|^{(l)}_{k,\tilde{\alpha}(\cdot),D'\cup{T}'}\leq C_1|v|^{(l)}_{k,\alpha(\cdot),B'\cup T},
\end{split}
\end{equation}
for $k,\ l=0,\ 1,\ 2,\ 3,\ldots $, where $v$ is a certain function, $\tilde{v}=v\circ\Phi^{-1}$ and $\tilde{\alpha}=\alpha\circ\Phi^{-1}$. 

Hence, we see that 
\begin{align*}
|\tilde{a}^{ij}|_{0,\tilde{\alpha}(\cdot),D'},|\tilde{b}^i|_{0,\tilde{\alpha}(\cdot),D'},|\tilde{c}|_{0,\tilde{\alpha}(\cdot),D'}\leq \widetilde{\Lambda}=C\Lambda.
\end{align*}
Thus, by virtue of Theorem \ref{schesbrz} we get
\begin{equation*}
|\tilde{u}|^*_{2,\tilde{\alpha}(\cdot), D'\cup T'}\leq C\left(|\tilde{u}|_{0,D'}+|\tilde{f}|_{0,\tilde{\alpha}(\cdot),D'\cup T'}^{(2)}\right).
\end{equation*}
Therefore, from (\ref{pominbrz1}) and (\ref{pominbrz}) we obtain
\begin{align}\label{schesexin1}
|u|^*_{2,\alpha(\cdot),B'\cup T}\leq |\tilde{u}|^*_{2,\tilde{\alpha}(\cdot), D'\cup T'}\leq C\left(|\tilde{u}|_{0,D'}+|\tilde{f}|_{0,\tilde{\alpha}(\cdot),D'\cup T'}^{(2)}\right)\leq \\
 C\left(|u|_{0,B'}+|f|^{(2)}_{0,\alpha(\cdot),B' \cup T}\right)\leq  C\left(|u|_{0,B'}+|f|_{0,\alpha(\cdot),B'}\right) \leq C\left(|u|_{0,\Omega}+|f|_{0,\alpha(\cdot),\Omega}\right).\nonumber
\end{align}
Next, let $B''=B\left(x_0,\frac{\rho_{x_0}}{2}\right)\cap\Omega$. We see that $\bar{d}_x, \bar{d}_{x,y}\geq \frac{\rho_{x_0}}{2}$ for all $x,\ y\in B'$ and thus we conclude
\begin{align*}
C(\rho_{x_0})|u|_{2,\alpha(\cdot),B''}\leq|u|^*_{2,\alpha(\cdot), B'\cup T}.
\end{align*}
According to inequality (\ref{schesexin1}), we have
\begin{align}\label{schesexin2}
|u|_{2,\alpha(\cdot),B''} \leq C(\rho_{x_0})\left(|u|_{0,\Omega}+|f|_{0,\alpha(\cdot),\Omega}\right).
\end{align}
Now, let us take the covering  $\{B(x,\frac{\rho_x}{4})\}_{x\in\partial\Omega}$ of the boundary $\partial\Omega$. 
Since $\partial\Omega$ is compact, we can take a finite cover  $\{B(x_i,\frac{\rho_i}{4})\}_{i=1}^{N}$ of $\partial\Omega$. Let $\rho=\min\frac{\rho_i}{4}$, 
if we take an arbitrary $x_0\in\partial\Omega$, then $x_0\in B(x_i, \frac{\rho_i}{4})$ for some $i$. It is easy to see that $B(x_0,\rho)\subset B(x_i, \frac{\rho_i}{2})$ 
and since for $B''=B(x_i,\frac{\rho_i}{2})\cap\Omega$ inequality (\ref{schesexin2}) holds, thus the proof follows.
\end{proof}
Our next main result is the following claim.
\begin{theo}[Global Schauder estimates]\label{schesend}
Let $\Omega\subset\mathbb{R}^n$ be an open and bounded set of class $C^{2,\alpha^+}$. If $u\in C^{2,\alpha(\cdot)}(\bar{\Omega})$ satisfies
\begin{align*}\label{probl}
\left\{\begin{array}{l}
Lu=f \textrm{in $\Omega$,}\\
u=\phi\textrm{ on $\partial\Omega$,}
\end{array}\right.
\end{align*}
where $f\in C^{\alpha(\cdot)}(\bar{\Omega})$, $\phi\in C^{2,\alpha(\cdot)}(\bar{\Omega})$ and there are positive constants $\lambda$ and $\Lambda$ such that
\begin{align*}
&a^{ij}(x)\zeta^i\zeta^j\geq\lambda|\zeta|^2\qquad\textrm{for all $x\in\Omega$ and for all $\zeta\in\mathbb{R}^n$,}\\
&|a^{ij}|_{0,\alpha(\cdot),\Omega},|b^i|_{0,\alpha(\cdot),\Omega},|c|_{0,\alpha(\cdot),\Omega}\leq \Lambda,
\end{align*}
 then the following inequality  
\begin{equation}\label{tezsches}
|u|_{2,\alpha(\cdot), \Omega}\leq C\left(|u|_{0,\Omega}+|f|_{0,\alpha(\cdot), \Omega}+|\phi|_{2,\alpha(\cdot), \Omega}\right)
\end{equation}
is satisfied, where $C=C(\Omega, n, \alpha^-, \alpha^+, c_{\log}(\alpha), \Lambda, \lambda)$.
\end{theo}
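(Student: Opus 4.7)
The plan is to reduce to homogeneous boundary data and then patch the boundary-neighborhood estimate of Lemma~\ref{schesbrzex} with the interior estimate of Theorem~\ref{sches} via a finite covering of $\bar\Omega$.

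\textbf{Reduction.} Set $v=u-\phi$; then $v=0$ on $\partial\Omega$ and $Lv=f-L\phi=:g$ in $\Omega$. Since the coefficients of $L$ lie in $C^{\alpha(\cdot)}(\bar\Omega)$ with norm at most $\Lambda$, and $C^{\alpha(\cdot)}(\bar\Omega)$ is a Banach algebra (apply the identity $fg(x)-fg(y)=f(x)(g(x)-g(y))+g(y)(f(x)-f(y))$ together with the triangle inequality), one obtains
\[
|g|_{0,\alpha(\cdot),\Omega}\le|f|_{0,\alpha(\cdot),\Omega}+C\Lambda|\phi|_{2,\alpha(\cdot),\Omega},\qquad |v|_{0,\Omega}\le|u|_{0,\Omega}+|\phi|_{0,\Omega}.
\]
Thus it is enough to prove (\ref{tezsches}) for $v$ with vanishing boundary data and $g$ in place of $f$; reverting $v=u-\phi$ in the end contributes the term $|\phi|_{2,\alpha(\cdot),\Omega}$ on the right.

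\textbf{Local estimates and covering.} By Lemma~\ref{schesbrzex} applied to $v$, there exists $\rho>0$ (depending on $\Omega$ and the coefficient bounds) such that for every $x_0\in\partial\Omega$,
\[
|v|_{2,\alpha(\cdot),\Omega\cap B(x_0,\rho)}\le C\bigl(|v|_{0,\Omega}+|g|_{0,\alpha(\cdot),\Omega}\bigr).
\]
Compactness of $\partial\Omega$ produces a finite subcover $\{B(x_i,\rho/2)\}_{i=1}^N$ of $\partial\Omega$. Let $K=\bar\Omega\setminus\bigcup_{i=1}^N B(x_i,\rho/2)$, a compact subset of $\Omega$ with $\mathrm{dist}(K,\partial\Omega)\ge\rho/2$. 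On $K$ the weight $d_x$ is uniformly bounded below, so Theorem~\ref{sches} (equivalently, Lemma~\ref{pomlem}) yields
\[
|v|_{2,\alpha(\cdot),K}\le C(\rho)\,|v|^*_{2,\alpha(\cdot),\Omega}\le C(\rho)\bigl(|v|_{0,\Omega}+|g|_{0,\alpha(\cdot),\Omega}\bigr).
\]

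\textbf{Global assembly.} Any pair of points $x,y\in\bar\Omega$ either lies inside a single patch (one of the sets $\Omega\cap B(x_i,\rho)$ or $K$), in which case the corresponding local estimate directly controls the H\"older quotient of $D^2v$, or satisfies $|x-y|\ge\rho/4$, in which case the quotient is bounded by $(\rho/4)^{-\alpha^+}\bigl(|D^2v(x)|+|D^2v(y)|\bigr)$ and hence by the sup norm $[v]_{2,0,\Omega}$, itself controlled by the patch estimates. Taking suprema and summing the finitely many contributions gives $|v|_{2,\alpha(\cdot),\Omega}\le C\bigl(|v|_{0,\Omega}+|g|_{0,\alpha(\cdot),\Omega}\bigr)$; reinserting the bounds on $|v|_{0,\Omega}$ and $|g|_{0,\alpha(\cdot),\Omega}$ from the reduction step produces (\ref{tezsches}).

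The main technical obstacle is the Banach-algebra product estimate in $C^{\alpha(\cdot)}(\bar\Omega)$, which is used to absorb $L\phi$ into $|\phi|_{2,\alpha(\cdot),\Omega}$; although elementary, it must be verified uniformly in the variable exponent, which is where the log-H\"older regularity enters implicitly through the finiteness of $\alpha^+$ and positivity of $\alpha^-$. Once this is in hand, the covering/patching bookkeeping is the same mechanism as in the final stage of the proof of Theorem~\ref{sches}.
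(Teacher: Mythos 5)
Your proposal follows essentially the same strategy as the paper's proof: reduce to $\phi\equiv0$, use Lemma~\ref{schesbrzex} near $\partial\Omega$ and the interior estimate away from it, and patch the two via a case analysis on pairs of points. The paper is terser about the reduction (it just states ``We can assume that $\phi\equiv0$''), while you spell out the Banach-algebra bookkeeping; conversely, the paper avoids introducing a fixed compact set $K$ and instead splits cases by whether $d_{x,y}\geq\rho/2$, whether $x,y$ share a boundary ball $B(x_0,\rho)$, or whether they are forced to be $\rho/2$-separated.

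There is one concrete inaccuracy in your covering step. You take a finite subcover $\{B(x_i,\rho/2)\}_{i=1}^N$ of $\partial\Omega$, set $K=\bar\Omega\setminus\bigcup_i B(x_i,\rho/2)$, and claim $\mathrm{dist}(K,\partial\Omega)\geq\rho/2$. That does not follow: covering $\partial\Omega$ by balls of radius $\rho/2$ does not cover the $\rho/2$-neighborhood of $\partial\Omega$. If $z\in K$ with nearest boundary point $z_0\in B(x_i,\rho/2)$, all one gets is $|z-z_0|\geq|z-x_i|-|x_i-z_0|\geq\rho/2-|x_i-z_0|$, which may be arbitrarily small. The argument still closes because $K$ is compact in $\Omega$ and hence $\mathrm{dist}(K,\partial\Omega)=\delta>0$ with $\delta$ depending only on $\Omega$ and $\rho$; but the explicit lower bound $\rho/2$ is unjustified. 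A clean fix is to cover $\partial\Omega$ by $\{B(x_i,\rho/4)\}$ and set $K=\bar\Omega\setminus\bigcup_i B(x_i,\rho/2)$, which does give $\mathrm{dist}(K,\partial\Omega)\geq\rho/4$; alternatively, argue by cases on $d_{x,y}$ as the paper does, avoiding $K$ entirely. Lastly, your closing remark misplaces the role of the log-H\"older condition: the product estimate $|gh|_{0,\alpha(\cdot),\Omega}\leq|g|_{0,\alpha(\cdot),\Omega}|h|_{0,\alpha(\cdot),\Omega}$ follows directly from the definition of the seminorm and does not use log-H\"older continuity; that hypothesis is needed in the ingredients you invoke (Theorem~\ref{sches} and Lemma~\ref{schesbrzex}), not here.
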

\begin{proof}
We can assume that $\phi \equiv 0$. Now, let us take $\rho>0$ from Theorem \ref{schesbrzex} and let $x\in \Omega$. Then, by 
Lemma \ref{schesbrzex} and by Lemma \ref{pomlem} we get 
\begin{align*}
|Du(x)|+|D^2u(x)|\leq C\left(|u|_{0,\Omega}+|f|_{0,\alpha(\cdot), \Omega}\right).
\end{align*}
Next, let us take two points $x, y\in\Omega$. 

If $x,y\in B(x_0,\rho)$ for some $x_0\in\partial\Omega$, then  we apply Lemma \ref{schesbrzex} and if $d_{x,y} \geq\frac{\rho}{2}$ we use Lemma \ref{pomlem} getting the 
following estimate  
\begin{align*}
\frac{|D^2u(x)-D^2u(y)|}{|x-y|^{\alpha(x)}}\leq C\left(|u|_{0,\Omega}+|f|_{0,\alpha(\cdot), \Omega}\right).
\end{align*}

If $d_x < \frac{\rho}{2}$ and for any $x_0\in\partial\Omega$,  $x,y\notin B(x_0,\rho)$, then there exists 
$x_0\in\partial\Omega$ such that $x\in B(x_0, \frac{\rho}{2})$. Hence, we have
\begin{align*}
  |x-y|\geq |y-x_0|-|x-x_0|\geq\rho-\frac{\rho}{2}=\frac{\rho}{2},
\end{align*}
in this way we get
\begin{align*}
 \frac{|D^2u(x)-D^2u(y)|}{|x-y|^{\alpha(x)}}\leq\left(\frac{2}{\rho}\right)^{\alpha(x)}\left(|D^2u(x)|+|D^2u(y)|\right)\leq C\left(|u|_{0,\Omega}+|f|_{0,\alpha(\cdot), \Omega}\right).
\end{align*}
This completes the proof of Theorem \ref{schesend}.
\end{proof}

\section{Existence and uniqueness of solution}
In this section we prove the so-called Kellog's type theorem. Namely, we prove that the Dirichlet boundary value problem has a unique solution 
in the generalized H\"{o}lder spaces.
\begin{theo}[Existence and uniqueness of solution] \label{kellog}
Let $\Omega\subset\mathbb{R}^n$ be an open and bounded set with the boundary of class $C^{2,\alpha^+}$ and 
let us assume that $L$ is a strictly elliptic operator with 
coefficients in $C^{\alpha(\cdot)}(\bar{\Omega})$ and  $c\leq 0$. If $f\in C^{\alpha(\cdot)}(\bar{\Omega})$  and $\phi\in C^{2,\alpha(\cdot)}(\bar{\Omega})$, 
then the problem 
\begin{align}\label{probl}
\left\{\begin{array}{l}
L u=f \quad \text{in} \, \Omega,\\
u=\phi \quad \text{ on} \, \partial\Omega,
\end{array}\right.
\end{align}
 has a unique solution $u\in C^{2,\alpha(\cdot)}(\bar{\Omega})$.
\end{theo}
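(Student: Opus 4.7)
The plan is to reduce to homogeneous boundary data and then apply the method of continuity based on the global Schauder estimate. First, writing $u=v+\phi$ with $v$ the new unknown replaces $f$ by $f-L\phi\in C^{\alpha(\cdot)}(\bar{\Omega})$, so we may and do assume $\phi\equiv 0$. Uniqueness is then immediate: if $u_{1},u_{2}$ are two solutions then $w=u_{1}-u_{2}$ satisfies $Lw=0$ in $\Omega$ with zero boundary data, so the classical maximum principle (applicable because $c\le 0$) forces $w\equiv 0$.

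For existence I would employ the method of continuity on the family
\[
L_{t}=(1-t)\Delta+tL,\qquad t\in[0,1],
\]
regarded as bounded linear maps from $\mathcal{B}_{1}=\{u\in C^{2,\alpha(\cdot)}(\bar{\Omega}):u|_{\partial\Omega}=0\}$ to $\mathcal{B}_{2}=C^{\alpha(\cdot)}(\bar{\Omega})$. Each $L_{t}$ is strictly elliptic, has coefficients in $C^{\alpha(\cdot)}(\bar{\Omega})$ with norms bounded uniformly in $t$, and has zero-order coefficient $c_{t}=tc\le 0$. Theorem~\ref{schesend}, together with the standard maximum-principle bound $|u|_{0,\Omega}\le C|L_{t}u|_{0,\Omega}$ (valid because $c_{t}\le 0$), yields the uniform a priori estimate
\[
|u|_{2,\alpha(\cdot),\Omega}\leq C\,|L_{t}u|_{0,\alpha(\cdot),\Omega},\qquad u\in\mathcal{B}_{1},
\]
with $C$ independent of $t\in[0,1]$. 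By the standard method of continuity, existence for $L_{1}=L$ then reduces to surjectivity of $L_{0}=\Delta\colon\mathcal{B}_{1}\to\mathcal{B}_{2}$.

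The main obstacle is therefore the production of a solution $u\in C^{2,\alpha(\cdot)}(\bar{\Omega})$ of $\Delta u=f$, $u|_{\partial\Omega}=0$, for arbitrary $f\in C^{\alpha(\cdot)}(\bar{\Omega})$. Since $C^{\alpha(\cdot)}(\bar{\Omega})\subset C^{\alpha^{-}}(\bar{\Omega})$ and $\partial\Omega$ is of class $C^{2,\alpha^{+}}\subset C^{2,\alpha^{-}}$, the classical constant-exponent Kellog theorem provides a unique $u\in C^{2,\alpha^{-}}(\bar{\Omega})\subset C^{2}(\bar{\Omega})$ solving the problem. To upgrade this regularity to $C^{2,\alpha(\cdot)}(\bar{\Omega})$ I would apply Theorem~\ref{twlap} on each ball compactly contained in $\Omega$, and, after locally straightening the boundary by a $C^{2,\alpha^{+}}$ chart exactly as in the proof of Lemma~\ref{schesbrzex}, Theorem~\ref{laplthbr} on each boundary ball; crucially, neither of these a priori estimates presupposes variable-exponent regularity of $u$, only $u\in C^{2}$. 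A finite-covering argument in the spirit of the one closing the proof of Theorem~\ref{sches} then converts these local weighted estimates into the global bound $[D^{2}u]_{0,\alpha(\cdot),\Omega}<\infty$, yielding $u\in C^{2,\alpha(\cdot)}(\bar{\Omega})$. Hence $L_{0}$ is surjective and the method of continuity finishes the proof.
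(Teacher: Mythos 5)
Your reduction to $\phi\equiv 0$, the uniqueness argument via the maximum principle, and the method-of-continuity framework all match the paper's; the divergence is in how you produce a solution for $L_{0}=\Delta$, and there the boundary part of your regularity bootstrap has a genuine gap. After straightening a boundary piece by a $C^{2,\alpha^{+}}$ chart $\Phi$, the transformed function $\tilde u=u\circ\Phi^{-1}$ satisfies $\tilde a^{ij}D_{ij}\tilde u+\tilde b^{i}D_{i}\tilde u=\tilde f$ with variable coefficients, which is \emph{not} a Poisson equation, so Theorem~\ref{laplthbr} (stated only for $\Delta\tilde u=\tilde f$) does not apply. The variable-coefficient boundary estimates that would have to replace it (Theorem~\ref{schesbrz}, Lemma~\ref{schesbrzex}, via Lemma~\ref{lemsh} and the interpolation Lemma~\ref{intlem}) are genuine a priori estimates: they \emph{assume} $u\in C^{2,\alpha(\cdot)}(\bar\Omega)$, because the freezing-of-coefficients term $(a(x_{0})-a)D^{2}u$ is controlled in the $C^{\alpha(\cdot)}$ seminorm and then absorbed into the left-hand side, which requires $[D^{2}u]^{*}_{2,\alpha(\cdot)}$ to be finite to begin with. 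Starting from $u\in C^{2,\alpha^{-}}(\bar\Omega)$ only, you do not have that finiteness wherever $\alpha(x)>\alpha^{-}$, so the argument is circular near $\partial\Omega$. (Your interior step via Theorem~\ref{twlap} is fine; it is only the boundary that fails.)

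The paper avoids this circularity by approximation rather than bootstrap: it extends $f$ and $\alpha$ to $\Omega_{\sigma}$ (Lemma~\ref{extlem}), mollifies to obtain smooth $\bar f_{\epsilon_{m}}$ with $|\bar f_{\epsilon_{m}}|_{0,\bar\alpha(\cdot)-\delta_{m},\Omega}$ uniformly bounded (Lemma~\ref{wygl}), solves $\Delta u_{\epsilon_{m}}=\bar f_{\epsilon_{m}}$ by classical Schauder theory so that $u_{\epsilon_{m}}\in C^{2,\alpha^{+}-\delta_{m}}(\bar\Omega)$ — already smooth enough that the global a priori estimate of Theorem~\ref{schesend} with exponent $\alpha(\cdot)-\delta_{m}$ applies legitimately and uniformly in $m$ — then extracts a $C^{2}$-convergent subsequence by Arzel\`a--Ascoli and passes to the limit in the H\"older quotients to get $u\in C^{2,\alpha(\cdot)}(\bar\Omega)$. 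To keep your ``classical Kellog then upgrade'' route you would need a boundary regularity theorem for the Poisson equation in a curved $C^{2,\alpha^{+}}$ domain that outputs variable-exponent regularity directly without presupposing it; the straightening-plus-\ref{laplthbr} chain you invoke does not deliver that.
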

Let $\Omega_{\sigma}=\{x\colon \hbox{dist}(x,\Omega)<\sigma\}$ be a $\sigma$-neighborhood of the set $\Omega$. We first prove the lemma about 
the existence of extension operator in the variable H\"{o}lder spaces.  
\begin{lem}\label{extlem}
Let $\Omega\subset\mathbb{R}^n$ be an open and bounded set with the boundary of class $C^2$ and let assume that $\alpha\in\mathcal{A}^{log}(\Omega)$. 
Then, there exists $\sigma>0$ such that there exists $\bar{\alpha}\in\mathcal{A}^{log}({\Omega}_{\sigma})$  such that $\bar{\alpha}|_{\Omega}=\alpha$, $\bar{\alpha}^+=\alpha^+$ ,
 $\bar{\alpha}^-=\alpha^-$ and for any $f\in C^{\alpha({\cdot})}(\bar{\Omega})$, there exists 
$\bar{f}\in C^{\bar{\alpha}(\cdot)}(\bar{\Omega}_{\sigma})$ such that $\bar{f}|_{\Omega}=f$. Moreover, there exists a constant $C=C(\Omega, n, \alpha^-, \alpha^+, c_{\log}(\alpha))$ such that the inequality
\begin{align*}
|\bar{f}|_{0,\bar{\alpha}(\cdot),\Omega_{\sigma}}\leq C|f|_{0,\alpha(\cdot),\Omega}
\end{align*}
holds.
\end{lem}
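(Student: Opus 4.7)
The plan is to construct $\bar\alpha$ and $\bar f$ by reflection across $\partial\Omega$. Since $\partial\Omega$ is $C^2$, there exists $\sigma>0$ (smaller than the reach of $\partial\Omega$) such that every $x\in \Omega_\sigma\setminus \bar\Omega$ has a unique nearest boundary point $p(x)\in\partial\Omega$, and the map
$$
r(x)=2p(x)-x=p(x)-d(x,\partial\Omega)\,\nu(p(x))
$$
is a bi-Lipschitz $C^1$ diffeomorphism from $\Omega_\sigma\setminus\bar\Omega$ onto a subset of $\bar\Omega$, with some constant $K=K(\Omega)\geq 1$, and $r$ extends to the identity on $\partial\Omega$. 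I would then set
$$
\bar\alpha(x)=\begin{cases}\alpha(x),& x\in\bar\Omega,\\ \alpha(r(x)),& x\in\Omega_\sigma\setminus\bar\Omega,\end{cases}\qquad \bar f(x)=\begin{cases}f(x),& x\in\bar\Omega,\\ f(r(x)),& x\in\Omega_\sigma\setminus\bar\Omega.\end{cases}
$$
The identities $\bar\alpha^\pm=\alpha^\pm$ and $\sup_{\Omega_\sigma}|\bar f|=\sup_\Omega|f|$ are immediate, and continuity across $\partial\Omega$ holds because $r|_{\partial\Omega}=\mathrm{id}$.

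Next I would verify $\bar\alpha\in\mathcal{A}^{\log}(\Omega_\sigma)$ by cases. The case $x,y\in\bar\Omega$ is the hypothesis. For $x,y\in\Omega_\sigma\setminus\bar\Omega$ the bi-Lipschitz bound $K^{-1}|x-y|\leq|r(x)-r(y)|\leq K|x-y|$ implies $\bigl|\ln|r(x)-r(y)|\bigr|\leq\bigl|\ln|x-y|\bigr|+\ln K$, whence
$$
|\bar\alpha(x)-\bar\alpha(y)|\,\bigl|\ln|x-y|\bigr|\leq c_{\log}(\alpha)+2\alpha^+\ln K.
$$
The delicate case is $x\in\Omega_\sigma\setminus\bar\Omega$, $y\in\Omega$. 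Writing $x=p(x)+t\nu(p(x))$, one computes
$$
|x-y|^2-|r(x)-y|^2=4t\,\nu(p(x))\cdot(p(x)-y),
$$
and the $C^2$ regularity of $\partial\Omega$ gives $\nu(p(x))\cdot(p(x)-y)\geq -C_\Omega|p(x)-y|^2$; shrinking $\sigma$ if necessary, this forces $|r(x)-y|\leq 2|x-y|$ (and also $|r(x)-y|\geq \tfrac{1}{2}|x-y|$). Since $r(x),y\in\bar\Omega$, one then applies the log-Hölder property of $\alpha$ at $(r(x),y)$; values with $|x-y|\geq \tfrac{1}{2}$ are absorbed into the constant by boundedness of $\alpha$ and of $\Omega_\sigma$.

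Finally, the Hölder bound for $\bar f$ follows the same case analysis. For $x,y\in\bar\Omega$ it is the hypothesis. For $x,y\in\Omega_\sigma\setminus\bar\Omega$ the definition and the Lipschitz bound yield
$$
|\bar f(x)-\bar f(y)|\leq [f]_{\alpha(\cdot),\Omega}|r(x)-r(y)|^{\alpha(r(x))}\leq K^{\alpha^+}[f]_{\alpha(\cdot),\Omega}|x-y|^{\bar\alpha(x)}.
$$
In the mixed case, $|\bar f(x)-\bar f(y)|=|f(r(x))-f(y)|\leq[f]_{\alpha(\cdot),\Omega}|r(x)-y|^{\alpha(r(x))}\leq (2)^{\alpha^+}[f]_{\alpha(\cdot),\Omega}|x-y|^{\bar\alpha(x)}$ by the geometric comparison of the previous paragraph. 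Combining with the trivial sup-norm bound gives $|\bar f|_{0,\bar\alpha(\cdot),\Omega_\sigma}\leq C|f|_{0,\alpha(\cdot),\Omega}$. The main obstacle is the mixed case in both the log-Hölder and Hölder estimates, which is exactly where the geometric identity $|x-y|^2-|r(x)-y|^2=4t\,\nu\cdot(p-y)$ and the $C^2$ control of the boundary normal field are essential; the $C^2$ hypothesis on $\partial\Omega$ cannot be relaxed without a different construction.
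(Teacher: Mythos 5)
Your proof is correct and follows the same overall strategy as the paper: reflect across $\partial\Omega$ inside a tubular neighbourhood guaranteed by the $C^2$ regularity, define $\bar\alpha$ and $\bar f$ by composition with the reflection, and verify the log-H\"older and H\"older bounds by cases.

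The one place where you genuinely diverge is the mixed case $x\in\Omega_\sigma\setminus\bar\Omega$, $y\in\bar\Omega$. The paper handles it by picking an intermediate point $z\in[x,y]\cap\partial\Omega$ and distributing the estimate via a triangle-inequality split (using $|x-z|+|z-y|=|x-y|$, the monotonicity $-\ln|x-z|\ge -\ln|x-y|$, and a ratio factor $|x-y|^{\bar\alpha(y)-\bar\alpha(x)}$ controlled by log-H\"olderness). You instead prove the two-sided comparison $\tfrac12|x-y|\le|r(x)-y|\le 2|x-y|$ directly from the algebraic identity $|x-y|^2-|r(x)-y|^2=4t\,\nu(p(x))\cdot(p(x)-y)$ together with the standard $C^2$ paraboloid estimate $\nu(p)\cdot(p-y)\ge -C_\Omega|p-y|^2$ for $y\in\bar\Omega$ (and the observation $t\le|x-y|$, which absorbs $|p(x)-y|\lesssim|x-y|$). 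This shows the glued map (identity inside, reflection outside) is globally bi-Lipschitz $\Omega_\sigma\to\bar\Omega$, so the log-H\"older property of $\bar\alpha$ and the H\"older bound on $\bar f$ both follow at once by composition, with no need to pass through an intermediate boundary point. Your route is a little more geometric and more unified; the paper's route trades the second-order boundary estimate for some elementary bookkeeping with the point $z$. Both are sound, and the constant you get has the same dependence. One small stylistic note: your threshold $|x-y|\ge\tfrac12$ is not actually needed for the mixed case once you use $\bigl|\ln|x-y|\bigr|\le\bigl|\ln|r(x)-y|\bigr|+\ln 2$, which follows from the two-sided comparison; the bound $|\bar\alpha(x)-\bar\alpha(y)|\bigl|\ln|x-y|\bigr|\le c_{\log}(\alpha)+2\alpha^+\ln 2$ then holds for all $x\ne y$ with both on opposite sides.
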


\begin{proof} From the tubular neighborhood theorem (see \cite{tubu}) there exists $\sigma>0$ such that for 
$x\in\left(\partial\Omega\right)_{{\sigma}}=\{z\in\mathbb{R}^n\colon\hbox{dist}(z,\partial\Omega)<\sigma\}$ (see Fig. 1), 
there exist unique $d>0$ and $x_0\in\partial\Omega$ such that $x=x_0\pm dn(x_0)$, where $n(x_0)$ is an exterior unit normal vector. 

\centerline{\epsfxsize=10cm \epsfbox{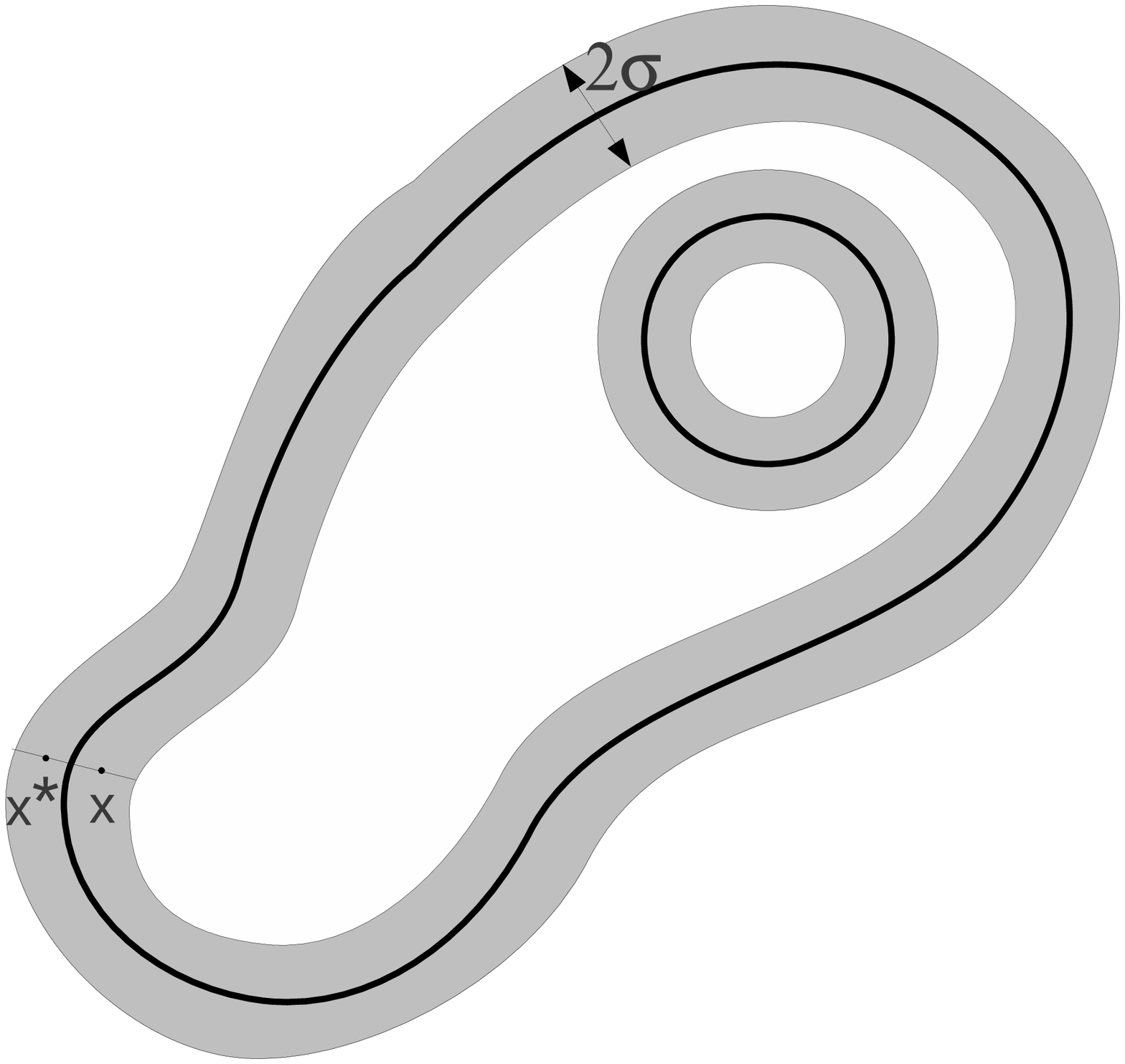}} %f1.eps}}
\centerline{Fig. 1}
We define the map
\begin{align*}
*: \left(\partial\Omega\right)_{{\sigma}} \rightarrow \left(\partial\Omega\right)_{{\sigma}}
\end{align*}
 in the following way 
$x^*= x_0+dn(x_0)$, if $x=x_0-dn(x_0)$ and as $x^*=x_0-dn(x_0)$, if $x=x_0+dn(x_0)$. 
Subsequently, we show that the map $*$ is bilipschitz, in other words, there exists constant $K>0$ that 
\begin{align}\label{lipodb}
K^{-1}|x-y|\leq|x^*-y^*|\leq K|x-y|.
\end{align}
Let $x,y\in\left(\partial\Omega\right)_{{\sigma}}$, then there exist $x_0, y_0\in\partial\Omega$ and $d_1, d_2 \in [0, {\sigma}]$ 
such that $x=x_0\pm d_1n(x_0)$ and $y=y_0\pm d_2n(y_0)$. For simplicity we assume that $x=x_0-d_1n(x_0)$ and $y=y_0-d_2n(y_0)$. The case when 
$x=x_0+d_1n(x_0), y=y_0+d_2n(y_0)$ can be treated in the same manner. We see that $x^*=x_0+d_1n(x_0)$ and $y^*=y_0+d_2n(y_0)$. 
Since the boundary of $\Omega$ is smooth, there exists a constant $L>0$ such that $|n(x_0)-n(y_0)|\leq L|x_0-y_0|$ for all $x_0,y_0\in\partial\Omega$. 
From \cite{foote} we also know, that the function $P$ that assign to the every point $z\in(\partial\Omega)_{{\sigma}}$ its projection on $\partial\Omega$ 
is of $C^1$-class. Thus, since $\left(\partial \Omega\right)_{\sigma}$ is relatively compact, the map $P$ is Lipschitz, i.e. $|P(z)-P(v)|\leq M|z-v|$ for all $z,v\in\left(\partial\Omega\right)_{{\sigma}}$. Thus, we get the inequality
\begin{align*}
|x^*-y^*|=|x_0+d_1n(x_0)-y_0-d_2n(y_0)|\leq |x_0-y_0|+d_1|n(x_0)-n(y_0)|+|d_1-d_2|\leq \\ \left(1+\sigma L\right)|x_0+y_0|+|x-y| + |x_0-y_0|\leq
C|x-y|.
\end{align*}
Next, let us assume that $x=x_0+d_1n(x_0), y=y_0-d_2n(y_0)$. We see that $x\notin\Omega$ and $y\in\Omega$. Let $z\in [x,y] \cap \partial\Omega$,
 then we have $|x-x_0|\leq|x-z|$, $|y-y_0|\leq|y-z|$ and $|x-z|+|z-y|=|x-y|$. These give
\begin{align*}
|x^*-y^*|=|x_0-d_1n(x_0)-y_0-d_2n(y_0)|\\ 
\leq|x_0-y_0|+d_1+d_2=|x_0-y_0|+|x-x_0|+|y-y_0|\leq\\ M|x-y|+|x-z|+|y-z|\leq
(1+M)|x-y|.
\end{align*}
In similar way one can show, that $K^{-1}|x-y|\leq|x^*-y^*|$.

Next, let us define $\bar{f}$ and $\bar{\alpha}$ on the set $\Omega_{\sigma}$ as follows
\begin{align*}
\bar{f}(x)=\left\{ 
\begin{array}{cl}
f(x),&\textrm{ if $x\in\bar{\Omega}$}\\
f(x^*),&\textrm{ if $x\in\Omega_{\sigma} \setminus \bar{\Omega}$}
\end{array}\right.&&
\bar{\alpha}(x)=\left\{ 
\begin{array}{cl}
\alpha(x),&\textrm{ if $x\in\bar{\Omega}$}\\
\alpha(x^*),&\textrm{ if $x\in\Omega_{\sigma} \setminus \bar{\Omega}$}
\end{array}\right..
\end{align*}
First, we show that $\bar{\alpha}$ is log-H\"older continuous. Let us assume that $|x-y|\leq \frac{1}{2}$. Then, we have two cases
\begin{enumerate}
\item $x,y\in \Omega_{\sigma}\setminus \Omega$.
Then, we have the following chain of inequalities 
\begin{align*}
\begin{gathered}
\left|\ln\left|x-y\right|\right|\left|\bar{\alpha}(x)-\bar{\alpha}(y)\right|=-\ln\left|x-y\right|\left|\bar{\alpha}(x)-\bar{\alpha}(y)\right|=\\-\ln\left|x-y\right|\left|\alpha(x^*)-{\alpha}(y^*)\right|\leq-\ln \left(K^{-1}\left|x^*-y^*\right|\right)\left|\alpha(x^*)-{\alpha}(y^*)\right|\\
=-\ln \left|x^*-y^*\right|\left|\alpha(x^*)-{\alpha}(y^*)\right|+\ln K\left|\alpha(x^*)-{\alpha}(y^*)\right|\leq\\ \left|\ln \left|x^*-y^*\right|\right|\left|\alpha(x^*)-{\alpha}(y^*)\right|+\left|\ln K\right|2\alpha^+\leq c_{\log}(\alpha)+\left|\ln K\right|2\alpha^+ .
\end{gathered}
\end{align*}
\item $x\in\bar{\Omega}$ and $y\in \Omega_{\sigma}\setminus \bar{\Omega}$.
Let $z\in [x,y] \cap \partial\Omega$, then 
\begin{align*}
\left|\ln\left|x-y\right|\right|\left|\bar{\alpha}(x)-\bar{\alpha}(y)\right| \leq -\ln\left|x-y\right|\left(\left|\bar{\alpha}(x)-\bar{\alpha}(z)\right|+\left|\bar{\alpha}(z)-\bar{\alpha}(y)\right|\right)\\
\leq -\ln\left|x-z\right|\left|\bar{\alpha}(x)-\bar{\alpha}(z)\right|-\ln\left|z-y\right|\left|\bar{\alpha}(z)-\bar{\alpha}(y)\right|,
\end{align*}
and the RHS of the above expression can be easily estimated by the previous case.
\end{enumerate}
When $|x-y|\geq \frac{1}{2}$, then
\begin{align*}
\left|\ln\left|x-y\right|\right|\left|\bar{\alpha}(x)-\bar{\alpha}(y)\right|\leq\max\left\{\left|\ln\frac{1}{2}\right|,\left|\ln\left|\hbox{diam}\left(\Omega\right)_{\sigma}\right|\right|\right\}2\alpha^+.
\end{align*}
In this way we have showed that $\bar{\alpha}\in\mathcal{A}^{log}({\Omega}_{\sigma})$.

Finally, we show that $\bar{f}\in C^{\bar{\alpha}(\cdot)}(\bar{\Omega}_{\sigma})$.
It is easy to see that $\bar{f}$ is continuous. 
Let us assume that $|x-y|\leq \frac{1}{2}$. 
\begin{enumerate}
\item If $x,y\in\Omega_{\sigma}\setminus \Omega$, then thanks to inequality (\ref{lipodb}), we get
\begin{align*}
\frac{|\bar{f}(x)-\bar{f}(y)|}{|x-y|^{\bar{\alpha}(x)}}\leq K^{\alpha(x^*)}\frac{|\bar{f}(x)-\bar{f}(y)|}{|x^*-y^*|^{\bar{\alpha}(x)}}\leq C\frac{|{f}(x^*)-{f}(y^*)|}{|x^*-y^*|^{{\alpha}(x^*)}}.
\end{align*}
\item If $x\in\bar{\Omega}$ and $y\in \Omega_{\sigma}\setminus \bar{\Omega}$, we take $z \in [x,y] \cap \partial\Omega$, then
\begin{align*}
\frac{|\bar{f}(x)-\bar{f}(y)|}{|x-y|^{\bar{\alpha}(x)}}\leq 
\frac{|\bar{f}(x)-\bar{f}(z)|}{|x-y|^{\bar{\alpha}(x)}}+\frac{|\bar{f}(z)-\bar{f}(y)|}{|x-y|^{\bar{\alpha}(x)}}\leq \\
 \frac{|\bar{f}(x)-\bar{f}(z)|}{|x-z|^{\bar{\alpha}(x)}}+\frac{|\bar{f}(z)-\bar{f}(y)|}{|z-y|^{\bar{\alpha}(y)}}\frac{|z-y|^{\bar{\alpha}(y)}}{|x-y|^{\bar{\alpha}(x)}}\leq 
\frac{|\bar{f}(x)-\bar{f}(z)|}{|x-z|^{\bar{\alpha}(x)}}+\frac{|\bar{f}(z)-\bar{f}(y)|}{|z-y|^{\bar{\alpha}(y)}}{|x-y|^{\bar{\alpha}(y)-\bar{\alpha}(x)}},
\end{align*}
where the term $|x-y|^{\bar{\alpha}(y)-\bar{\alpha}(x)}$ is bounded since $\bar{\alpha}\in\mathcal{A}^{log}({\Omega}_{\sigma})$.
\end{enumerate}
If $|x-y|>\frac{1}{2}$, then
\begin{align*}
\frac{|\bar{f}(x)-\bar{f}(y)|}{|x-y|^{\bar{\alpha}(x)}}\leq 2 |\bar{f}|_{0,\Omega_{\sigma}}\leq2|f|_{0,\Omega}.
\end{align*}
According to above inequalities we have
\begin{align*}
 |\bar{f}|_{0,\bar{\alpha}(\cdot),\Omega_{\sigma}}\leq C|f|_{0,\alpha(\cdot),\Omega}.
\end{align*}
This completes the proof of Lemma \ref{extlem}.
\end{proof}

Let $\phi_{\epsilon}$ be the standard mollifier, i.e. $\phi_{\epsilon} \geq 0$, $\hbox{supp} \phi_{\epsilon} \subset B(0,\epsilon)$ and $\int_{\mathbb{R}^n} \phi_{\epsilon} (x) dx =1$. Then, if $f \in L^1_{loc}$, we define 
$f_{\epsilon}= \phi_{\epsilon} \star f$. Now, we are in position to formulate the following approximation lemma.

\begin{lem}\label{wygl}
Assume that $\Omega\subset\mathbb{R}^n$ is an open and bounded set and $\sigma>0$. Let us also 
assume, that $\alpha$ is log-H\"older continuous on $\Omega_{\sigma}$ and that $f\in C^{\alpha(\cdot)}(\bar{\Omega}_{\sigma})$, then 
for each $\delta \in (0, \sigma)$ there exists $\epsilon (\delta) >0$ such that for each $\epsilon \in (0, \epsilon (\delta))$, the inequality
\begin{equation*}
|f_{\epsilon}|_{0,\alpha(\cdot)-\delta,\Omega}\leq 3|f|_{0,\alpha(\cdot),\Omega_{\sigma}}
\end{equation*}
holds.
\end{lem}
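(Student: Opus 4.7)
The plan is to estimate the sup norm and the variable Hölder seminorm of $f_\epsilon$ separately, each by a multiple of $|f|_{0,\alpha(\cdot),\Omega_\sigma}$. The $L^\infty$ part is immediate: provided $\epsilon<\sigma$, the convolution $f_\epsilon(x)=\int\phi_\epsilon(z)f(x-z)\,dz$ is well defined on $\Omega$ (since every ball $B(x,\epsilon)$ with $x\in\Omega$ is contained in $\Omega_\sigma$), and Jensen's inequality gives $|f_\epsilon|_{0,\Omega}\leq |f|_{0,\Omega_\sigma}$. So the whole work reduces to showing $[f_\epsilon]_{0,\alpha(\cdot)-\delta,\Omega}\leq 2[f]_{0,\alpha(\cdot),\Omega_\sigma}$ for sufficiently small $\epsilon$.

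For $x\neq y$ in $\Omega$, I would move the difference inside the convolution and apply the pointwise seminorm bound at the shifted base point,
\[
|f(x-z)-f(y-z)|\leq [f]_{0,\alpha(\cdot),\Omega_\sigma}\,|x-y|^{\alpha(x-z)},
\]
which makes sense because $x-z,y-z\in\Omega_\sigma$ whenever $\epsilon<\sigma$. Dividing by $|x-y|^{\alpha(x)-\delta}$ produces the key quantity
\[
\frac{|f_\epsilon(x)-f_\epsilon(y)|}{|x-y|^{\alpha(x)-\delta}}\leq [f]_{0,\alpha(\cdot),\Omega_\sigma}\int \phi_\epsilon(z)\,|x-y|^{\alpha(x-z)-\alpha(x)+\delta}\,dz,
\]
and the entire game reduces to showing that the exponential factor is uniformly $\leq 1$.

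The main obstacle is that $\alpha(x-z)-\alpha(x)$ has no definite sign, so when it is negative and $|x-y|$ is small the factor $|x-y|^{\alpha(x-z)-\alpha(x)}$ blows up. This is precisely where the log-Hölder hypothesis is used: for $|z|\leq\epsilon<1$ one has $|\ln|z||\geq|\ln\epsilon|$, so $|\alpha(x-z)-\alpha(x)|\leq c_{\log}(\alpha)/|\ln\epsilon|$, and I would choose $\epsilon(\delta)$ small enough that this oscillation does not exceed $\delta$. Then in the regime $|x-y|\leq 1$ the combined exponent $\alpha(x-z)-\alpha(x)+\delta$ is nonnegative, so $|x-y|^{\alpha(x-z)-\alpha(x)+\delta}\leq 1$ and the integral is bounded by $1$. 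In the complementary regime $|x-y|>1$ I would abandon the mollification route entirely and use the trivial bound $|f_\epsilon(x)-f_\epsilon(y)|\leq 2|f|_{0,\Omega_\sigma}$ together with $|x-y|^{\alpha(x)-\delta}\geq 1$ (which tacitly requires $\delta<\alpha^-$).

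Combining the two regimes yields $[f_\epsilon]_{0,\alpha(\cdot)-\delta,\Omega}\leq 2|f|_{0,\alpha(\cdot),\Omega_\sigma}$; adding the sup norm estimate gives the factor $3$ stated in the lemma. The critical step, on which the whole argument hinges, is the passage from $|z|\leq\epsilon$ to a $\delta$-level control on the oscillation of $\alpha$ via the log-Hölder modulus; the "slack" $\delta$ in the target exponent is exactly what the log-Hölder condition can buy.
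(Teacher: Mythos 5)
Your proposal is correct and follows essentially the same route as the paper's proof: move the difference inside the convolution, rewrite the difference quotient with base point $x-z$, and control the correction factor $|x-y|^{\alpha(x-z)-\alpha(x)}$ by ensuring the oscillation of $\alpha$ over scale $\epsilon$ is below $\delta$, then split into the regimes $|x-y|\leq 1$ and $|x-y|>1$. The only cosmetic difference is that you invoke the log-H\"older modulus $c_{\log}(\alpha)/|\ln\epsilon|$ explicitly, while the paper appeals to uniform continuity of $\alpha$ (which is equivalent here since log-H\"older continuity on a bounded set gives exactly such a modulus); you also make explicit the tacit requirement $\delta<\alpha^-$ for the $|x-y|>1$ case, which the paper leaves implicit.
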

\begin{proof}
Let us assume that $x,y\in\Omega$ and $|x-y|\leq 1$, then 
\begin{align}\label{ilin1}
\frac{|f_{\epsilon}(x)-f_{\epsilon}(y)|}{|x-y|^{\alpha(x)}}\leq\int_{B(0,\epsilon)}\phi_{\epsilon}(z)\frac{|f(x-z)-f(y-z)|}{|x-y|^{\alpha(x)}}dz=\\
\int_{B(0,\epsilon)}\phi_{\epsilon}(z)\frac{|f(x-z)-f(y-z)|}{|(x-z)-(y-z)|^{\alpha(x-z)}}|x-y|^{\alpha(x-z)-\alpha(x)}dz. \nonumber
\end{align}
Since $\alpha$ is uniformly continuous, there exists $\epsilon>0$ such that
\begin{align*}
|\alpha(x)-\alpha(y)|<\delta\textrm{, if $|x-y|<\epsilon$}.
\end{align*}
For $z\in B(0,\epsilon)$ we have $\alpha(x-z)-\alpha(x) >-\delta$, thus since $|x-y|<1$, we conclude
\begin{equation*}
|x-y|^{\alpha(x-z)-\alpha(x)}\leq |x-y|^{-\delta}.
\end{equation*}
According to (\ref{ilin1}), it follows that
\begin{align*}
\frac{|f_{\epsilon}(x)-f_{\epsilon}(y)|}{|x-y|^{\alpha(x)}}\leq \int_{B(0,\epsilon)}\phi_{\epsilon}(z)\frac{|f(x-z)-f(y-z)|}{|(x-z)-(y-z)|^{\alpha(x-z)}}|x-y|^{\alpha(x-z)-\alpha(x)}dz\leq\\
|x-y|^{-\delta}[f]_{0,\alpha(\cdot),\Omega_{\sigma}}.
\end{align*}
Thus, we get
\begin{align*}
\frac{|f_{\epsilon}(x)-f_{\epsilon}(y)|}{|x-y|^{\alpha(x)-\delta}}\leq[f]_{0,\alpha(\cdot),\Omega_{\sigma}}.
\end{align*}
Moreover, it easy to see that
\begin{equation*}
|f_{\epsilon}|_{0,\Omega}\leq|f|_{0,\Omega_{\sigma}},
\end{equation*}
hence for $|x-y| \geq 1$ we obtain
\begin{equation*}
\frac{|f_{\epsilon}(x)-f_{\epsilon}(y)|}{|x-y|^{\alpha(x)-\delta}}\leq 2|f_{\epsilon}|_{0,\Omega}\leq 2 |f|_{0,\Omega_{\sigma}}.
\end{equation*}
Therefore, we get
\begin{equation*}
[f_{\epsilon}]_{0,\alpha(\cdot)-\delta,\Omega}\leq([f]_{0,\alpha(\cdot)\Omega_{\sigma}}+2|f|_{0,\Omega_{\sigma}}).
\end{equation*}
Finally, we conclude
\begin{equation*}
|f_{\epsilon}|_{0,\alpha(\cdot)-\delta,\Omega}\leq 3|f|_{0,\alpha(\cdot),\Omega_{\sigma}},
\end{equation*}
which completes the proof of Lemma \ref{wygl}.
\end{proof}

\begin{proof}[Proof of Theorem \ref{kellog}]
Since the equation is linear, we can assume that the boundary values are equal to zero.

First of all we consider a very special case of the operator $L$, namely we shall take $L=\Delta$, i.e. we consider the Poisson equation
\begin{align*}
\left\{\begin{array}{l}
\Delta u=f \quad \text{in} \,\Omega,\\
u=0 \quad\text{on} \,\partial\Omega.
\end{array}\right.
\end{align*}
By Lemma \ref{extlem}, we can extend $f$ and $\alpha$ on the set $\Omega_{\sigma}$ for some $\sigma>0$. We shall denote the extensions by $\bar{f}$ and $\bar{\alpha}$ respectively.
Next, by Lemma \ref{wygl} there exist sequences $\epsilon_m$ and $\delta_m$ such that both are monotone and convergent to $0$ and the inequality
\begin{align}\label{inex1}
|\bar{f}_{\epsilon_m}|_{0,\bar{\alpha}(\cdot)-\delta_m,\Omega}\leq 3|\bar{f}|_{0,\bar{\alpha}(\cdot),\Omega_{\sigma}}
\end{align}
is satisfied for all $m$. Functions $\bar{f}_{\epsilon_m}$ are smooth, thus in particular $\bar{f}_{\epsilon_m}\in C^{\alpha^+-\delta_m}(\bar{\Omega})$. 
Therefore, by the classical Schauder theory there exists $u_{\epsilon_m}\in C^{2,\alpha^+-\delta_m}(\bar{\Omega})$ 
such that $\Delta u_{\epsilon_m}=f_{\epsilon_m}$. Furthermore, by Theorem \ref{schesend} the following inequality holds
\begin{align*}
|u_{\epsilon_m}|_{2,\alpha(\cdot)-\delta_m,\Omega}\leq C|\bar{f}_{\epsilon_m}|_{0,\alpha(\cdot)-\delta_m,\Omega}.
\end{align*}
Consequently, gathering the above inequality with (\ref{inex1}) we have
\begin{align}\label{inex2}
|u_{\epsilon_m}|_{2,\alpha(\cdot)-\delta_m,\Omega}\leq C|\bar{f}_{\epsilon_m}|_{0,\alpha(\cdot)-\delta_m,\Omega} \leq C|\bar{f}|_{0,\bar{\alpha}(\cdot),\Omega_{\sigma}}\leq C|f|_{0,\alpha(\cdot),\Omega},
\end{align}
where the last inequality follows from the property of the extension operator $\bar{f}$. Since $\alpha^- >0$, we can assume 
that there exists $\gamma>0$ such that for all $m$ we have $\alpha^--\delta_m\geq \gamma$. Then, we have 
\begin{align*}
|u_{\epsilon_m}|_{2,\gamma,\Omega}\leq|u_{\epsilon_m}|_{2,\alpha(\cdot)-\delta_m,\Omega}\leq C|f|_{0,\alpha(\cdot),\Omega}.
\end{align*}
It yields that $u_{\epsilon_m}$ is bounded in the space $C^{2,\gamma}(\bar{\Omega})$. So, by virtue of the Arzela-Ascoli Theorem, 
we can conclude that there exists a subsequence, still denoted as $u_{\epsilon_m}$, and $u\in C^2(\Omega)$ such that
\begin{align*}
u_{\epsilon_m}\to u\textrm{ in }C^2(\Omega). 
\end{align*}
Letting $m\to\infty$ in equality $\Delta u_{\epsilon_m}=f_{\epsilon_m}$, we obtain
\begin{align*}
\Delta u=f.
\end{align*}
Moreover, by virtue of inequality (\ref{inex2}) there exists a constant $M$ such that for arbitrary $x,y\in\Omega$, $x\neq y$
\begin{align*}
\frac{|D^2u_{\epsilon_m}(x)-D^2u_{\epsilon_m}(y)|}{|x-y|^{\alpha(x)-\delta_m}}\leq M.
\end{align*}
Let now $m\to\infty$ in the above inequality, then we conclude $u\in C^{2,\alpha(\cdot)}(\bar{\Omega})$. This finishes the proof for $L= \Delta$. 

In order to consider the general case we apply the method of continuity. Let $L_1=L$ and $L_0=\Delta$ and $L_t=(1-t)L_0+tL_1$ for $t\in (0,1)$. By Theorem $\ref{schesend}$ we have the inequality
\begin{align}\label{crul1}
|u|_{2,\alpha(\cdot),\Omega}\leq C\left(|u|_{0,\Omega}+|L_tu|_{0,\alpha(\cdot),\Omega}\right),
\end{align}
for arbitrary $t\in [0,1]$ and $u\in C^{2,\alpha(\cdot)}(\Omega)$. Since $c \leq 0$, by the maximum principle we have that $|u|_{0,\Omega}\leq C|L_tu|_{0,\Omega}$. 
Thus, from (\ref{crul1}) we obtain that
\begin{align*}
|u|_{2,\alpha(\cdot),\Omega}\leq C|L_tu|_{0,\alpha(\cdot),\Omega}. 
\end{align*}
Finally, taking the appropriate Banach spaces in the method of continuity we get the existence and uniqueness of solution. This completes the proof of the theorem. 
\end{proof}
Let us close our discussion with the following example.
\begin{example}
Let $e^{-2}<\gamma<\zeta<1$ and $\Omega=\{x\in\mathbb{R}^n\colon \gamma<|x|<\zeta\}$. Moreover, let $ \alpha: \Omega \rightarrow (0,1]$ be a variable exponent defined as follows $\alpha(x)=|x|$. 
Define 
\begin{align*}
f(x)=\left(|x|-\gamma\right)^{|x|}.
\end{align*}
Then, the variable exponent is log-H\"{o}lder continuous. Indeed, we have 
\begin{align*}
|\alpha(x)-\alpha(y)||\ln|x-y||=||x|-|y|||\ln|x-y||\leq|x-y||\ln|x-y||\leq \sup_{\gamma<r<\zeta} |r\ln r|<\infty.
\end{align*}
One can check by direct computation that for $x,y \in \bar{\Omega}$, the following inequality holds
\begin{align*}
|f(x)-f(y)|\leq|x-y|^{|x|}.
\end{align*}
Thus, we obtain $f\in C^{\alpha(\cdot)}(\bar{\Omega})$ and by virtue of Theorem \ref{kellog} we get that the problem 
\begin{align}\label{przyklad}
\left\{\begin{array}{l}
\Delta u=f \quad \text{in} \, \Omega,\\
u=0 \quad \text{ on} \, \partial\Omega,
\end{array}\right.
\end{align}
has a unique solution $u\in C^{2,\alpha(\cdot)}(\bar{\Omega})$.

On the other hand, we have that $f\notin C^{\beta}(\bar{\Omega})$ for any $\beta\in (\gamma,\zeta]$. Indeed, let us fix $\beta \in (\gamma, \zeta]$ and $x_0=(\gamma, 0,..., 0)$, $x_n=(\gamma + \frac{\beta-\gamma}{2^n}, 0, ..., 0)$. 
Therefore, we get 
\begin{align*}
\frac{|f(x_0)-f(x_n)|}{|x_0-x_n|^{\beta}}\geq \left(\frac{\beta-\gamma}{2^n}\right)^{\frac{\gamma-\beta}{2}}\to \infty.
\end{align*}
Hence, for any $\beta \in (\gamma, \zeta]$ there are no solutions of equation (\ref{przyklad}) in the space $C^{2,\beta}(\bar{\Omega})$. 
\end{example}

\section{Appendix}
In this last section we present the interpolation theorems in the variable H\"older spaces. 
\begin{lem}\label{intlem}
Let $j, k=0, 1, 2, \ldots$ and let us suppose that $j+\beta^+<k+\alpha^-$. If $\alpha\equiv0$ or $0<\alpha^-\leq\alpha^+< 1$ 
and if $\beta\equiv 0 $ or $0<\beta^-\leq\beta^+<1$, then for  
$\epsilon>0$ there exists a constant $C=C(\epsilon, \alpha^-, \beta^+, j, k, n)$ such that for each $u\in C^{k,\alpha(\cdot)}(\Omega)$ the inequalities
\begin{eqnarray}
[u]^*_{j,\beta(\cdot),\Omega}\leq C|u|_{0,\Omega}+\epsilon[u]^*_{k,\alpha(\cdot),\Omega},\label{inttez1}\\ \label{inttez2} 
|u|^*_{j,\beta(\cdot),\Omega}\leq C|u|_{0,\Omega}+\epsilon[u]^*_{k,\alpha(\cdot),\Omega},
\end{eqnarray}
are satisfied.
\end{lem}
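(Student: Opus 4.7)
The plan is to follow the classical interpolation argument of Gilbarg--Trudinger (Lemma~6.35), carefully tracking the variable exponents via the log-H\"older condition. First I would establish (\ref{inttez1}) and then deduce (\ref{inttez2}) by summing over lower orders $m \leq j$, using (\ref{inttez1}) inductively to absorb each intermediate seminorm into the two extremes $|u|_{0,\Omega}$ and $[u]^*_{k,\alpha(\cdot),\Omega}$. By an iteration/chaining scheme, it then suffices to treat the ``consecutive'' cases (e.g.\ $j+1=k$ with $\alpha \equiv 0 \equiv \beta$, and the pure-H\"older case $j=k$ with $\beta^+ < \alpha^-$), and combine them via the hypothesis $j+\beta^+ < k+\alpha^-$.

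The heart of the argument is a local estimate on the ball $B_{x_0}=B(x_0, d_{x_0}/4) \subset \Omega$. Inside $B_{x_0}$ the weights $d_y^s$ are comparable to $d_{x_0}^s$ up to a factor of $2$, and--crucially--$d_y^{\alpha(y)}$ is comparable to $d_{x_0}^{\alpha(x_0)}$ up to a factor bounded by $e^{c_{\log}(\alpha)}$, exactly as in the argument at the end of Lemma~\ref{lem1}. This reduces the variable-exponent weighted interpolation on $B_{x_0}$ to a standard Gilbarg--Trudinger type inequality with constant exponents, and the global bound follows by taking the supremum over $x_0 \in \Omega$.

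For the consecutive $L^\infty$ step, I would use a finite-difference/Taylor argument of the form
\begin{equation*}
u(x_0 + \rho e) - u(x_0 - \rho e) = 2\rho\, D_e u(x_0) + \text{remainder of order } k,
\end{equation*}
with $\rho=\delta\, d_{x_0}$ and $\delta<1/2$, to obtain
\begin{equation*}
d_{x_0}\,|Du(x_0)| \leq \frac{C}{\delta}\,|u|_{0,\Omega} + C\,\delta^{k-1+\alpha^-}\,[u]^*_{k,\alpha(\cdot),\Omega},
\end{equation*}
and then tune $\delta$ in terms of $\epsilon$. For the H\"older seminorm step I would split into the two regimes $|x-y| < d_{x,y}/2$ (use a Taylor expansion of $D^\beta u$ of order $k-j$ at $x$ and estimate the remainder by $[u]^*_{k,\alpha(\cdot),\Omega}$) and $|x-y| \geq d_{x,y}/2$ (the prefactor $d_{x,y}^{j+\beta(x)}/|x-y|^{\beta(x)}$ is at most $2^{\beta^+} d_{x,y}^j$, reducing this case to the $[u]^*_{j,0,\Omega}$ estimate already interpolated in the first step).

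The main obstacle I anticipate is bookkeeping with the variable exponents: Taylor remainders and the ``far'' case produce factors like $|x-y|^{k-j+\alpha(\cdot)-\beta(\cdot)}$ where the exponent is evaluated at shifting base points, and one must ensure these are bounded by a positive power uniformly in $x,y$. The strict assumption $j+\beta^+ < k+\alpha^-$ is precisely what makes the exponent of $|x-y|$ strictly positive after all cancellations, while log-H\"older continuity controls the discrepancy between $\alpha$ at nearby points through bounds of the form $|x-y|^{\alpha(x')-\alpha(x)} \leq e^{c_{\log}(\alpha)}$, as already exploited in Lemma~\ref{lem1}. Making these exponent estimates line up cleanly so that the final absorption argument goes through is the delicate part of the proof.
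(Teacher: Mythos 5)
Your scaffold is essentially the same as the paper's: estimate on a ball $B(x,\mu d_x)$, use a mean-value/Taylor argument for the sup-norm step, split into near and far cases for the H\"older seminorm step, and then tune the radius parameter $\mu$ to absorb the top seminorm. Where you diverge is in the handling of the variable exponent. You propose to freeze $\alpha$ locally, compare $d_y^{\alpha(y)}$ to $d_{x_0}^{\alpha(x_0)}$ via log-H\"older continuity, and flag the resulting bookkeeping as the delicate step. But the paper never freezes the exponent and never invokes log-H\"older continuity in this proof at all --- notice that the constant $C(\epsilon,\alpha^-,\beta^+,j,k,n)$ in the statement has no $c_{\log}(\alpha)$ dependence. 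This is possible because in the definition
$[u]^*_{k,\alpha(\cdot),\Omega}=\sup_{x,y}\,d_{x,y}^{\,k+\alpha(x)}\,|D^{\gamma}u(x)-D^{\gamma}u(y)|/|x-y|^{\alpha(x)}$
the exponent is anchored at the \emph{same} base point $x$ in both the weight and the difference quotient, so every local estimate can be carried out directly in $\alpha(x)$: one writes $|D_{ij}u(x)-D_{ij}u(z)|\leq d^{\alpha(x)}\sup_{y\in B}|D_{ij}u(x)-D_{ij}u(y)|/|x-y|^{\alpha(x)}$ in the sup-norm step and $|x-y|^{\alpha(x)-\beta(x)}\leq(\mu d_x)^{\alpha(x)-\beta(x)}$ in the near H\"older step, and the only uniformity required is $\alpha(x)-\beta(x)\geq\alpha^--\beta^+>0$; no comparison of $\alpha$ at different points ever arises. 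So the obstacle you anticipate is illusory in this setup, and your route, while it would work, is more laborious and would yield a weaker constant that degrades as $c_{\log}(\alpha)$ grows. A secondary, stylistic difference: you propose a direct order-$(k-j)$ Taylor expansion, whereas the paper proves the low-order cases (the constant-exponent case $j=1,k=2$ cited from Gilbarg--Trudinger, then $j=k=2$ with $\beta=0$, $j=0,k=1$ with $\alpha=0$, and $j=k=0$ with $\beta^+<\alpha^-$) and chains them, finishing the general case by induction; either organization is fine.
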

\begin{proof}
Inequalities (\ref{inttez1}) and (\ref{inttez2}) are obvious, when $j=\beta=0$. Since inequality (\ref{inttez2}) follows from 
inequality (\ref{inttez1}), we give the proof of (\ref{inttez1}). The proof of (\ref{inttez1}) for $\alpha=\beta=0$ and $j=1$ and $k=2$ 
\begin{equation}\label{inbf}
[u]^*_{1,\Omega} \leq C|u|_{0,\Omega}+\epsilon[u]^*_{2,\Omega}
\end{equation}
is given in \cite{trudinger}. We start with the case $j=k=2$, $\alpha^->0$ and $\beta=0$. 
Let us take $x\in\Omega$ and $\mu\leq\frac{1}{2}$ and define $d=\mu d_x$, $B=B(x,d)$. In addition, let $[x_1, x_2]$ be the interval of the 
length $2d$, parallel to axis $x_j$ such that $x= \frac{x_1 +x_2}{2}$. Then, by the Mean Value Theorem, there exists $z \in [x_1, x_2]$ such that
$$
|D_{ij}u(z)|=|\frac{D_i(x_1)-D_i(x_2)}{2d}|\leq\frac{1}{d}|D_iu|_{0,B}.
$$
Thus, we obtain
\begin{eqnarray}\label{intinp1}
\begin{gathered}
|D_{ij}u(x)|\leq|D_{ij}u(x)-D_{ij}u(z)|+|D_{ij}u(z)|\leq\\ d^{\alpha(x)}\sup_{y\in B}\frac{|D_{ij}u(x)-D_{ij}u(y)|}{|x-y|^{\alpha(x)}}+\frac{1}{d}|D_iu|_{0,B}\leq \\
d^{\alpha(x)}\sup_{y\in B}d_{x,y}^{-\alpha(x)-2}\sup_{y\in B}d_{x,y}^{\alpha(x)+2}\frac{|D_{ij}u(x)-D_{ij}u(y)|}{|x-y|^{\alpha(x)}}+\frac{1}{d}\sup_{y\in B}d_y^{-1}\sup_{y\in B}|d_yD_iu(y)|.
\end{gathered}
\end{eqnarray}
Since $d_{x,y},d_y> d_x-d=d_x-\mu d_x\geq\frac{1}{2}d_x$, we have
\begin{eqnarray*}
|D_{ij}u(x)|\leq \frac{\mu^{\alpha(x)}2^{\alpha(x)+2}}{d_x^2}\sup_{y\in B}d_{xy}^{\alpha(x)+2}\frac{|D_{ij}u(x)-D_{ij}u(y)|}{|x-y|^{\alpha(x)}}+\frac{2}{d_x^2\mu}\sup_{y\in B}|d_yD_iu(y)|,
\end{eqnarray*}
and finally it yields that 
\begin{equation}\label{intin1}
d_x^2|D_{ij}u(x)|\leq 8\mu^{\alpha^-}[u]^*_{2,\alpha(\cdot), \Omega}+\frac{2}{\mu}[u]_{1,\Omega}^*.
\end{equation}
Since $x\in\Omega$ and $\mu\leq\frac{1}{2}$ are arbitrary, we obtain the following inequality
\begin{equation}\label{intin2}
[u]_{2,\Omega}^*\leq \tilde{C}[u]_{1,\Omega}^*+\epsilon[u]^*_{2,\alpha(\cdot), \Omega}.
\end{equation}
Moreover, by the very similar consideration, the following inequality follows
\begin{equation*}
[u]_{1,\Omega}^*\leq C|u|_{0,\Omega}^*+\epsilon[u]^*_{1,\alpha(\cdot), \Omega}.
\end{equation*}
It is exactly inequality (\ref{inttez1}) with $j=k=1$ and $\beta=0$. Now, let us plug $\epsilon=\frac{1}{2\tilde{C}}$ in inequality (\ref{inbf}) and thus from 
(\ref{intin2}) we get 
$$
[u]_{2,\Omega}^*\leq \tilde{C}\left(D|u|_{0,\Omega}+\frac{1}{2\tilde{C}}[u]_{2,\Omega}^*\right)+\epsilon[u]^*_{2,\alpha(\cdot), \Omega}.
$$
It yields the inequality in the case $j=k=2$ and $\beta=0$. Subsequently, gathering (\ref{inbf}) with the above inequality we get inequality (\ref{inttez1}) 
when $j=1$, $k=2$ and $\beta=0$. 
Next, we prove (\ref{inttez1}) when $j=0$, $k=1$ and $\alpha=0$. Let us take $x,y\in\Omega$ such that $d_x\leq d_y$ and let $d,\mu$ and $B$ be such as before.
Let us assume that $y\in B$, then
\begin{eqnarray*}
d_{x,y}^{\beta(x)}\frac{|u(x)-u(y)|}{|x-y|^{\beta(x)}}\leq d_x^{\beta(x)}|x-y|^{1-\beta(x)}|Du|_{0,B}\leq\mu^{1-\beta(x)}d_x|Du|_{0,B}\\ \leq \mu^{1-\beta(x)}d_x\sup_{z\in B}d_z^{-1}[u]^*_{1,B}\leq 2\mu^{1-\beta(x)}[u]^*_{1,B}.
\end{eqnarray*}
If $y\notin B$, then
\begin{eqnarray}\label{prz}
d_x^{\beta(x)}\frac{|u(x)-u(y)|}{|x-y|^{\beta(x)}}\leq 2\mu^{-\beta(x)}|u|_{0,\Omega}.
\end{eqnarray}
Consequently, we have
\begin{eqnarray}\label{inttez4}
[u]_{0,\beta(\cdot)}\leq 2\mu^{-\beta^+}|u|_{0,\Omega}+2\mu^{1-\beta^+}[u]^*_1.
\end{eqnarray}
It finishes the proof of (\ref{inttez1}) in the case $j=0$, $k=1$ and $\alpha=0$. Next, combining (\ref{inbf}) with (\ref{inttez4}) 
we get our inequality for $j=0$ $k=2$, $\alpha=0$. Moreover, arguing in similar fashion we deduce (\ref{inttez1}) for $j=1$, $k=2$ and $\alpha=0$.

Finally, we prove the inequality in the case when $j=k=0$ and $\beta^+<\alpha^-$.
If $y\in B$,
\begin{eqnarray*}
d_x^{\beta(x)}\frac{|u(x)-u(y)|}{|x-y|^{\beta(x)}}=d_x^{\beta(x)}|x-y|^{\alpha(x)-\beta(x)}\frac{|u(x)-u(y)|}{|x-y|^{\alpha(x)}}\leq \mu^{\alpha(x)-\beta(x)}d_x^{\alpha(x)}\frac{|u(x)-u(y)|}{|x-y|^{\alpha(x)}}.
\end{eqnarray*}
Next, if $y\notin B$
\begin{eqnarray*}
d_x^{\beta(x)}\frac{|u(x)-u(y)|}{|x-y|^{\beta(x)}}\leq 2\mu^{-\beta(x)}|u|_{0,\Omega}.
\end{eqnarray*}
Gathering the above inequalities, it may be concluded that 
\begin{eqnarray*}
[u]^*_{0,\beta(\cdot),\Omega}\leq 2\mu^{-\beta^+}|u|_{0,\Omega}+\mu^{\alpha^--\beta^+}[u]_{0, \alpha(\cdot), \Omega}.
\end{eqnarray*}
Thus, we proved the lemma in the cases when $0\leq j\leq k\leq 2$. The general case can be proved by induction.
\end{proof}
 
\begin{lem}\label{intlembrz}
Let $\Omega$ be an open subset of $\mathbb{R}^n_+$ with boundary portion T on $x_n=0$. 
Assume that $j, k=0, 1, 2,\ \ldots$ and let us suppose that $j+\beta^+<k+\alpha^-$. 
If $\alpha\equiv0$ or $0<\alpha^-\leq\alpha^+< 1$ 
and if $\beta\equiv 0 $ or $0<\beta^-\leq\beta^+<1$, then for  
$\epsilon>0$, there exists a constant $C=C(\epsilon, \alpha^-, \beta^+, j, k, n)$ such that for each  $u\in C^{k,\alpha(\cdot)}(\Omega\cup T)$
the following estimates hold
\begin{eqnarray}
[u]^*_{j,\beta(\cdot),\Omega\cup T}\leq C|u|_{0,\Omega}+\epsilon[u]^*_{k,\alpha(\cdot),\Omega\cup T},\label{inttezbrz1}\\ \label{inttezbrz2} 
|u|^*_{j,\beta(\cdot),\Omega\cup T}\leq C|u|_{0,\Omega}+\epsilon[u]^*_{k,\alpha(\cdot),\Omega\cup T}.
\end{eqnarray}
\end{lem}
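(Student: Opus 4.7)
The plan is to reproduce the proof of Lemma \ref{intlem} line by line, replacing $d_x$ by $\bar{d}_x$ and $d_{x,y}$ by $\bar{d}_{x,y}$ everywhere. The only substantive new feature is that the auxiliary ball $B=B(x,\mu\bar d_x)$ can now cross $T$, i.e.\ extend into $\{x_n\le 0\}$, so the ``interior interval'' used in the Mean Value Theorem is no longer automatically contained in $\Omega$. This is easily fixed because $\bar d_x$ measures distance only to $\partial\Omega\setminus T$: any $y\in B\cap \Omega$ still obeys $\bar d_y\ge(1-\mu)\bar d_x\ge \tfrac12\bar d_x$, so the weight bookkeeping is unchanged — we only need to produce an interval lying inside $\Omega$.

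To produce such an interval, fix $x\in\Omega$ and $\mu\le\tfrac14$ (slightly smaller than in the interior case). For the estimate of $D_{ij}u(x)$ with $j\ne n$, use the centered interval $[x-de_j,\,x+de_j]$: it preserves the $x_n$-coordinate, hence stays in $\{x_n>0\}$, and lies within distance $d\le\bar d_x$ of $x$, hence avoids $\partial\Omega\setminus T$, so it is contained in $\Omega$. For $D_{nn}u(x)$ use the \emph{shifted} interval $I=[x,\,x+2de_n]$: since $x_n>0$ it stays in the half-space, and its points are within distance $2d\le\tfrac12\bar d_x$ of $x$, so again $I\subset\Omega$. On either interval every point $y$ satisfies $\bar d_y\ge\bar d_x-2d\ge\tfrac12\bar d_x$, which is all the proof of Lemma \ref{intlem} ever uses about the ball.

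With the interval selected as above, the Mean Value Theorem yields a $z\in I$ with $|D_{ij}u(z)|\le d^{-1}|D_iu|_{0,I}$, and the triangle inequality
\[
|D_{ij}u(x)|\le|D_{ij}u(x)-D_{ij}u(z)|+|D_{ij}u(z)|
\]
is then processed exactly as in the chain (\ref{intinp1})--(\ref{intin1}) of Lemma \ref{intlem}, giving
\[
\bar d_x^{\,2}|D_{ij}u(x)|\;\le\;8\mu^{\alpha^-}[u]^*_{2,\alpha(\cdot),\Omega\cup T}+\frac{2}{\mu}[u]^*_{1,\Omega\cup T},
\]
and hence the analogue of (\ref{intin2}). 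The remaining pieces of Lemma \ref{intlem} — the cases $j=k=1$ with $\beta=0$, $j=0,\,k=1$ with $\alpha=0$, the pure H\"older case $j=k=0$ with $\beta^+<\alpha^-$, and the final induction on $j,k$ — transfer verbatim: each of them rests on the dichotomy ``$y\in B$'' versus ``$y\notin B$'', neither of which requires $B$ to lie entirely inside $\Omega$; we only need the inequalities $\bar d_y,\bar d_{x,y}\ge\tfrac12\bar d_x$ on $B\cap\Omega$, which hold as noted. Combining everything yields (\ref{inttezbrz1}), and (\ref{inttezbrz2}) then follows from (\ref{inttezbrz1}) together with the case $j=0$ exactly as in Lemma \ref{intlem}.

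The only real obstacle is the MVT step for $D_{nn}u$: a centered interval in direction $e_n$ may stick out through $T$. This is resolved by the shifted-interval trick, which costs only harmless multiplicative constants of the form $2^{\alpha(x)}\le 2$ in the final estimates; all other steps are pure bookkeeping with $\bar d$ in place of $d$.
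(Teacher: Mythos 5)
Your proposal is correct and takes exactly the route the paper intends: the paper's own ``proof'' of this lemma simply says it is very similar to Lemma~\ref{intlem} and is left to the reader, which is what you carry out, replacing $d_x,d_{x,y}$ by $\bar d_x,\bar d_{x,y}$ throughout. You correctly identify the one genuine subtlety (the Mean Value Theorem interval for $D_{nn}u$ can stick through $T$ when $x_n$ is small) and resolve it with the one-sided interval $[x,x+2de_n]$, which lies in $B(x,\bar d_x)\cap\{x_n>0\}\subset\Omega$; the extra factors of $2^{\alpha(x)}\le 2$ are absorbed into constants. Two small points worth making explicit in a fully written-out version: first, the segment $[x,y]$ used in the $j=0,\ k=1,\ \alpha\equiv0$ case also lies in $\Omega$, because both endpoints have positive $x_n$-coordinate and the segment stays within $B(x,\bar d_x)$, which cannot meet $\partial\Omega\setminus T$; and second, the base interpolation inequality (\ref{inbf}) cited from Gilbarg--Trudinger should be replaced by its boundary analogue (which is available in the same reference). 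With those noted, the argument is complete.
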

\begin{proof}
Since the proof is very similar to the proof of the previous lemma, we left it to the reader.
\end{proof}
\subsection*{Acknowledgments}
 We wish to thank Ma{\l}gosia for drawing the picture. Authors also want to thank Katarzyna Bies and Pawe\l \ W\'ojcicki for reading preliminary version of this manuscript.

%\bibliography{biblio1}
\end{document}